\newcommand{\References}{references}
\newif\ifCustomTheorems
\newcommand\RedeclareMathOperator{%
  \@ifstar{\def\rmo@s{m}\rmo@redeclare}{\def\rmo@s{o}\rmo@redeclare}%
}
\newcommand\rmo@redeclare[2]{%
  \begingroup \escapechar\m@ne\xdef\@gtempa{{\string#1}}\endgroup
  \expandafter\@ifundefined\@gtempa
     {\@latex@error{\noexpand#1undefined}\@ehc}%
     \relax
  \expandafter\rmo@declmathop\rmo@s{#1}{#2}}
\newcommand\rmo@declmathop[3]{%
  \DeclareRobustCommand{#2}{\qopname\newmcodes@#1{#3}}%
}
\newcommand{\N}{\mathds{N}}
\newcommand{\R}{\mathds{R}}
\newcommand{\Rp}{\R_{\geq0}}
\newcommand{\Rpp}{\R_{>0}}
\newcommand{\Impl}{\Longrightarrow}
\newcommand{\sub}{\subseteq}
\newcommand{\fa}{\ \forall \, }
\newcommand{\ex}{\ \exists \, }
\newcommand{\To}{\longrightarrow}
\newcommand{\rbl}{\left (}
\newcommand{\rbr}{\right )}
\newcommand{\sbl}{\left [}
\newcommand{\sbr}{\right ]}
\newcommand{\bl}{\left |}
\newcommand{\br}{\right |}
\newcommand{\nl}{\left\|}
\newcommand{\nr}{\right\|}
\newcommand{\cbl}{\left\lbrace }
\newcommand{\cbr}{\right\rbrace }
\newcommand{\Abs}[1]{\bl #1 \br}
\newcommand{\Norm}[2][ ]{\nl #2 \nr_{#1}}
\newcommand{\SNorm}[1]{\Norm[\infty]{#1}}
\newcommand{\LNorm}[2][2]{\Norm[L^{#1}]{#2}}
\newcommand{\setdef}[2]{\cbl\ #1\ \left|\ \vphantom{#1} #2\ \right.\cbr}
\newcommand{\GL}{\text{GL}}
\newcommand{\cD}{\mathcal{D}}
\newcommand{\cK}{\mathcal{K}}
\newcommand{\cU}{\mathcal{U}}
\newcommand{\cF}{\mathcal{F}}
\newcommand{\cG}{\mathcal{G}}
\newcommand{\cN}{\mathcal{N}}
\newcommand{\cY}{\mathcal{Y}}
\DeclareMathOperator*{\rf}{ref}
\DeclareMathOperator*{\esssup}{ess\,sup}
\DeclareMathOperator*{\loc}{loc}
\newcommand{\e}{\varepsilon}
\renewcommand{\l}{\lambda}
\newcommand{\me}{\mathrm{e}}
\newcommand{\g}{\gamma}
\newcommand{\G}{\Gamma}
\newcommand{\s}{\sigma}
\newcommand{\con}{\mathcal{C}}
\RedeclareMathOperator*{\Im}{Im}
\RedeclareMathOperator*{\Re}{Re}
\renewcommand{\phi}{\varphi}
\renewcommand{\d}{\ \text{d}}
\newcommand{\dd}[2][ ]{\tfrac{\text{\normalfont d}#1}{\text{\normalfont d}#2}}
\DeclareMathOperator*{\im}{im}
    \newtheorem{definition}{Definition}[section]
    \theoremstyle{definition}
    \newtheorem{remark}[definition]{Remark}
    \newtheorem{algo}[definition]{Algorithm}
    \newtheorem{example}[definition]{Example}
    \theoremstyle{plain}
    \newtheorem{prop}[definition]{Proposition}
    \newtheorem{theorem}[definition]{Theorem}
    \newtheorem{lemma}[definition]{Lemma}
\definecolor{sectioncolor}{RGB}{0, 0, 0}
\begin{document}
\newcommand{\FeasibControls}{\cU_{T}(t^0,x)}
\newcommand{\ControlFunctions}[4]{\cU^{#4}_{#1}(M,#2,#3)}

\newcommand{\Controls}{\ControlFunctions{T}{t^0}{x^0}{\phi}}
\newcommand{\PhiControls}[3]{\ControlFunctions{#1}{#2}{#3}{\phi}}
\newcommand{\PhiTControls}[2]{\PhiControls{T}{#1}{#2}}
\newcommand{\FunControls}[1]{\ControlFunctions{T}{t^0}{x^0}{#1}}

\newcommand\funding[1]{\protect{\bfseries Funding:} #1}
\newcommand{\email}[1]{\protect\href{mailto:#1}{#1}}
\newcommand\keywordsname{Key words}
\newcommand\AMSname{AMS subject classifications}
\newenvironment{@abssec}[1]{%
     \if@twocolumn
       \section*{#1}%
     \else
       \vspace{.05in}\footnotesize
       \parindent .2in
         {\upshape\bfseries #1: }\ignorespaces 
     \fi}
     {\if@twocolumn\else\par\vspace{.1in}\fi}
\newenvironment{AMS}{\begin{@abssec}{\AMSname}}{\end{@abssec}}
\newenvironment{keywords}{\begin{@abssec}{\keywordsname}}{\end{@abssec}}

\title{Funnel MPC for nonlinear systems with relative degree one
\thanks{
\funding{D. Dennstädt gratefully thanks the Technische Universität Ilmenau and the Free State of Thuringia
for their financial support as part of the Thüringer Graduiertenförderung.
T. Berger and K. Worthmann acknowledge funding by the German Research Foundation (DFG; grants
WO~2056/12-1, BE~6263/3-1, project number 471539468).
K. Worthmann gratefully acknowledges funding by the German Research Foundation (DFG; grant WO~2056/6-1, project number 406141926). 
}}}
\author{Thomas Berger
        \thanks{Institut für Mathematik, Universit\"{a}t Paderborn, Warburger Stra\ss e~100, 33098~Paderborn, Germany
        (\email{thomas.berger@math.upb.de}). }
    \and Dario Dennstädt
        \thanks{Institut für Mathematik, Technische Universität Ilmenau, Weimarer Stra\ss e 25, 98693~Ilmenau, Germany
    (\email{dario.dennstaedt@tu-ilmenau.de},
     \email{achim.ilchmann@tu-ilmenau.de},
     \email{karl.worthmann@tu-ilmenau.de}).}
    \and Achim Ilchmann\footnotemark[3]
    \and Karl Worthmann\footnotemark[3]
}

\date{\today}
\maketitle


\begin{abstract}
\noindent
    We show that Funnel MPC, a novel Model Predictive Control (MPC) scheme, allows tracking of
    smooth reference signals with prescribed performance for nonlinear multi-input multi-output
    systems of relative degree one with stable internal dynamics. The optimal control problem solved
    in each iteration of Funnel MPC resembles the basic idea of penalty methods used in
    optimization. To this end, we present a new stage cost design to mimic the high-gain idea of
    (adaptive) funnel control. We rigorously show initial and recursive feasibility of Funnel MPC
    without imposing terminal conditions or other requirements like a sufficiently long prediction
    horizon.
\end{abstract}

\begin{keywords}
model predictive control, funnel control, reference tracking, nonlinear systems, initial
feasibility, recursive feasibility
\end{keywords}

\begin{AMS}
    34H05, 49J30, 93B45, 93C10
\end{AMS}



\section{Introduction}
Model Predictive Control~(MPC) is a well-established control technique which relies on the iterative
solution of Optimal Control Problems (OCPs), see the textbooks~\cite{grune2017nonlinear,
rawlings2017model}. Thanks to its applicability to multi-input multi-output nonlinear systems and
its ability to take control and state constraints directly into account, it is nowadays widely used
and has seen various applications; see e.g.~\cite{QinBadg03}.

A key property for applying MPC is recursive feasibility, meaning that solvability of the OCP at a
particular time instant automatically implies solvability of the OCP at the successor time instant.
Often, suitably designed terminal conditions (costs and constraints) are incorporated in the iteratively solved OCP 
to ensure recursive feasibility, see e.g.~\cite{rawlings2017model}
and the references therein. However, such (artificially introduced) terminal conditions complicate
the task of finding an initially-feasible solution by imposing additional state constraints. As a
consequence, the domain of the MPC feedback controller might become significantly smaller. An
alternative approach, which is based on so-called cost controllability~\cite{CoroGrun20}, is using a
sufficiently-long prediction horizon, see e.g. \cite{boccia2014stability} and the references
therein or \cite{EsteWort20} for an extension to continuous-time systems. It is worth to be noted
that both techniques become significantly more involved in the presence of time-varying state (or
output) constraints.

To overcome the outlined 
restrictions for a large system class, Funnel MPC (FMPC) was proposed
in~\cite{berger2019learningbased}, which allows for reference tracking such that the tracking error
evolves in a pre-specified, potentially time-varying performance funnel. To this end, output
constraints were incorporated in the~OCP. Then, both initial and recursive feasibility were
rigorously shown by using properties of the system class in consideration --~without imposing
additional terminal conditions and independent of the length of the prediction horizon. Moreover,
the range of applied control values and the overall performance were further improved by using a
``funnel-like'' stage cost, which penalises the tracking error and becomes infinite when approaching
the funnel boundary. 

In the present paper, we show that such funnel-inspired stage costs (slightly modified compared to its
predecessor proposed in~\cite{berger2019learningbased}) automatically ensure initial and recursive
feasibility for a class of nonlinear systems with relative degree one and, in a certain
sense, input-to-state stable internal dynamics without adding the (artificial) output constraints
used in~\cite{berger2019learningbased}. 
To this end, novel
optimization-based arguments are employed, which somehow resemble ideas underlying penalty methods.
We are convinced that, in principle, similar techniques may be used to extend the
presented analysis to systems with higher relative degree. This conjecture is substantiated by
numerical simulations, for which FMPC shows superior performance compared to both MPC with quadratic
stage cost and funnel control.

The novel stage cost used in FMPC is inspired by funnel control. The latter is a model-free
output-error feedback of high-gain type introduced in~2002 by~\cite{IlchRyan02b}, see also the
recent work~\cite{BergIlch21} for a comprehensive literature overview. The funnel controller is
adaptive, inherently robust and allows reference tracking for a fairly large class of systems solely
invoking structural assumptions, i.e.\ stable internal dynamics, known relative degree with a
sign-definite high-frequency gain matrix. Most importantly, tracking is achieved within a prescribed
funnel, that means a prescribed transient behaviour is guaranteed. The funnel controller proved
useful for tracking problems in various applications such as temperature control of chemical reactor
models~\cite{IlchTren04}, control of industrial servo-systems~\cite{Hack17}, underactuated multibody
systems~\cite{BergOtto19} and DC-link power flow control~\cite{SenfPaug14}. Since funnel control,
contrary to MPC, does not use a model of the system, the controller only reacts on the current
system state and cannot ``plan ahead''. This often results in high control values and a rapidly
changing control signal with peaks. Furthermore, the controller requires a high sampling rate to
stay feasible, see e.g.~\cite{berger2019learningbased}. In applications, this results in quite
demanding hardware requirements.

Instead of guaranteeing that the output signal always evolves within predefined boundaries, previous
results for reference tracking with MPC mostly focus on ensuring asymptotic stability of the
tracking error, see e.g.~\cite{aydiner2016periodic,kohler2019nonlinear}. These approaches usually
modify the optimization problem by adding terminal constraints. In~\cite{aydiner2016periodic}
and~\cite{kohler2019nonlinear} asymptotic stability of the tracking error is guaranteed by designing
terminal sets and terminal costs around a specific reference signal. A tracking MPC scheme without
such constraints is studied in~\cite{kohler2018nonlinear}. The theoretical results for this scheme
rely on utilizing a sufficiently long prediction horizon instead.
In order to ensure reference tracking in the presence of disturbances or dynamic uncertainties,
tube-based robust MPC schemes use tubes around the reference signal which always confine the actual
system output, see e.g.~\cite{mayne2005robust, limon2005robust, falugi2013getting, kohler2020computationally}.
These tubes encompass the uncertainties of the system and can usually not be arbitrarily chosen a
priori. By adding terminal costs, terminal sets and constraints to the optimization problem it is
ensured that the system output always evolves within these tubes.
In~\cite{yu2013tube,singh2017robust} complex nonlinear incremental Lyapunov functions and a
corresponding incrementally stabilizing feedback is calculated offline in order to ensure that the
control objective is satisfied.
For linear systems the tracking of a reference signal within constant bounds is studied
in~\cite{di2015reference}. This procedure relies on the calculation of robust control
invariant~(RCI) sets in order to ensure that state, input and performance constraints are met. An
extension of this approach which also accounts for external disturbances can be found
in~\cite{yuan2019bounded}. These RCI~sets, however, are not trivial to calculate for a given system
and the algorithm proposed in~\cite{di2015reference} may in general not terminate in finite time.
Barrier function based MPC~(see e.g.~\cite{WILLS20041415}) follows a similar idea as FMPC. This
approach also uses, as part of the cost function, a term which diverges to infinity for states
converging to the boundary of a given set. However, utilizing terminal conditions (costs and
constraints) remains necessary in order to ensure recursive feasibility and that constraints are
met. By using a different kind of cost function, FMPC can circumvent this disadvantage.

By combining ideas from funnel control with MPC, the resulting Funnel MPC allows tracking of
sufficiently smooth reference signals for nonlinear multi-input multi-output systems of relative
degree one within a prescribed performance funnel. FMPC circumvents the shortcomings of both
approaches and enables us to benefit from the best of both worlds: guaranteed feasibility (funnel
control), a (slightly) enlarged system class (regularity of the high gain matrix is sufficient), and
superior performance (MPC).


The present paper is organized as follows.
We start by formulating the considered control problem and the MPC~algorithm in
Section~\ref{Sec:Problem}. After presenting the considered system class and detailing our structural
assumptions, we present the main result of this paper. By using a ``funnel-like'' stage cost function, it
is possible to track a reference signal within a prescribed funnel with MPC and guarantee initial
and recursive feasibility for any prediction horizon and without any terminal or output
constraints. After presenting simulations and promising preliminary results of numerical experiments
on an extension of FMPC in Section~\ref{Sec:Simulations}, we carry out the proof of the main result
over several steps in Section~\ref{Sec:ProofMainResult}. Finally, conclusions are drawn in
Section~\ref{Sec:Conclusion}.\\

\noindent
\textbf{Notation:}
$\N$ and $\R$ denote natural and real numbers, respectively.
$\N_0:=\N\cup\{0\}$ and $\Rp:=[0,\infty)$.
$\Norm{\cdot}$~denotes a norm in $\R^n$. $\Norm{A}$ denotes the induced operator norm
$\Norm{A}:=\sup_{\Norm{x} = 1}\Norm{Ax}$ for $A\in\R^{n\times m}$.
$\GL_n(\R)$ is the group of invertible $\R^{n\times n}$ matrices.
$\con^p(V,\R^n)$ is the linear space of $p$-times continuously  differentiable
functions $f:V\to\R^n$, where $V\subset\R^m$ and $p\in\N_0\cup \{\infty\}$.
$\con(V,\R^n):=\con^0(V,\R^n)$.
On an interval $I\subset\R$,  $L^\infty(I,\R^n)$ denotes the space of measurable essentially bounded
functions $f: I\to\R^n$ with norm $\SNorm{f}:=\esssup_{t\in I}\Norm{f(t)}$,
$L^\infty_{\text{loc}}(I,\R^n)$ the space of locally bounded measurable functions, and $L^p(I,\R^n)$
the space of measurable $p$-integrable functions with norm $\LNorm[p]{\cdot}$ and with $p\in\N$.
Further, $W^{k,\infty}(I,\R^n)$ is the Sobolev space of all $k$-times weakly differentiable functions
$f:I\to\R^n$ such that $f,\dots, f^{(k)}\in L^{\infty}(I,\R^n)$.\\




\section{Problem formulation and structural assumptions}\label{Sec:Problem}

We consider control affine multi-input multi-output systems
\begin{equation}\label{eq:Sys}
    \begin{aligned}
        \dot{x}(t)  & = f(x(t)) + g(x(t)) u(t),\quad x(t^0)=x^0,\\
        y(t)        & = h(x(t)),
    \end{aligned}
\end{equation}
with~$t^0\in\Rp$, $x^0\in\R^n$, functions~$f\in \con^1(\R^n,\R^n)$, $g\in \con^1(\R^n,\R^{n\times
m})$, $h \in \con^2(\R^n,\R^m)$, and control input function $u \in L^\infty_{\loc}(\R_{\geq 0},
\R^m)$. Note that both output~$y$ and input~$u$ have the same dimension. Due to the fact that the
input~$u$ does not have to be continuous, we use the generalised notion of \textit{Carath\'{e}odory
solutions} for ordinary differential equations, i.e., a function $x:[t^0,\omega)\to\R^n$,
$\omega>t^0$, with $x(t^0)=x^0$ is a solution of~\eqref{eq:Sys}, if it is absolutely continuous and
satisfies the ODE in~\eqref{eq:Sys} for almost all $t\in [t^0,\omega)$. A (Carath\'{e}odory)
solution $x:[t^0,\omega)\to\R^n$ is global, if $\omega=\infty$ and $x$ is a solution
of~\eqref{eq:Sys} on $[t^0,T)$ for all $T>t^0$. A solution $x$ is said to be maximal, if it has no
right extension that is also a solution. Any maximal solution of~\eqref{eq:Sys} is called the
\textit{response} associated with $u$ and denoted by~$x(\cdot;t^0,x^0,u)$. The response is unique
since the right-hand side of~\eqref{eq:Sys} is locally Lipschitz in~$x$, cf.~\cite[\S~10,
Thm.~XX]{Walt98}.


\subsection{Control objective}

Our objective is to design a control strategy which allows reference tracking of a given reference
trajectory~$y_{\rf}\in W^{1,\infty}(\Rp,\R^{m})$ within pre-specified error bounds. To be more
precise, the tracking error ~$t\mapsto e(t):=y(t)-y_{\rf}(t)$ shall evolve within the prescribed
performance funnel
\begin{align*}
    \cF_\phi:= \setdef{(t,e)\in \Rp\times\R^{m}}{\phi(t)\Norm{e} < 1}.
\end{align*}
This funnel is determined by the choice of the function~$\phi$
belonging  to
\begin{align*}
    \cG:=\setdef
        {\phi\in W^{1,\infty}(\Rp,\R)}
        {
            \inf_{t\geq 0}\phi(t) > 0
        \!\!},
\end{align*}
see also Figure~\ref{Fig:funnel}.
 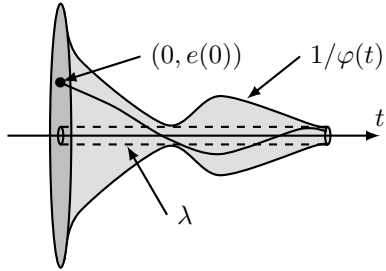
\begin{figure}[h]
  \begin{center}
\begin{tikzpicture}[scale=0.35]
\tikzset{>=latex}
  \filldraw[color=gray!25] plot[smooth] coordinates {(0.15,4.7)(0.7,2.9)(4,0.4)(6,1.5)(9.5,0.4)(10,0.333)(10.01,0.331)(10.041,0.3) (10.041,-0.3)(10.01,-0.331)(10,-0.333)(9.5,-0.4)(6,-1.5)(4,-0.4)(0.7,-2.9)(0.15,-4.7)};
  \draw[thick] plot[smooth] coordinates {(0.15,4.7)(0.7,2.9)(4,0.4)(6,1.5)(9.5,0.4)(10,0.333)(10.01,0.331)(10.041,0.3)};
  \draw[thick] plot[smooth] coordinates {(10.041,-0.3)(10.01,-0.331)(10,-0.333)(9.5,-0.4)(6,-1.5)(4,-0.4)(0.7,-2.9)(0.15,-4.7)};
  \draw[thick,fill=lightgray] (0,0) ellipse (0.4 and 5);
  \draw[thick] (0,0) ellipse (0.1 and 0.333);
  \draw[thick,fill=gray!25] (10.041,0) ellipse (0.1 and 0.333);
  \draw[thick] plot[smooth] coordinates {(0,2)(2,1.1)(4,-0.1)(6,-0.7)(9,0.25)(10,0.15)};
  \draw[thick,->] (-2,0)--(12,0) node[right,above]{\normalsize$t$};
  \draw[thick,dashed](0,0.333)--(10,0.333);
  \draw[thick,dashed](0,-0.333)--(10,-0.333);
  \node [black] at (0,2) {\textbullet};
  \draw[->,thick](4,-3)node[right]{\normalsize$\lambda$}--(2.5,-0.4);
  \draw[->,thick](3,3)node[right]{\normalsize$(0,e(0))$}--(0.07,2.07);
  \draw[->,thick](9,3)node[right]{\normalsize$1/\varphi(t)$}--(7,1.4);
\end{tikzpicture}
\end{center}
 \vspace*{-2mm}
 \caption{Error evolution in a funnel $\mathcal F_{\varphi}$ with boundary $1/\varphi(t)$.}
 \label{Fig:funnel}
 \end{figure}

Note that boundedness of $\varphi$ implies that there exists $\lambda>0$ such that
$1/\varphi(t)\geq\lambda$ for all $t \ge 0$. Therefore, signals evolving in $\mathcal{F}_{\varphi}$
are not forced to converge to $0$ asymptotically. To achieve that  the tracking error~$e$ remains
within~$\cF_\phi$, it is necessary that the solution~$x$ of the system~\eqref{eq:Sys} evolves within
the set
\[
    \cD^{\phi} := \setdef
                    { (t,x)\in \Rp \times \R^n}
                    {\phi(t) \Norm{h(x) - y_{\rf}(t)}<1}.
\]
To simplify notation we denote by~$\cD^{\phi}_t$ the second component of the set~$\cD^{\phi}$ at
time~$t\in\Rp$, meaning
\begin{align}\label{eq:Def-Dt}
    \cD^{\phi}_t := \setdef
                    {x\in\R^n}
                    {\phi(t) \Norm{h(x) - y_{\rf}(t)} <1}.
\end{align}
\begin{remark}
    In many practical applications perfect tracking is neither possible nor desired. Usually, the
    objective rather is to ensure the tracking error to be less than an (arbitrary small)
    prespecified constant after a prespecified period of time and to guarantee that the error does
    not exceed this bound at a later time.
    Tracking within a funnel, or in other words practical tracking, is advantageous since it allows
    tracking for system classes where asymptotic tracking is not possible or requires – when compared to
    asymptotic tracking – much less control effort.
    Note  that the function~$\varphi$ is a design parameter, thus its choice is completely up to the
    designer.
    Moreover, arbitrary funnel functions -- and not restricted to
    constant or monotonous decreasing funnels -- give the user more flexibility in
    finding a suitable trade-off between tracking performance and control effort. Typically, the
    specific application dictates the constraints on the tracking error and thus indicates suitable
    choices for $\varphi$. During safety critical system phases, the funnel will be small, while
    during non-critical phases the funnel can be widened again to reduce the control effort.
\end{remark}



\subsection{MPC with quadratic stage cost}

\noindent
The idea of Model Predictive Control~(MPC) is, after measuring/obtaining the state
$x(\widehat{t}) = \widehat{x} \in \R^{n}$ ($\widehat{t} \geq t^0$) at the current time~$\widehat{t}$, to repeatedly
calculate a control function~$u^{\star} = u^{\star}(\cdot;\widehat{t},\widehat{x})$  minimizing the
integral of a state cost~$\ell$ on the time interval $[\widehat{t},\widehat{t}+T]$ for $T>0$ and
implement the computed optimal solution~$u^{\star}$ to system~\eqref{eq:Sys} over an interval of
length $\delta<T$. $T$ and $\delta$ are called the prediction horizon and time shift, respectively.
It is clear that necessarily the solution ~$x(\cdot;\widehat{t},\widehat{x},u)$
of the system~\eqref{eq:Sys} exists on the whole interval~$[\widehat{t},\widehat{t}+T]$, i.e.,
$u^{\star}$ has to be an element of the set
\begin{equation*}\label{eq:DefFeasControls}
    \cU_{T}(\widehat{t},\widehat{x}) := \setdef
                      { u\in L^\infty([\widehat{t},\widehat{t}+T],\R^m) }
                      { x(t;\widehat{t},\widehat{x},u)\text{ satisfies~\eqref{eq:Sys} for all } t\in [\widehat{t},\widehat{t}+T] }.
\end{equation*}
When solving the problem of tracking a reference signal $y_{\rf}$, the \textit{stage cost}
\begin{equation}\label{eq:stageCostClassicalMPC}
    \begin{aligned}
        \ell:\Rp\times\R^n\times\R^{m}&\to\R,\qquad
        (t,x,u) \mapsto              \Norm{h(x)-y_{\rf}(t)}^2+\l_u \Norm{u}^2
    \end{aligned}
\end{equation}
with $\l_u>0$ is usually used. While the term $\Norm{h(x)-y_{\rf}(t)}^2$ penalises the distance of
the output $y=h(x)$ to the reference signal $y_{\rf}$, the term $\Norm{u}^2$ penalises the control
effort. The parameter~$\l_u$ allows to adjust a suitable trade-off between tracking performance and
required control effort. Of course, if a reference input signal~$u_{\rf}$ is known, the second
summand may be replaced by $\| u - u_{\rf}(t) \|^2$. To guarantee that the tracking error $e$
evolves within the prescribed funnel one adds the additional constraint
\begin{equation}\label{eq:ConstraintClassicalMPC}
    \fa t\in[\widehat{t},\widehat{t}+T]:\quad \phi(t)\Norm{y(t)-y_{\rf}(t)} \leq 1
\end{equation}
to the optimization problem, cp.~\cite{berger2019learningbased}. To ensure a bounded control signal,
one additionally adds the constraint $\Norm{u(t)}  \leq M$ for a predefined constant $M>0$.\\

\begin{algo}[MPC]\label{Algo:MPC}\ \\
    \textbf{Given:} System~\eqref{eq:Sys}, reference signal $y_{\rf}\in
    W^{1,\infty}(\Rp,\R^{m})$, funnel function $\phi\in\cG$, $M>0$, $t^0\in\Rp$,
    $x^0\in\cD^{\phi}_{t^0}$, and stage cost function~$\ell$ as in~\eqref{eq:stageCostClassicalMPC}.\\
    \textbf{Set} the time shift $\delta >0$, the prediction horizon $T\geq\delta$, and 
    the current time
        $\widehat t :=t^0$.\\
    \textbf{Steps:}
    \begin{enumerate}[(a)]
    \item\label{agostep:MPCFirst} Obtain a measurement of the state 
    at time~$\widehat t$ and set $\widehat x :=x(\widehat t)$.
    \item Compute a solution $u^{\star}\in L^\infty([\widehat t,\widehat t +T],\R^{m})$ of
    \begin{equation}\label{eq:MpcOCP}
    \begin{alignedat}{2}
            &\!\mathop
            {\operatorname{minimize}}_{u\in L^{\infty}([\widehat t,\widehat t+T],\R^{m})}  &\quad&
            \int_{\widehat t}^{\widehat t+T}\ell(t,x(t;\hat{t},\hat{x},u),u(t))\d t \\
            &\text{subject to} &       x(t;\hat{t},\hat{x},u)&\in\cD_t^\varphi,\\
            &                  &       \Norm{u(t)}  &\leq M.
    \end{alignedat}
    \end{equation}
    \item Apply the feedback law
        \[
            \mu:[\widehat t,\widehat t+\delta)\times\R^n\to\R^m, \quad \mu(t,\widehat x) =u^{\star}(t)
        \]
        to system~\eqref{eq:Sys}.
        Increase $\widehat t$ by $\delta$ and go to Step~\ref{agostep:MPCFirst}.
    \end{enumerate}
\end{algo}



\subsection{Drawbacks of the MPC scheme~\ref{Algo:MPC}}\label{Sec:DrawbackClassicMPCScheme}

\noindent Although utilizing the stage cost~$\ell$ in~\eqref{eq:stageCostClassicalMPC} and
constraints~\eqref{eq:ConstraintClassicalMPC} in Algorithm~\ref{Algo:MPC} might seem like a canonical
choice when solving the reference tracking problem with MPC, this approach has several drawbacks.
In particular, one has to guarantee initial and recursive feasibility of the MPC
Algorithm~\ref{Algo:MPC}. This means, it is necessary to prove that the optimization
problem~\eqref{eq:MpcOCP} has initially (i.e., at $t=t^0$) and recursively (i.e., at $t=t^0+\delta
n$ after $n$ steps of Algorithm~\ref{Algo:MPC}) a solution. First of all, one has to
show existence of an $L^\infty$-control $u$ bounded by $M>0$ which, if applied to a restricted
system class of~\eqref{eq:Sys}, guarantees that the tracking error~$e(t)=y(t)-y_{\rf}(t)$ evolves within the
performance funnel, i.e.,
\[
    \fa t\in[t^0,t^0+T]:\ \phi(t)\Norm{e(t)} = \phi(t)\Norm{y(t)-y_{\rf}(t)} < 1.
\]
Or, formulating it slightly different, one has to show that for $t^0\in\Rp$, $x^0\in\R^n$, $M>0$,
and $T>0$ the set
\begin{equation}\label{eq:DefUL2}
    \Controls{} := \setdef
                                {u\in \FeasibControls}
                                { \!\!
                                        \fa t\in[t^0,t^0+T]:\ x(t;t^0,x^0,u)\in\cD^{\phi}_t,\
                                        \SNorm{u}<M
                                \!\!\!}
\end{equation}
is non-empty. Note that, for $\Controls\neq\emptyset$, it is necessary that the initial error is
contained in the interior of the funnel, i.e., $x^0\in\cD^{\phi}_{t^0}$. Furthermore, one has to
show that there exists a solution~$u^{\star}$ of the optimization problem~\eqref{eq:MpcOCP} and this
solution is an element of~$\Controls{}$.

To show recursive feasibility, it is further necessary to prove that after applying a
solution~$u^{\star}$ of the optimal control problem~\eqref{eq:MpcOCP} at time $t=t^0+\delta n$ to
the system~\eqref{eq:Sys} the optimization problem is still well defined at the next time instant
$\hat{t}=t^0+\delta (n+1)$, i.e., the set~$\PhiTControls{\hat{t}}{\hat x}$ is non-empty, where $\hat
x$ is the state of the system at time $\hat{t}$. To guarantee this recursive feasibility of the MPC
scheme in consideration, a sufficiently long prediction horizon $T$ (see
e.\,g.~\cite{boccia2014stability}) or suitable terminal constraints (see
e.g.~\cite{rawlings2017model}) are usually required while initial feasibility (i.e.
$\Controls\neq\emptyset$) is assumed. Moreover, the time-varying (state/output)
constraints~\eqref{eq:ConstraintClassicalMPC} in the optimization problem~\eqref{eq:MpcOCP} pose an
additional challenge; both for the theoretical analysis and also from a numerical point of view.
\begin{remark}
 Note that for two functions $\phi,\psi\in\cG$ with $\psi(t)\geq\phi(t)$ for all $t\in[t^0,t^0+T]$, we have
    \[
        \FunControls{\psi}\subseteq \FunControls{\phi}.
    \]
\end{remark}

Before we show how to overcome these drawbacks by a new stage cost in
Section~\ref{Sec:NewStageCost}, we introduce the class of systems to which our approach is restricted.



\subsection{System class}
Throughout this work we assume that system~\eqref{eq:Sys} has known relative degree $r=1$, i.e.,
the \emph{high-frequency gain matrix}
\begin{equation}
    \G(x) := \rbl h'g\rbr(x) \in \GL_m(\R)\quad \fa x\in\R^n. \label{eq:Gamma}
\end{equation}
Additionally, we assume that $h^{-1}(0)$ is diffeomorphic to $\R^{n-m}$ and the
distribution\footnote{By a distribution, we mean a mapping from $\R^n$ to the set of all subspaces
of $\R^n$.} $x\mapsto \cG(x) := \im g(x)$ is involutive, i.e., for all smooth vector fields
$\psi_1,\psi_2:\R^n\to\R^n$ with $\psi_i(x)\in\cG(x)$ for all $x\in\R^n$ and $i=1,2$ we have that
the Lie bracket $[\psi_1, \psi_2](x) = \psi_1'(x) \psi_2(x) - \psi_2'(x)\psi_1(x)$ satisfies
$[\psi_1, \psi_2](x)\in\cG(x)$ for all $x\in\R^n$. Note that for single-input, single-output systems
(i.e., $m=1$) the distribution $\cG(x)$ is always involutive.
Then, by~\cite[Cor. 5.7]{ByrnIsid91a} there exists a diffeomorphism~$\Phi:\R^n\to\R^n$ such that
the coordinate transformation $(y(t),\eta(t)) = \Phi(x(t))$ puts the system into Byrnes-Isidori form
\begin{subequations}\label{eq:BIF}
    \begin{align}
        &\dot y(t) = p\rbl y(t),\eta(t)\rbr + \G\rbl \Phi^{-1}\rbl y(t),\eta(t)\rbr\rbr\,u(t),\quad\ (y(t^0),\eta(t^0)) = (y^0,\eta^0) = \Phi(x^0),
        \label{eq:output_dyn}\\
        &\dot \eta(t) = q\rbl y(t),\eta(t)\rbr,\label{eq:zero_dyn}
    \end{align}
\end{subequations}
where $p\in \con^1(\R^m\times\R^{n-m},\R^m)$ and $q\in \con^1(\R^m\times\R^{n-m},\R^{n-m})$.
Furthermore, we impose the following version of a \emph{bounded-input, bounded-state} (BIBS) condition on the
internal dynamics~\eqref{eq:zero_dyn}:
\begin{multline}\label{eq:BIBO-ID}
        \fa c_0 >0  \ex c_1 >0  \fa t^0\in\Rp \fa  \eta^0\in\R^{n-m}
        \fa  y\in L^\infty_{\loc}([t^0,\infty),\R^m):\\[2ex]
        \Norm{\eta^0}+
        \SNorm{y}  \leq c_0\ \Impl\ \SNorm{\eta (\cdot;t^0,\eta^0,y)} \leq c_1,
\end{multline}
where (here and throughout the paper)  $\eta (\cdot;t^0, \eta^0,y):[t^0,\infty)\to\R^{n-m}$ denotes
the unique global solution of~\eqref{eq:zero_dyn}. Here, the maximal solution $\eta (\cdot;t^0,
\eta^0,y)$ of~\eqref{eq:zero_dyn} can indeed be extended to a global solution since it is bounded by
the BIBS condition~\eqref{eq:BIBO-ID}, cf.~\cite[\S~10, Thm.~XX]{Walt98}.

\begin{remark}
    If a stabilizing state feedback $u=Fx$ is applied to a system of the form 
    \begin{subequations}
    \begin{align}
        \dot{x}(t)&=Ax(t)+Bu(t),\label{eq:SusmannPeakingLinearPart}\\
        \dot{\eta}(t)&=f(x(t),\eta(t))\label{eq:SusmannPeakingNonLinearPart}
    \end{align}
    \end{subequations}
    with controllable $(A,B)\in\mathds{R}^{n\times n}\times\mathds{R}^{n\times m}$ and
    continuously differentiable function $f:\mathds{R}^n\times\mathds{R}^{\ell}\to\mathds{R}^{\ell}$, then the
    linear part~\eqref{eq:SusmannPeakingLinearPart} can be estimated, for $t\geq0$ and $a,\kappa>0$, by $\|x(t)\|\leq\kappa
    e^{-at}\|x(0)\|$. Any prespecified~$a$ can be realized by the choice of~$F$.
    However, as stated by Sussmann and Kokotovic in~\cite{sussmann1991peaking}, \textit{one
    cannot, in general, choose $F$ so as to make the number $a$ large without making~$\kappa$ large as
    well}. As first pointed out by Sussmann in~\cite{sussmann1990limitations}, the so called
    \textit{peaking-phenomenon} can cause the nonlinear part~\eqref{eq:SusmannPeakingNonLinearPart} of
    the system to have finite escape time even if the system
    \[
        \dot{\eta}(t)=f(0,\eta(t))
    \]
    has $0$ as a global asymptotically stable equilibrium.
    The presumed BIBS condition~\eqref{eq:BIBO-ID} not only avoids this problem, but is even more
    essential since our control objective is to guarantee that the system output~$y$ evolves within
    the funnel around the reference signal~$y_{\text{ref}}$. Without this assumption and even with
    perfect tracking, the internal dynamics might~\eqref{eq:zero_dyn} be unbounded and thus cause an
    unbounded control effort, or worse, its solution might even have finite escape time.
\end{remark}

We summarize our assumptions and define the general system class to be considered.
\begin{definition}[System class]
    We say that the system~\eqref{eq:Sys} belongs to the \emph{system class}~$\cN^{m}$,
    written~$(f,g,h)\in\cN^{m}$, if it has global relative degree~$r=1$, $h^{-1}(0)$ is isomorphic
    to $\R^{n-m}$, the distribution $x\mapsto
    \im g(x)$ is involutive, and the system satisfies the~BIBS condition~\eqref{eq:BIBO-ID}.
\end{definition}

\begin{remark}
    Relaxing standard requirements in funnel-control (see e.g.~\cite{IlchRyan02b,BergIlch21}), the high-frequency
    gain matrix $\G(x)$ does not need to be sign-definite.
    We only require the much weaker assumption of~$\G(x)$ being invertible.
\end{remark}

We further emphasize that these structural assumptions are sufficient conditions for our results, but
they are not necessary. First promising preliminary simulation results show that Funnel MPC can also
successfully be applied to a more general system class (see Section~\ref{SubSec:MassOnCar}).



\subsection{Novel stage cost design}\label{Sec:NewStageCost}

\noindent
To overcome the drawbacks of the MPC scheme~\ref{Algo:MPC} outlined in Section~\ref{Sec:DrawbackClassicMPCScheme}, we propose for $\phi\in\cG$, $y_{\rf}\in W^{1,\infty}(\Rp,\R^{m})$, and  design parameter
${\l_u\in\Rp}$ the new \textit{stage cost function}
\begin{equation}\label{eq:stageCostFunnelMPC}
    \begin{aligned}
        \ell_{\phi}:\Rp\times\R^n\times\R^{m}&\to\R\cup\{\infty\},\\
        (t,x,u) &\mapsto
        \begin{dcases}
            \frac {1}{1-\phi(t)^2 \Norm{h(x)-y_{\rf}(t)}^2} -1 + \l_u \Norm{u}^2,
                & \phi(t)\Norm{h(x)-y_{\rf}(t)} \neq 1\\
            \infty,&\text{else},
        \end{dcases}
    \end{aligned}
\end{equation}
to be used in the MPC Algorithm~\ref{Algo:MPC} instead of $\ell$
from~\eqref{eq:stageCostClassicalMPC}. The term~$\frac {1}{1-\phi(t)^2 \Norm{h(x)-y_{\rf}(t)}^2}$
penalises the distance of the tracking error to the funnel boundary, whereas the parameter $\l_u$
again influences the penalization of the control input. Note that we allow for $\l_u=0$.

The cost function~$\ell_{\phi}$ is motivated by the following standard result on funnel control
from~\cite[Thm.~7]{IlchRyan02b}.

\begin{prop}\label{Prop:FC}
    Assume that $(f,g,h)\in\cN^{m}$, $\phi\in\cG$, $y_{\rf}\in W^{1,\infty}(\Rp,\R^{m})$,
    $t^0\in\Rp$, and $x^0\in\cD^{\phi}_{t^0}$. Further assume that the high-frequency gain matrix
    $\Gamma(x)$ as in~\eqref{eq:Gamma} is positive definite for all $x\in\R^n$. Then the application
    of the output feedback $u(t):=\mu_{\rm FC}(t,y(t))$ with
  \begin{equation}\label{eq:FC}
      \mu_{\rm FC}(t,y) = -k(t,y) e(t,y),\qquad
      k(t,y)          = \frac{1}{1-\phi(t)^2 \Norm{e(t,y)}^2},\qquad
      e(t,y)          = y - y_{\rf}(t)
  \end{equation}
  to~\eqref{eq:Sys} leads to the closed-loop initial value problem
  \begin{align*}
      \dot{x}(t) & = f(x(t)) - g(x(t))
                \frac{y(t) - y_{\rf}(t)}{1-\phi(t)^2\Norm{y(t) - y_{\rf}(t)}^2},\quad x(t^0)=x^0,\\
      y(t) & = h(x(t)),
  \end{align*}
    which has a solution,  every
    solution can be extended to a unique global solution $x:[t^0,\infty)\to\R^n$, and $x, u, y$ are bounded
    with essentially bounded weak derivatives. The tracking error evolves uniformly within the
    performance funnel, i.e.,
    \[
        \ex \e>0\fa t>0:\ \Norm{e(t)} \le \phi(t)^{-1} - \e.
    \]
\end{prop}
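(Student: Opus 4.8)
The plan is to run the classical high-gain feasibility argument for funnel control. First I would transform to Byrnes--Isidori form~\eqref{eq:BIF} via the diffeomorphism~$\Phi$ (available since the relative degree is one and $\im g$ is involutive) and write the closed loop in the coordinates $(e,\eta)$ with $e=y-y_{\rf}$: inserting $u=-k(t,y)e$, $k=(1-\phi^2\Norm{e}^2)^{-1}$, into~\eqref{eq:output_dyn}--\eqref{eq:zero_dyn} yields
\begin{equation*}
    \dot e = p(e+y_{\rf},\eta) - \dot y_{\rf} - k(t,e)\,\G\!\left(\Phi^{-1}(e+y_{\rf},\eta)\right) e,\qquad \dot\eta = q(e+y_{\rf},\eta),
\end{equation*}
posed on the open set $\mathcal{O}:=\setdef{(t,e,\eta)\in\Rp\times\R^m\times\R^{n-m}}{\phi(t)^2\Norm{e}^2<1}$. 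The right-hand side is measurable in $t$ (because $\dot y_{\rf}\in L^\infty$), locally Lipschitz in $(e,\eta)$, and locally integrably bounded, so by Carath\'{e}odory theory there is a unique maximal solution on some $[t^0,\omega)$ with $(t,e(t),\eta(t))\in\mathcal{O}$ throughout; the hypothesis $x^0\in\cD^{\phi}_{t^0}$ ensures the initial datum lies in $\mathcal{O}$.

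The heart of the proof is an a priori bound on $[t^0,\omega)$, and here I would exploit that membership in $\mathcal{O}$ already forces $\Norm{e(t)}<\phi(t)^{-1}\le(\inf_{s\ge0}\phi(s))^{-1}$, hence $y=e+y_{\rf}$ is bounded on $[t^0,\omega)$ by a constant independent of $\omega$. The BIBS condition~\eqref{eq:BIBO-ID} then bounds $\eta$ (again uniformly in $\omega$), so $x=\Phi^{-1}(y,\eta)$ stays in a fixed compact set $K$. On $K$, $p$ is bounded and $\G$ is uniformly coercive --~positive definiteness of $\G(x)$ makes the smallest eigenvalue of its symmetric part a positive continuous function, hence bounded below by some $\gamma_0>0$ on $K$, so that $\langle e,\G(x)e\rangle\ge\gamma_0\Norm{e}^2$ there~-- and therefore, with $\beta:=\phi^2\Norm{e}^2$, differentiating and using $k=(1-\beta)^{-1}$ gives a differential inequality of the form
\begin{equation*}
    \dot\beta \;\le\; c_1\beta + c_2\sqrt{\beta} - \frac{2\gamma_0\,\beta}{1-\beta}\qquad\text{for a.e. } t\in[t^0,\omega),
\end{equation*}
with constants depending only on $\SNorm{\phi}$, $\SNorm{\dot\phi}$, $\inf_{s\ge0}\phi(s)$, $K$, and $\SNorm{\dot y_{\rf}}$. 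Since the last term blows up as $\beta\uparrow1$, there is $\beta^\ast\in(0,1)$ with $\dot\beta<0$ whenever $\beta\ge\beta^\ast$; a standard first-exit argument then gives $\beta(t)\le\bar\beta:=\max\{\beta(t^0),\beta^\ast\}<1$ on $[t^0,\omega)$. This already yields the funnel estimate with $\e:=(1-\sqrt{\bar\beta})\,\SNorm{\phi}^{-1}>0$, namely $\Norm{e(t)}\le\sqrt{\bar\beta}\,\phi(t)^{-1}\le\phi(t)^{-1}-\e$.

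It remains to close the loop. From $\beta\le\bar\beta<1$ we get $k\le(1-\bar\beta)^{-1}$, so $u=-ke$ is bounded and $\dot e,\dot\eta$ are bounded a.e.; thus $(e,\eta)$ is uniformly continuous on $[t^0,\omega)$ and, if $\omega<\infty$, extends continuously to $t=\omega$ with $\beta(\omega)\le\bar\beta<1$, i.e.\ to a point of $\mathcal{O}$, contradicting maximality, so $\omega=\infty$. Consequently $y=e+y_{\rf}$, $x=\Phi^{-1}(y,\eta)$ and $u=-ke$ are bounded; since $t\mapsto k(t,e(t))=(1-\beta(t))^{-1}$ with $\beta$ bounded away from $1$ and $\dot\beta\in L^\infty$ has bounded derivative, we get $\dot u\in L^\infty$, and $\dot x=f(x)+g(x)u\in L^\infty$, $\dot y\in L^\infty$, giving the claimed $W^{1,\infty}$-regularity. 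Uniqueness of the global solution is just the local Lipschitz property already invoked. I expect the a priori bound to be the main obstacle: one must resolve the apparent circularity --~boundedness of $e$ is needed for~\eqref{eq:BIBO-ID}, \eqref{eq:BIBO-ID} is needed to fix the compact set $K$, and $K$ is needed for the coercivity constant $\gamma_0$ that ultimately bounds $e$~-- by carrying out the whole estimate on the maximal existence interval, where $(t,e(t),\eta(t))\in\mathcal{O}$ and hence $\Norm{e}<1/\phi$ holds by construction, and one must extract from mere invertibility-plus-positive-definiteness of $\G$ the uniform bound on $K$ that lets the high-gain term dominate near the funnel boundary.
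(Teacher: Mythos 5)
Your proposal is correct and follows essentially the same route as the paper, which does not prove Proposition~\ref{Prop:FC} itself but imports it from~\cite[Thm.~7]{IlchRyan02b}: the Byrnes--Isidori transformation, the a~priori estimate on the maximal interval via the blow-up of the gain term (using positive definiteness of $\G$ on a compact set obtained from the funnel bound and the BIBS condition), and the continuation argument are exactly the standard funnel-control proof. The only small technicality you gloss over is that the BIBS condition~\eqref{eq:BIBO-ID} is stated for $y\in L^\infty_{\loc}([t^0,\infty),\R^m)$, whereas your output is only defined on $[t^0,\omega)$; this is repaired by extending $y$ (e.g.\ constantly) beyond $\omega$ before invoking~\eqref{eq:BIBO-ID}, as the paper does explicitly in the proofs of Theorem~\ref{Th:FiniteJImplFunnel} and Proposition~\ref{Prop:BoundM}.
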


\begin{remark}\label{Rem:FunnelBound}
The following holds according to Proposition~\ref{Prop:FC}:
\[
    \fa x^0\in\cD^{\phi}_{t^0}\ex M>0:\quad \Controls \neq \emptyset.
\]
Note that in general the bound $M$ depends on $f,g,h,\phi$, and $x^0$.
\end{remark}



\subsection{Main result}\label{Sec:MainResult}

We are now in the position to define the Funnel MPC (FMPC) algorithm. It is the
MPC~Algorithm~\ref{Algo:MPC} without the output constraint~\eqref{eq:ConstraintClassicalMPC} and
cost function~$\ell$ as in~\eqref{eq:stageCostClassicalMPC} replaced by~$\ell_\phi$ as
in~\eqref{eq:stageCostFunnelMPC}.
\begin{algo}[FMPC]\label{Algo:MPCFunnelCost}\ \\
    \textbf{Given:} System~\eqref{eq:Sys},  reference signal $y_{\rf}\in
    W^{1,\infty}(\Rp,\R^{m})$, funnel function $\phi\in\cG$, $M>0$, $t^0\in\Rp$,
    $x^0\in\cD^{\phi}_{t^0}$, and stage cost function~$\ell_{\phi}$ as in~\eqref{eq:stageCostFunnelMPC}.\\
    \textbf{Set} the time shift $\delta >0$, the prediction horizon $T\geq\delta$ and initialize the current time
        $\widehat t :=t^0$.\\
    \textbf{Steps:}
    \begin{enumerate}[(a)]
    \item\label{agostep:FunnelMPCFirst}
        Obtain a measurement of the state at~$\hat t$ and set $\widehat x :=x(\widehat t)$.
    \item
        Compute a solution $u^{\star}\in L^\infty([\widehat t,\widehat t +T],\R^{m})$ of the Optimal
        Control Problem (OCP)
    \begin{equation}\label{eq:FunnelMpcOCP}
        \mathop
                {\operatorname{minimize}}_{\substack
                {
                    u\in L^{\infty}([\widehat t,\widehat t+T],\R^{m}),\\
                    \SNorm{u}  \leq M
                }
            }\     \int_{\widehat t}^{\widehat t+T}\ell_\phi(t,x(t;\widehat t,\widehat x,u),u(t))\d t
    \end{equation}
    \item Apply the feedback law
        \begin{equation}\label{eq:FMPC-fb}
            \mu:[\widehat t,\widehat t+\delta)\times\R^n\to\R^m, \quad \mu(t,\widehat x) =u^{\star}(t)
        \end{equation}
        to system~\eqref{eq:Sys}.
        Increase $\widehat t$ by $\delta$ and go to Step~\ref{agostep:FunnelMPCFirst}.

    \end{enumerate}
\end{algo}

We show that the Funnel MPC Algorithm~\ref{Algo:MPCFunnelCost} is initially and recursively feasible for
every prediction horizon $T>0$. Application of FMPC to system~\eqref{eq:Sys} with
$(f,g,h)\in\cN^{m}$ guarantees tracking of a reference trajectory~$y_{\rf}\in
W^{1,\infty}(\Rp,\R^{m})$ within a prescribed performance funnel defined by~$\phi\in\cG$.

\begin{theorem}\label{Th:FunnelMPCRelDeg1}
    Consider system~\eqref{eq:Sys} with $(f,g,h)\in\cN^{m}$. Let $\phi\in\cG$, $\ y_{\rf}\in
    W^{1,\infty}(\Rp,\R^{m})$, $t^0\in\Rp$ and $B\subset\cD^{\phi}_{t^0}$ be a bounded set.
    Then there exists $M>0$ such that the FMPC
    Algorithm~\ref{Algo:MPCFunnelCost} with $T>0$ and $\delta>0$ is initially and
    recursively feasible for every $x^0\in B$, i.e., at time $\widehat t = t^0$ and at each
    successor time $\widehat t\in t^0+\delta\N$ the OCP~\eqref{eq:FunnelMpcOCP}
    has a solution. In particular, the closed-loop system consisting of~\eqref{eq:Sys} and the FMPC feedback~\eqref{eq:FMPC-fb} has a (not necessarily unique) global solution $x:[t^0,\infty)\to\R^n$ and the corresponding input is given by
    \[
        u_{\rm FMPC}(t) = \mu(t,x(\widehat t)),\quad t\in [\widehat t,\widehat t+\delta),\ \widehat t\in t^0+\delta\N.
    \]
    Furthermore, each global solution~$x$ with corresponding input $u_{\rm FMPC}$ satisfies:
    \begin{enumerate}[(i)]
        \item\label{th:item:BoundedInput}
$\fa t\ge t^0:\quad \Norm{u_{\rm FMPC}(t)}\leq M$.
        \item\label{th:item:ErrorInFunnel}The error $e=y-y_{\rf}$ evolves within the funnel
            $\cF_{\phi}$, i.e., $\Norm{e(t)} \le \phi(t)^{-1}$ for all $t\ge t^0$.
    \end{enumerate}
\end{theorem}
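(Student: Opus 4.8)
The plan is to reduce everything to a feasibility statement about a single OCP on an interval of length $T$, and then to chain the shifted OCPs together. The key observation is that the funnel controller of Proposition~\ref{Prop:FC} produces a \emph{certificate} control: given the system class $\cN^m$ (but dropping the sign-definiteness assumption, which I will need to circumvent) a control keeping the state in $\cD^\phi_t$ on the whole prediction window exists and is uniformly bounded, with a bound that can be made uniform over the bounded initial set $B$. Concretely, first I would establish the following claim: there is $M>0$ such that for every $t^0 \le \widehat t$ and every $\widehat x$ lying in a suitable bounded ``reachable'' enlargement of $B$ that still sits in $\cD^\phi_{\widehat t}$, the set $\ControlFunctions{T}{\widehat t}{\widehat x}{\phi}$ is nonempty and, moreover, contains a control along which $\ell_\phi$ stays integrable (finite cost). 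This makes the OCP~\eqref{eq:FunnelMpcOCP} well posed: the infimum is finite, so a (not necessarily unique) minimizer exists --- here I would invoke a standard direct-method / compactness argument in $L^\infty$ together with lower semicontinuity of $\int \ell_\phi$, using that $\ell_\phi$ is bounded below by $\l_u\Norm{u}^2 \ge 0$ and the constraint $\SNorm{u}\le M$ gives weak-$\ast$ compactness of the admissible set, while the state map $u\mapsto x(\cdot;\widehat t,\widehat x,u)$ is continuous in the relevant topology by the Carath\'eodory theory.

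\textbf{Getting the uniform bound $M$.}
The heart of the matter is the uniform bound $M$. I would work in Byrnes--Isidori form~\eqref{eq:BIF}. Since $\Gamma(x)$ need not be positive definite, I cannot apply Proposition~\ref{Prop:FC} verbatim; instead I would use the standard trick of pre-multiplying by $\Gamma(\Phi^{-1}(y,\eta))^{-1}$ or, more robustly, design an explicit feedback of the form $u = -\Gamma(\cdot)^{-1}\big(p(\cdot) + \dot y_{\rf} + k(t,y)\,e\big)$ with the funnel gain $k$ as in~\eqref{eq:FC}; plugging into~\eqref{eq:output_dyn} yields the clean error equation $\dot e = -k(t,e)\,e$, which --- by the usual funnel-control Gronwall-type argument on $\tfrac{d}{dt}\Norm{e}^2$ together with $\inf\phi>0$, $\Norm{\dot\phi}_\infty<\infty$ --- keeps $\phi(t)\Norm{e(t)}$ bounded away from $1$ on $[t^0,\infty)$, uniformly in $x^0\in B$ because $B$ is bounded and $\max_{x^0\in B}\phi(t^0)\Norm{h(x^0)-y_{\rf}(t^0)}<1$. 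Then the BIBS condition~\eqref{eq:BIBO-ID} bounds $\eta$ uniformly (the input to~\eqref{eq:zero_dyn} is $y=e+y_{\rf}$, which is bounded, and $\Norm{\eta^0}$ is bounded over $B$), hence $(y,\eta)$ lives in a fixed compact set, on which $p,q,\Gamma,\Gamma^{-1}$ are bounded, and therefore the feedback $u$ itself is bounded by a constant $M$ depending only on $f,g,h,\phi,y_{\rf},B$ --- \emph{not} on $T$ or on the particular $x^0$. This same $M$ is the one claimed in the theorem. I would also record that along this certificate control the cost is finite, since $\phi\Norm{e}$ is bounded away from $1$ so $\ell_\phi$ stays bounded on $[t^0,t^0+T]$.

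\textbf{Recursive feasibility and closing the loop.}
For recursion I would argue inductively over the sampling instants $\widehat t_n = t^0 + n\delta$. Suppose at $\widehat t_n$ the OCP is feasible with optimal $u^\star$ and the applied feedback has kept $x(t)\in\cD^\phi_t$ on $[\widehat t_n,\widehat t_n+\delta]$; this holds because finiteness of the optimal cost forces $\phi(t)\Norm{h(x(t))-y_{\rf}(t)}<1$ for a.e.\ $t$ in the window (else the integral diverges), and continuity upgrades ``a.e.'' to ``all $t$''. In particular the new state $\widehat x_{n+1}=x(\widehat t_{n+1})$ lies in $\cD^\phi_{\widehat t_{n+1}}$. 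The subtlety is that $\widehat x_{n+1}$ need not lie in $B$, so I must have proven the certificate claim not just for initial data in $B$ but for all data reachable under admissible ($\SNorm{u}\le M$) controls from $B$ while staying in the funnel --- which is exactly why the uniform bound above was phrased over the enlarged reachable set, and why that set is itself bounded (the certificate argument shows the closed-loop state stays in a fixed compact set $K$ forever, so all reachable states sit in a bounded tube). Hence the certificate control constructed from $\widehat x_{n+1}$ witnesses feasibility of the OCP at $\widehat t_{n+1}$, closing the induction. Concatenating the pieces of closed-loop solution gives a global $x:[t^0,\infty)\to\R^n$ (global because $(y,\eta)$, hence $x=\Phi^{-1}(y,\eta)$, stays in the compact set $K$); the input satisfies $\Norm{u_{\rm FMPC}(t)}\le M$ by the OCP constraint, giving~\eqref{th:item:BoundedInput}, and $\phi(t)\Norm{e(t)}<1$ for all $t\ge t^0$ by the per-window argument above, giving~\eqref{th:item:ErrorInFunnel}. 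The main obstacle I anticipate is making the ``uniform over the reachable tube'' bookkeeping rigorous: one has to set up the compact set $K$ and the bound $M$ self-consistently (the certificate feedback must map $K$ into controls of norm $\le M$, and those controls must keep trajectories in $K$), which is a fixed-point-flavoured argument best handled by first fixing the funnel-error bound $\varepsilon$, deducing $K$ via BIBS, and only then reading off $M$ from the bounds of $p,q,\Gamma^{-1}$ on $K$.
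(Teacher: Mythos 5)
There is a genuine gap at the step where you pass from ``the optimal cost is finite'' to ``the closed-loop error stays in the funnel''. Outside the funnel the stage cost is \emph{not} infinite and not even nonnegative: if $\phi(t)\Norm{e(t)}>1$ then $1-\phi(t)^2\Norm{e(t)}^2<0$, so $\frac{1}{1-\phi(t)^2\Norm{e(t)}^2}-1<-1$ and, under $\SNorm{u}\le M$, the integrand is bounded above by $\l_u M^2$ there. Hence a trajectory that crosses the funnel boundary and evolves outside does not automatically produce a divergent integral, your claim ``finiteness forces $\phi(t)\Norm{e(t)}<1$ a.e., and continuity upgrades a.e.\ to all $t$'' does not hold as stated, and your asserted lower bound $\ell_\phi\ge\l_u\Norm{u}^2\ge0$ (used for coercivity/lower semicontinuity in your direct method) fails off the funnel. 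The real obstruction is the boundary \emph{crossing} itself: one must show that if $\phi\Norm{e}$ reaches $1$ at a first time $\widehat t$, then the positive part of the integrand is non-integrable on $[t^0,\widehat t)$. This is exactly what the paper proves in Theorem~\ref{Th:FiniteJImplFunnel}: using the BIBS condition and $\SNorm{u}\le M$ one gets $\dot y$ essentially bounded up to $\widehat t$, hence $t\mapsto 1-\phi(t)^2\Norm{e(t)}^2$ is Lipschitz, and a nonnegative Lipschitz function $g$ with $\int 1/g<\infty$ cannot vanish (Lemma~\ref{Lem:PosLipschitzCont}); mere continuity is not enough (consider $g(t)=\sqrt{t}$). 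Your existence argument also implicitly needs this characterization to restrict the minimization to funnel-confined controls and to conclude that the minimizer itself lies in $\Controls$, as the paper does via Theorem~\ref{Th:FiniteJImplFunnel} and Theorem~\ref{Th:Funnel_cost_l2}.

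The second gap concerns uniformity of $M$ over $B$. Your certificate is the funnel-gain feedback with gain $k(t)=\frac{1}{1-\phi(t)^2\Norm{e(t)}^2}$, whose value at $t=t^0$ is of order $\frac{\Norm{e(t^0)}}{1-\phi(t^0)^2\Norm{e(t^0)}^2}$ and blows up as $x^0$ approaches the funnel boundary. Since $B$ is only assumed bounded and contained in the \emph{open} set $\cD^{\phi}_{t^0}$, the quantity $\sup_{x^0\in B}\phi(t^0)\Norm{h(x^0)-y_{\rf}(t^0)}$ may equal $1$; the ``$\max<1$'' you invoke need neither be attained nor be smaller than $1$, so no single $M$ valid for all $x^0\in B$ comes out of this construction (this is precisely the dependence on $x^0$ flagged in Remark~\ref{Rem:FunnelBound}). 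The paper circumvents this with a different certificate: in Byrnes--Isidori coordinates it applies $u(t)=\G\big(\Phi^{-1}(y(t),\eta(t))\big)^{-1}\big(-p(y(t),\eta(t))+\phi(t^0)e(t^0)\dot\psi(t)+\dot y_{\rf}(t)\big)$ with $\psi=1/\phi$, which gives $\Norm{e(t)}=\phi(t^0)\Norm{e(t^0)}\,\psi(t)<\psi(t)$ and is bounded by $G_{\rm max}\big(P_{\rm max}+\SNorm{\dot\psi}+\SNorm{\dot y_{\rf}}\big)$ on the compact set furnished by Lemma~\ref{Lemma:ExistanceCompactSet} --- a bound independent of the initial margin, of $x^0\in B$ and of $T$ (Proposition~\ref{Prop:BoundM}). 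Your reachable-tube bookkeeping and the induction over sampling instants are in the same spirit as Theorem~\ref{Theorem:ExistanceBoundM} and Steps~1--3 of the paper's proof, but the two points above are exactly where the paper has to work, and as written your argument does not close them.
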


\begin{remark}
    \begin{enumerate}[(a)]
        \item
            The OCP~\eqref{eq:FunnelMpcOCP} has neither state nor terminal constraints.
            Nevertheless, application of the FMPC Algorithm~\ref{Algo:MPCFunnelCost} to the
            system~\eqref{eq:Sys} ensures that a global solution of the closed-loop system exists
            and the error evolves within the funnel. However, note that this solution is not unique
            in general. The reason is that the solution of the OCP~\eqref{eq:FunnelMpcOCP} found in
            each step may not be unique. The MPC algorithm has to select a particular optimal
            control. In particular, Theorem~\ref{Th:FunnelMPCRelDeg1} shows that the
            properties~\ref{th:item:BoundedInput} and~\ref{th:item:ErrorInFunnel} are independent
            of the particular choice made within the MPC algorithm, since they hold for every such solution.
        \item
            FMPC is initially and recursively feasible for every choice of $T>0$. Usually, recursive
            feasibility for Model Predictive Control can only be guaranteed when the prediction
            horizon is sufficiently long (see, e.g.~\cite{boccia2014stability}) or when additional
            terminal constraints are added to the OCP (see, e.g.~\cite{rawlings2017model}). For FMPC
            merely the input constraints given by $M>0$ must be sufficiently large.
    \end{enumerate}
\end{remark}

The proof is carried out over several steps in Section~\ref{Sec:ProofMainResult}. In
Section~\ref{SubSec:OCP} we first assume that the set~$\Controls{}$ is non-empty and prove that the
optimization problem \eqref{eq:FunnelMpcOCP} has a solution~$u^{\star}$ and this solution is an
element of $\Controls{}$. We further show that the stage cost function~$\ell_\phi$ as
in~\eqref{eq:stageCostFunnelMPC} guarantees that application of $u^{\star}$ ensures that the
tracking error $e=y-y_{\rf}$ evolves within the funnel~$\cF_{\phi}$. In Section~\ref{SubSec:Feasibility} we
prove initial and recursive feasibility of the Funnel MPC Algorithm~\ref{Algo:MPCFunnelCost} by
showing that there exists  $M>0$ such that the set $\PhiControls{T}{\widehat{t}}{\widehat{x}} $ is initially
(i.e., at $\widehat{t}=t^0$) and recursively (i.e., at $\widehat{t}=t^0+\delta n$ after $n$ steps of
Algorithm~\ref{Algo:MPCFunnelCost}) non-empty, where $\widehat{x}$ is the state of the system at
time~$\widehat{t}$.




\section{Examples/Simulations}\label{Sec:Simulations}
\begin{example}[Linear system]\label{Example:LinearSystem}
    To illustrate the system class~$\cN^{m}$, we consider the example of a linear
    time-invariant system of the form
    \begin{equation}\label{eq:LinearSys}
        \begin{aligned}
            \dot{x}(t)  & = A x(t) + B u(t),\quad x(t^0)=x^0\\
            y(t)        & = C x(t),
        \end{aligned}
    \end{equation}
    where $(A,B,C) \in \R^{n \times n} \times \R^{n \times m} \times \R^{m \times n}$. This linear
    system has global relative degree~$r=1$, if
    \[
            CB  \in \GL_m(\R).
    \]
    It is shown in~\cite[Lemma~2.1.3]{Ilch93} that there exists an invertible matrix~$V\in\R^n$ such
    that the coordinate transformation
    \[
        \Phi(x):=Vx=(y,\eta)
    \]
    transforms the system~\eqref{eq:LinearSys} into the Byrnes-Isidori form
    \begin{equation}\label{eq:LinearSysByrnesIsidori}
        \begin{aligned}
            \dot{y}(t)      & = A_1 y(t) + A_2 \eta(t)  + \G u(t),\quad (y(t^0), \eta(t^0))=\Phi(x^0)\\
            \dot{\eta}(t)   & = A_3 y(t) + A_4 \eta(t),
        \end{aligned}
    \end{equation}
    with $(A_1, A_2, A_3, A_4)\in
           \R^{m \times m} \times \R^{m \times (n-m) } \times
           \R^{(n-m) \times m} \times \R^{(n-m)\times (n-m)}$.
    It is well known from the theory of linear differential equations that, if~$A_4$ is Hurwitz,
    i.e.,  all of its eigenvalues have negative real part, then $\eta (\cdot;t^0,\eta^0,y)$ is
    bounded for every $y\in L^\infty(\R_{\ge 0},\R^m)$. The~BIBS condition~\eqref{eq:BIBO-ID} is
    therefore satisfied in this case.
\end{example}


\subsection{Exothermic chemical reaction}

To demonstrate the application of the FMPC Algorithm~\ref{Algo:MPCFunnelCost}, we consider a
model of an exothermic chemical reaction which was used in~\cite{IlchTren04} to study funnel control
with input saturation and in~\cite{LibeTren10} to demonstrate the feasibility of the bang-bang
funnel controller. The model for one reactant $x_1$, one product $x_2$ and temperature $y$ of the
reactor is given by the equations
\begin{equation}\label{eq:ExampleExothermicReaction}
    \begin{aligned}
        \dot{y}(t)   &= b\,   p(x_1(t), x_2(t), y(t)) -q\,y(t) + u(t),\\
        \dot{x}_1(t) &= c_1\, p(x_1(t), x_2(t), y(t)) +d(x_1^{\text{in}}-x_1(t)),\\
        \dot{x}_2(t) &= c_2\, p(x_1(t), x_2(t), y(t)) +d(x_2^{\text{in}}-x_2(t)),
    \end{aligned}
\end{equation}
with $b,d,q\in\Rpp$, $c_1<0$, $c_2\in\R$,  $x_{1/2}^{\text{in}}\geq0$, and
$p:\Rp\times\Rp\times\Rpp\to\Rp$ is a locally Lipschitz continuous function with $p(0,0,t)=0$  for all
$t>0$. The reference signal is a constant positive function $y_{\rf}\equiv y^{*}>0$.
The system~\eqref{eq:ExampleExothermicReaction} is already given in Byrnes-Isidori form and
has global relative degree~$r=1$ with positive high-frequency gain. As in~\cite{IlchTren04} we choose for the function
$p$ the Arrhenius law $p(x_1, x_2, y)=k_0\me^{-\tfrac{k_1}{y}}x_1$ with $k_0,k_1\in\Rpp$. Since $c_1<0$, it is easy
to see that the subsystem
\begin{align*}
    \dot{x}_1(t) &= c_1 p(x_1(t), x_2(t), y(t)) +d(x_1^{\text{in}}-x_1(t)),\\
    \dot{x}_2(t) &= c_2 p(x_1(t), x_2(t), y(t)) +d(x_2^{\text{in}}-x_2(t)),
\end{align*}
satisfies the~BIBS condition~\eqref{eq:BIBO-ID}, when~$y$ is restricted to the set $\setdef{\! y\in
W^{1,\infty}(\Rp,\R)}{\forall\, t\ge 0:\ y(t)>0\!}$. We like to emphasize that the control must
guarantee that $y$ is always positive. The objective is to track the reference signal~$y_{\rf}$ by
application of the FMPC Algorithm~\ref{Algo:MPCFunnelCost} such that for a given $\phi\in\cG$ the
error $e:=y-y_{\rf}$ evolves within the prescribed performance funnel, i.e.,
$\phi(t)\Norm{e(t)} < 1$ for all $t\geq 0$. 

For the simulation we choose the funnel function $\varphi\in\cG$ given by $\phi(t) = \rbl100\me^{-2t}+1.5\rbr^{-1}$, $t\ge 0$, and allow a maximal control value of
$M = 600$, i.e., the input constraints are ${\SNorm{u}\leq600}$. As in~\cite{IlchTren04}, the initial
data is $(x_1^0, x_2^0, y^0) = (0.02,0.9,270)$, the reference
signal is $y_{\rf}\equiv y^{\star}=337.1$ and the parameters are
\[
     c_1=-1,\quad
     c_2=1,\quad
     k_0=\me^{25},\quad
     k_1=8700,\quad
     d= 1.1\quad
     q=1.25,\quad
     x_{1}^{\text{in}}=1,\quad
     x_{2}^{\text{in}}=0,\quad
     b=209.2.
\]
Due to discretisation, only step functions with constant
step length~$0.05$ were considered\footnote{
    By a step function on an interval~$[a,b]$ with constant step length~$\delta>0$, we mean a
    mapping $f:[a,b]\to\R$ which is constant on every interval $[a+k\delta,a+(k+1)\delta)\cap[a,b]$ for
    $k=0,\ldots,\lceil\frac{b-a}{\delta}\rceil-1$.
}
for the OCP~\eqref{eq:FunnelMpcOCP} of  the
FMPC Algorithm~\ref{Algo:MPCFunnelCost}. The prediction horizon and time shift are selected as $T=0.5$ and
$\delta=0.05$, resp. We further choose the parameter $\l_u=1$ for the stage cost
~$\ell_{\phi}$ given by~\eqref{eq:stageCostFunnelMPC}. The simulation was performed on the
time interval $[0,4]$ with the MATLAB routine~\texttt{ode45}. Although the considered step length is
relatively large, the FMPC Algorithm~\ref{Algo:MPCFunnelCost} achieves the control objective without further
tuning of the parameter $\l_u$. The simulation of the FMPC Algorithm~\ref{Algo:MPCFunnelCost}
applied to the model~\eqref{eq:ExampleExothermicReaction} is depicted
in Figure~\ref{Fig:Ex:ExothermicReaction}. While Figure~\ref{Fig:Ex:ExothermicReaction:Output} shows
the output of the system evolving within the funnel boundaries,
Figure~\ref{Fig:Ex:ExothermicReaction:Input} shows the corresponding input signal.

\begin{figure}[h]
    \centering
    \begin{subfigure}{.5\textwidth}
      \centering
      \includegraphics[width=\linewidth]{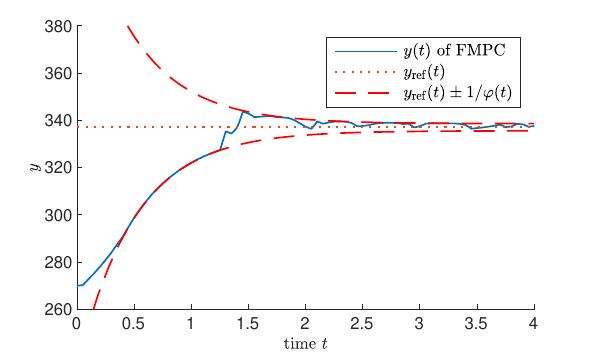}
      \caption{Funnel and system output}
      \label{Fig:Ex:ExothermicReaction:Output}
    \end{subfigure}%
    \begin{subfigure}{.5\textwidth}
      \centering
      \includegraphics[width=\linewidth]{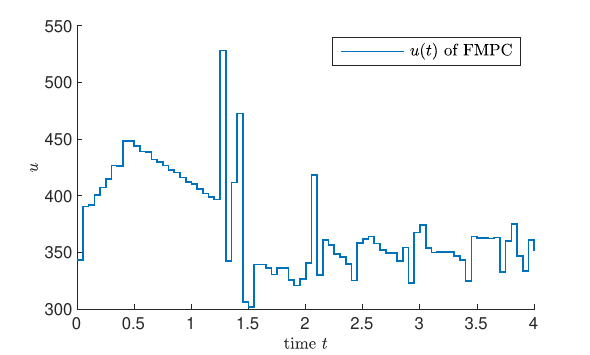}
      \caption{Input signal}
      \label{Fig:Ex:ExothermicReaction:Input}
    \end{subfigure}
    \caption{
        Simulation of system~\eqref{eq:ExampleExothermicReaction} under
        the feedback law~\eqref{eq:FMPC-fb} of the FMPC Algorithm~\ref{Algo:MPCFunnelCost}.
    }
    \label{Fig:Ex:ExothermicReaction}
\end{figure}

Figure~\ref{Fig:Ex:ExothermicReaction_MPC} shows the system output and the control signal if the
classical~MPC scheme~\ref{Algo:MPC} with cost function $\ell$ as in \eqref{eq:stageCostClassicalMPC}
and constraints~\eqref{eq:ConstraintClassicalMPC} is applied to
system~\eqref{eq:ExampleExothermicReaction} instead of the FMPC Algorithm~\ref{Algo:MPCFunnelCost},
with the same parameters, prediction horizon and discretisation.

\begin{figure}[h]
    \centering
    \begin{subfigure}{.5\textwidth}
      \centering
      \includegraphics[width=\linewidth]{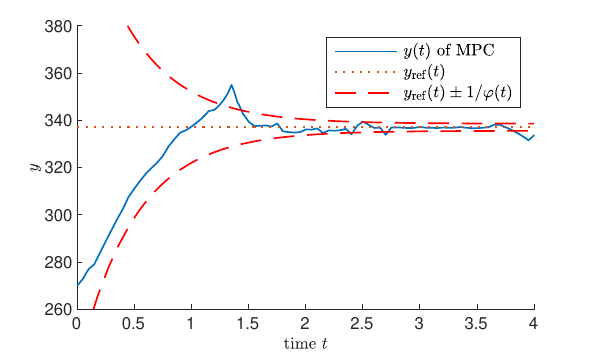}
      \caption{Funnel and system output}
      \label{Fig:Ex:ExothermicReaction_MPC:Output}
    \end{subfigure}%
    \begin{subfigure}{.5\textwidth}
      \centering
      \includegraphics[width=\linewidth]{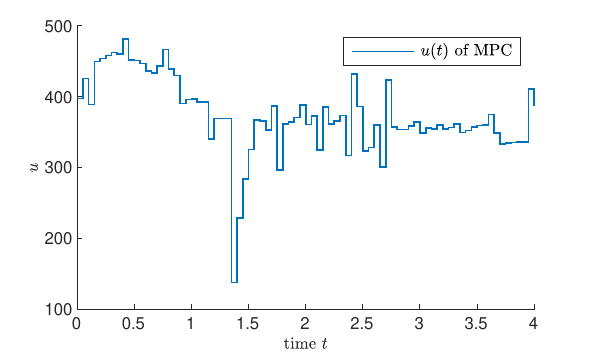}
      \caption{Input signal}
      \label{Fig:Ex:ExothermicReaction_MPC:Input}
    \end{subfigure}
    \caption{
        Simulation of system~\eqref{eq:ExampleExothermicReaction} under the classical~MPC
        scheme~\ref{Algo:MPC}.
    }
    \label{Fig:Ex:ExothermicReaction_MPC}
\end{figure}

This control does not achieve the control objective since the tracking error  exceeds the funnel
boundaries. Further adaptation of the parameter $\l_u$ is necessary in order to ensure that MPC with
the corresponding OCP~\eqref{eq:MpcOCP} is feasible with this prediction horizon and
discretisation. Such tuning of parameters in order to guarantee feasibility is not necessary for the
FMPC Algorithm~\ref{Algo:MPCFunnelCost} since the stage cost function~$\ell_{\phi}$ is automatically
increasing, if the tracking error is close to the funnel boundary.

The original funnel controller proposed in~\cite{IlchRyan02b} takes the form
\begin{equation}\label{eq:Ex:ExothermicReaction:FC}
    \begin{aligned}
        u(t)&=-\frac{1}{1-\phi(t)^2\Norm{e(t)}^2}e(t).
    \end{aligned}
\end{equation}
To compare the funnel controller~\eqref{eq:Ex:ExothermicReaction:FC} with the FMPC
Algorithm~\ref{Algo:MPCFunnelCost}, we chose the prediction horizon and  time shift as $T=1$ and
$\delta=0.1$, resp. Further, the parameter $\l_u=\tfrac{1}{10}$ for the cost functional
$\ell_{\phi}$ and a maximal control value of $M = 600$ were selected.

\begin{figure}[h]
    \centering
    \begin{subfigure}{.5\textwidth}
      \centering
      \includegraphics[width=\linewidth]{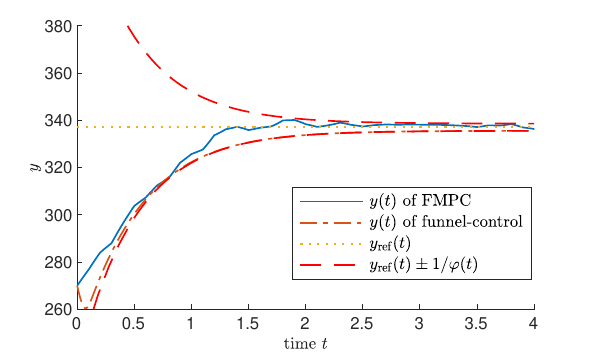}
      \caption{Funnel and tracking errors}
      \label{Fig:Ex:ExothermicReaction_FC:Output}
    \end{subfigure}%
    \begin{subfigure}{.5\textwidth}
      \centering
      \includegraphics[width=\linewidth]{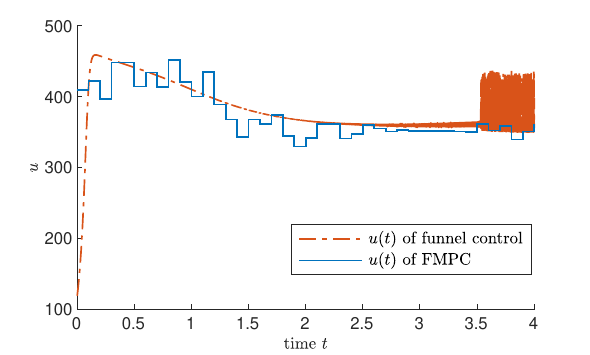}
      \caption{Input functions}
      \label{Fig:Ex:ExothermicReaction_FC:Input}
    \end{subfigure}
    \caption{
        Simulation of system~\eqref{eq:ExampleExothermicReaction} under
        controller~\eqref{eq:Ex:ExothermicReaction:FC} and FMPC Algorithm~\ref{Algo:MPCFunnelCost}.
    }
    \label{Fig:Ex:ExothermicReaction_FC}
\end{figure}

The performance of the funnel controller~\eqref{eq:Ex:ExothermicReaction:FC} and the FMPC
Algorithm~\ref{Algo:MPCFunnelCost} is depicted in Figure~\ref{Fig:Ex:ExothermicReaction_FC}. While
Figure \ref{Fig:Ex:ExothermicReaction_FC:Output} shows the tracking error of the two controllers
evolving within the funnel boundaries, Figure~\ref{Fig:Ex:ExothermicReaction_FC:Input} shows the
respective input signals. It is evident that both control techniques are feasible and achieve the
control objective.  The input signal of the funnel controller starts to oscillate at $t=2$ and
the amplitude of this oscillation increases abruptly at $t=3.5$. This behaviour is caused by a too
low sampling rate of the control signal. A relative error tolerance~(\texttt{RelTol}) of
$2.5\cdot10^{-7}$ was used. With an even higher sampling rate, this oscillation can be avoided. If a
larger error tolerance is used instead, this oscillation behaviour becomes worse. The funnel controller
becomes infeasible if the sampling rate is too low~(\texttt{RelTol} $>8\cdot10^{-6}$). The FMPC
Algorithm~\ref{Algo:MPCFunnelCost} does not show this problematic behaviour. Although FMPC uses
a relatively  wide step of $0.1$ and therefore adapts its control signal significantly less often
than the funnel controller, FMPC is feasible and the tracking error evolves within the performance funnel.



\subsection{Mass-on-car system}\label{SubSec:MassOnCar}

In this section we like to present some promising preliminary results on an extension of the FMPC
Algorithm~\ref{Algo:MPCFunnelCost} to a larger systems class.
For that we introduce the general notion of relative degree for system~\eqref{eq:Sys}.
Recall that the Lie derivative of~$h$ along~$f$ is defined by
\[
    \rbl L_f h\rbr(x)
        = \left(\sum_{i=1}^{n} \frac{\partial h_j}{\partial x_i}(x)\, f_i(x)\right)_{j=1,\ldots,n}
        = h'(x) f(x),
\]
and we may successively define~$L_f^k h = L_f (L_{f}^{k-1} h)$ with~$L_f^0 h = h$.
Furthermore, for the matrix-valued function~$g$ we have
\[
    (L_gh)(x) = \sbl (L_{g_1}h)(x), \ldots, (L_{g_m}h)(x) \sbr,
\]
where~$g_i$ denotes the~$i$-th column of~$g$ for $i=1,\ldots, m$.
Then system~\eqref{eq:Sys} is said to have \emph{(global) relative degree}~$r \in \mathbb{N}$, if
\begin{align*}
    \fa k \in \{1,\ldots,r-1\}\fa x \in \R^n:\
        (L_g L_f^{k-1} h)(x)  = 0  \quad \wedge \quad
        (L_g L_f^{r-1} h)(x)  \in \GL_m(\R),
\end{align*}
see~\cite[Sec. 5.1]{Isid95}.
The generalised \emph{high-frequency gain matrix} is defined as
\begin{equation}
    \G(x) := \rbl L_g L_f^{r-1} h\rbr(x) \in \GL_m(\R),\quad x\in\R^n.
\end{equation}
\begin{example}
    The linear system~\eqref{eq:LinearSys} of Example~\ref{Example:LinearSystem}
    has global relative degree~$r\in\N$ with~${r>1}$, if 
    \[
        \fa\;k \in \{1,\ldots,r-1\}:\
            CA^{k-1}B  = 0  \quad \wedge \quad
            CA^{r-1}B  \in \GL_m(\R).
    \]
    In other words, the relative degree is the number of times the output has to be differentiated in order
    for the input to appear explicitly on the right side of the equation.
\end{example}

For purposes of illustration that Funnel MPC shows promising results for this larger class of
systems with fixed relative degree $r\in\N$ we consider the example of a mass-spring system mounted
on a car from~\cite{SeifBlaj13} and compare FMPC with the funnel controller presented
in~\cite{BergIlch21}. This example was also examined in~\cite{BergIlch21}
and~\cite{berger2019learningbased} to compare different versions of funnel control. The mass $m_2$
moves on a ramp inclined by the angle $\vartheta\in[0,\frac{\pi}{2})$ and mounted on a car with mass
$m_1$, see Figure~\ref{Mass.on.car}.
\begin{figure}[htp]
    \begin{center}
    \includegraphics[trim=2cm 4cm 5cm 15cm,clip=true,width=6.5cm]{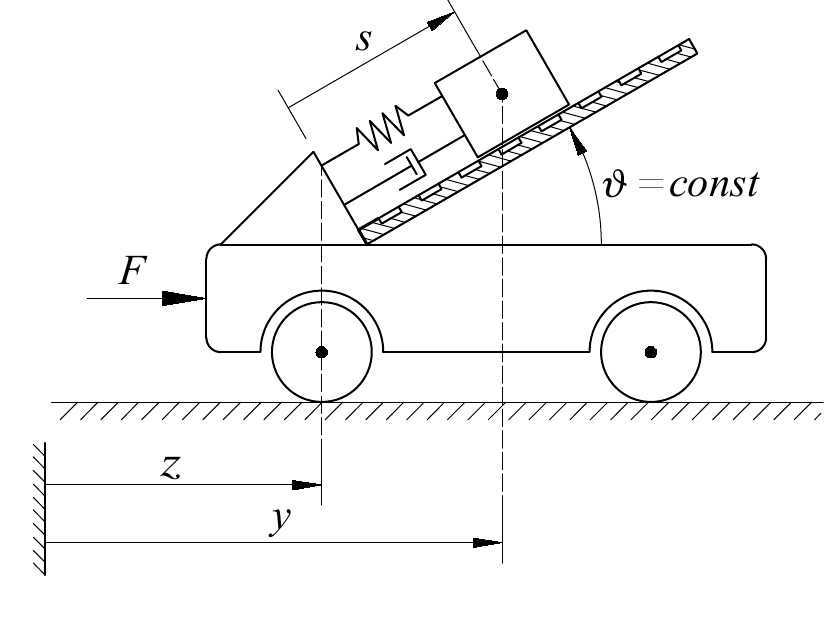}
    \end{center}
    \vspace*{-3mm}
    \caption{Mass-on-car system.}
    \label{Mass.on.car}
    \end{figure}
It is possible to control the force $u$ acting on the car. The
motion of the system is described by the equations
\begin{align}\label{eq:ExampleMassOnCarSystem}
    \begin{bmatrix}
        m_1 + m_2& m_2\cos(\vartheta)\\
        m_2 \cos(\vartheta) & m_2
    \end{bmatrix}
    \begin{pmatrix}
        \ddot{z}(t)\\
        \ddot{s}(t)
    \end{pmatrix}
    +
    \begin{pmatrix}
        0\\
        k s(t) +d\dot{s}(t)
    \end{pmatrix}
    =
    \begin{pmatrix}
        u(t)\\
        0
    \end{pmatrix},
\end{align}
where $z(t)$ is the horizontal position of the car and $s(t)$  the relative position of the mass on
the ramp at time $t$. The physical constants $k>0$ and $d>0$ are the coefficients of the spring and
damper, resp. The horizontal position of the mass on the ramp is the output $y$ of the system, i.e.,
\[
    y(t)=z(t)+s(t)\cos(\vartheta).
\]
By setting $\mu:=m_2 ( m_1 + m_2\sin^2(\vartheta))$, $\mu_1=\frac{m_1}{\mu}$, and
$\mu_2=\frac{m_2}{\mu}$, the system takes the form~\eqref{eq:LinearSys}, with
\[
    x(t):=
    \begin{pmatrix}
        z(t)\\
        \dot{z}(t)\\
        s(t)\\
        \dot{s}(t)\\
    \end{pmatrix}
    \!,\,
    A:=
    \begin{bmatrix}
        0& 1& 0& 0\\
        0& 0& \mu_2k\cos(\vartheta)& \mu_2d\cos(\vartheta)\\
        0& 0& 0& 1\\
        0& 0&-(\mu_1+\mu_2)k& -(\mu_1+\mu_2)d\\
    \end{bmatrix}
    \!,\,
    B:=
    \begin{bmatrix}
        0\\
        \mu_2\\
        0\\
        -\mu_2\cos(\vartheta)\\
    \end{bmatrix}
    \!,\,
    C:=
    \begin{bmatrix}
        1\\
        0\\
        \cos(\vartheta)\\
        0\\
    \end{bmatrix}^\top\!.
\]
It is easy to see that the system has global relative degree $r$ with
\[
    r =
        \begin{dcases}
            2, & \vartheta\in\rbl0,\tfrac{\pi}{2}\rbr\\
            3, & \vartheta=0
        \end{dcases}
\]
and the scalar high-frequency gain $\G=CA^{r-1}B$ is positive.

We choose the same parameters $m_1=4$, $m_2=1$, $k=2$, $d=1$, and initial values
$z(0)=s(0)=\dot{z}(0)=\dot{s}(0) = 0$ as in~\cite{BergIlch21}. The objective is tracking of the
reference signal $y_{\rf}:t\mapsto \cos(t)$, such that for $\phi\in\cG$ the error function $t\mapsto
e(t):=y(t)-y_{\rf}(t)$ evolves within the prescribed performance funnel, i.e., $\phi(t)\Norm{e(t)} <
1$ for all $t\geq 0$.\\

\noindent
\textbf{Case 1}: If $0<\vartheta<\frac{\pi}{2}$, then the system~\eqref{eq:ExampleMassOnCarSystem} has
relative degree $r=2$. The funnel controller presented in~\cite{BergIlch21} takes the form
\begin{equation}\label{eq:Ex:MassOnCarRelDeg2:FC}
    \begin{aligned}
        w(t)&=\phi(t)\dot{e}(t)+\alpha(\phi(t)^2e(t)^2)\phi(t)e(t),\\
        u(t)&=-\alpha(w(t)^2)w(t),
    \end{aligned}
\end{equation}
with $\alpha(s)=\frac{1}{1-s}$ for $s\in[0,1)$.
Due to discretisation, only step functions with constant step length~$0.04$ are considered for the
OCP~\eqref{eq:FunnelMpcOCP} of  the FMPC Algorithm~\ref{Algo:MPCFunnelCost}. The prediction horizon
and  time shift are selected as $T=0.6$ and $\delta=0.04$, resp. We further choose the parameter
$\l_u=\tfrac{1}{100}$ for the stage cost~$\ell_{\phi}$ and allow a maximal control value of $M
= 30$. As in~\cite{BergIlch21}, the funnel function $\phi(t) = \rbl5\me^{-2t}+0.1\rbr^{-1}$, $t\ge
0$, is chosen and the case $\vartheta=\tfrac{\pi}{4}$ is considered. All simulations are performed
on the time interval $[0,10]$ with the MATLAB routine \texttt{ode45}.

\begin{figure}[h]
    \centering
    \begin{subfigure}{.5\textwidth}
      \centering
      \includegraphics[width=\linewidth]{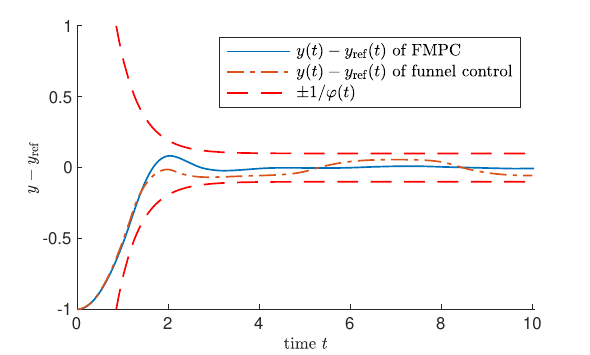}
      \caption{Funnel and tracking errors}
      \label{Fig:Ex:MaxOnCarRelDeg2:Output}
    \end{subfigure}%
    \begin{subfigure}{.5\textwidth}
      \centering
      \includegraphics[width=\linewidth]{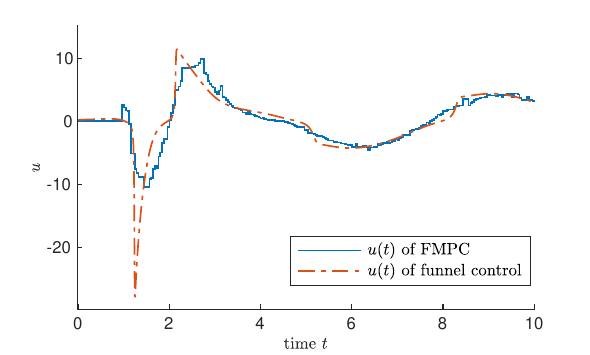}
      \caption{Input functions}
      \label{Fig:Ex:MaxOnCarRelDeg2:Input}
    \end{subfigure}
    \caption{
        Simulation of system~\eqref{eq:ExampleMassOnCarSystem} with $\vartheta=\tfrac{\pi}{4}$ under
        controller~\eqref{eq:Ex:MassOnCarRelDeg2:FC} and FMPC Algorithm~\ref{Algo:MPCFunnelCost}.
    }
    \label{Fig:Ex:MassOnCarRelDeg2}
\end{figure}

The performance of the funnel controller~\eqref{eq:Ex:MassOnCarRelDeg2:FC} and the FMPC
Algorithm~\ref{Algo:MPCFunnelCost} is depicted in Figure~\ref{Fig:Ex:MassOnCarRelDeg2}. While Figure
\ref{Fig:Ex:MaxOnCarRelDeg2:Output} shows the tracking error of the two controllers evolving within
the funnel boundaries, Figure~\ref{Fig:Ex:MaxOnCarRelDeg2:Input} shows the respective input signals.
It is evident that both control techniques are feasible and achieve the control objective.
The funnel controller is able to generate a smooth input signal,
while the~OCP~\eqref{eq:FunnelMpcOCP} of the FMPC Algorithm~\ref{Algo:MPCFunnelCost}
is optimized over step functions with constant step length~$0.04$. Nevertheless, it seems that the
FMPC Algorithm achieves a more accurate tracking of the reference signal~$y_{\rf}$ and, at the same
time, exhibits a smaller range of employed control values. Funnel control tends to change the
control values very quickly and the control signal shows spikes. The FMPC algorithm, however, avoids
this due to prediction of the future system behaviour. Similarly to~\cite{berger2019learningbased},
we observed that feasibility of the funnel controller~\eqref{eq:Ex:MassOnCarRelDeg2:FC} is not
maintained for a sampling rate $\tau =\frac{1}{300}$. Instead $\tau =\frac{1}{500}$ turns out
to be sufficient. The FMPC Algorithm~\ref{Algo:MPCFunnelCost} is feasible for both sampling rates.
Since the funnel controller needs a far higher sampling rate than~FMPC and needs to be able to adapt
its control signal very quickly, whereas FMPC uses constant steps with a relatively long
length, funnel control exhibits more demanding hardware requirements to stay feasible in
application than FMPC

When the classical~MPC Algorithm~\ref{Algo:MPC} with OCP~\eqref{eq:MpcOCP} is applied to the
system~\eqref{eq:ExampleMassOnCarSystem} with the same parameters, prediction rate and step length
instead of the FMPC Algorithm~\ref{Algo:MPCFunnelCost}, then the tracking error leaves the
performance funnel and hence the control objective is not achieved (see
Figure~\ref{Fig:Ex:MassOnCarRelDeg2_MPC}). Furthermore, the control signal exhibits quite severe
peaks.

\begin{figure}[h]
    \centering
    \begin{subfigure}{.5\textwidth}
      \centering
      \includegraphics[width=\linewidth]{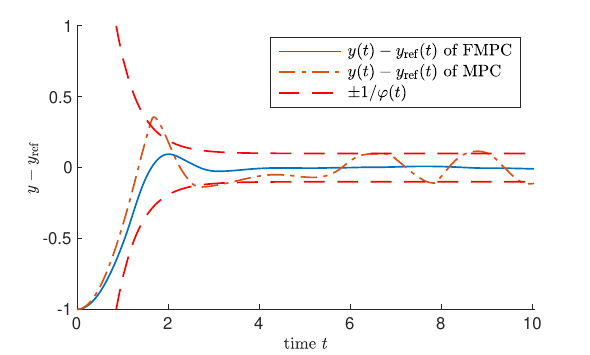}
      \caption{Funnel and tracking errors}
      \label{Fig:Ex:MaxOnCarRelDeg2_MPC:Output}
    \end{subfigure}%
    \begin{subfigure}{.5\textwidth}
      \centering
      \includegraphics[width=\linewidth]{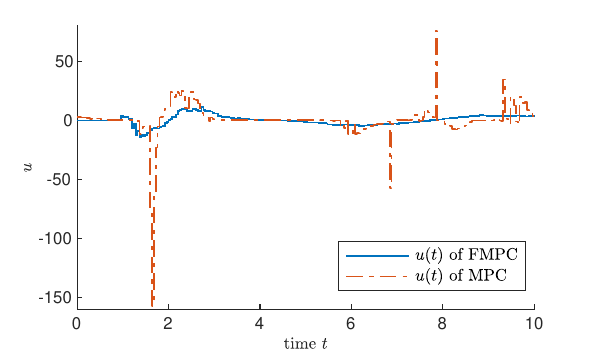}
      \caption{Input functions}
      \label{Fig:Ex:MaxOnCarRelDeg2_MPC:Input}
    \end{subfigure}
    \caption{
        Simulation of system~\eqref{eq:ExampleMassOnCarSystem} with $\vartheta=\tfrac{\pi}{4}$ under
        a classical~MPC scheme~\ref{Algo:MPC} with OCP~\eqref{eq:MpcOCP} and the FMPC
        Algorithm~\ref{Algo:MPCFunnelCost}.
    }
    \label{Fig:Ex:MassOnCarRelDeg2_MPC}
\end{figure}

A possible explanation may be that the constraint $\Norm{y(t)-y_{\rf}(t)} \leq \frac{1}{\phi(t)}$ of
the OCP~\eqref{eq:MpcOCP} does not influence the control value as long as it is
satisfied, and when the error is close to the funnel boundary, it is too late for the controller to
react. The controller attempts to compensate this by generating very large control signals. The FMPC
algorithm is able to avoid this behaviour by reacting in advance to a close funnel boundary, because
a small distance is penalised by the stage cost. Further adaptation of the parameter
$\l_u$, a smaller step length, or a longer prediction horizon are necessary in order to guarantee
feasibility of the classical~MPC scheme~\ref{Algo:MPC}.
Figure~\ref{Fig:Ex:MassOnCarRelDeg2_MPC_tuned} depicts the simulation of the classical~MPC scheme
with such adapted parameter ($\l_u=\tfrac{1}{4450}$) in comparison to the FMPC algorithm with the
same parameters as before. With these tuned parameters, the classical~MPC~\ref{Algo:MPC} scheme
 achieves the control objective and the error evolves within the funnel boundaries.

\begin{figure}[h]
    \centering
    \begin{subfigure}{.5\textwidth}
      \centering
      \includegraphics[width=\linewidth]{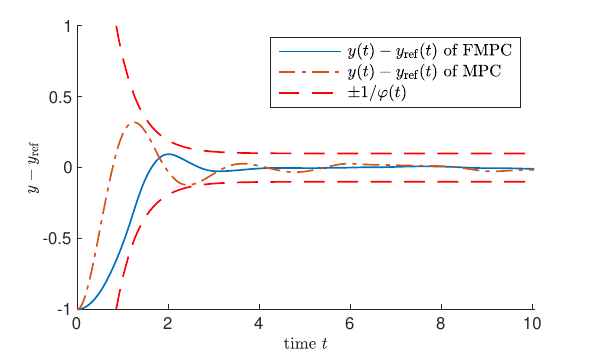}
      \caption{Funnel and tracking errors}
      \label{Fig:Ex:MaxOnCarRelDeg2_MPC_tuned:Output}
    \end{subfigure}%
    \begin{subfigure}{.5\textwidth}
      \centering
      \includegraphics[width=\linewidth]{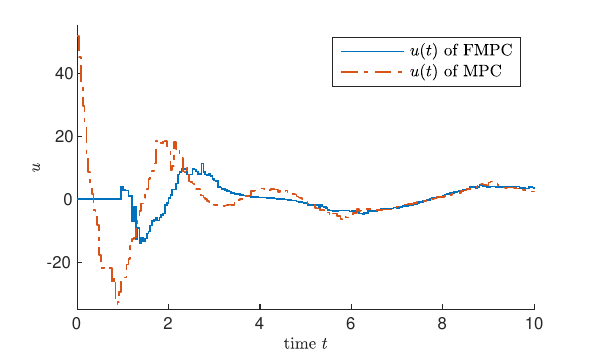}
      \caption{Input functions}
      \label{Fig:Ex:MaxOnCarRelDeg2_MPC_tuned:Input}
    \end{subfigure}
    \caption{
        Simulation of system~\eqref{eq:ExampleMassOnCarSystem} with $\vartheta=\tfrac{\pi}{4}$ under
        the FMPC Algorithm~\ref{Algo:MPCFunnelCost} and classical~MPC~\ref{Algo:MPC} 
        with OPC~\eqref{eq:MpcOCP} and $\l_u = \frac{1}{4450}$.
    }
    \label{Fig:Ex:MassOnCarRelDeg2_MPC_tuned}
\end{figure}

\noindent
\textbf{Case 2}:
If $\vartheta=0$, then the system~\eqref{eq:ExampleMassOnCarSystem} has
relative degree $r=3$. The funnel controller from~\cite{BergIlch21} takes the form
\begin{equation}\label{eq:Ex:MassOnCarRelDeg3:FC}
    \begin{aligned}
        w(t)&=\phi(t)\ddot{e}(t)+\g(\phi(t)\dot{e}(t)+\g(\phi(t)e(t))),\\
        u(t)&=-\g(w(t)),
    \end{aligned}
\end{equation}
where $\g(s) = \frac{s}{1-s^2}$ for $s\in[0,1)$.
The OCP~\eqref{eq:FunnelMpcOCP} of  the FMPC Algorithm~\ref{Algo:MPCFunnelCost} is solved over step
functions with constant step length $\tfrac{1}{15}$. The prediction horizon is $T=1$ and the time
shift is $\delta=\tfrac{1}{15}$. We further choose the parameter $\l_u=\frac{1}{100}$ for the stage
cost~$\ell_{\phi}$ and allow a maximal control value of $M = 30$. As in~\cite{BergIlch21}, we
choose the funnel function $\phi(t) = \rbl3\me^{-t}+0.1\rbr^{-1}$, $t\ge 0$.
\begin{figure}[H]
    \centering
    \begin{subfigure}{.5\textwidth}
      \centering
      \includegraphics[width=\linewidth]{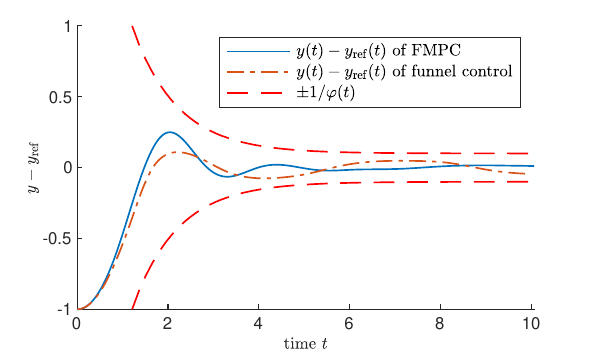}
      \caption{Funnel and tracking errors}
      \label{Fig:Ex:MaxOnCarRelDeg3:Output}
    \end{subfigure}%
    \begin{subfigure}{.5\textwidth}
      \centering
      \includegraphics[width=\linewidth]{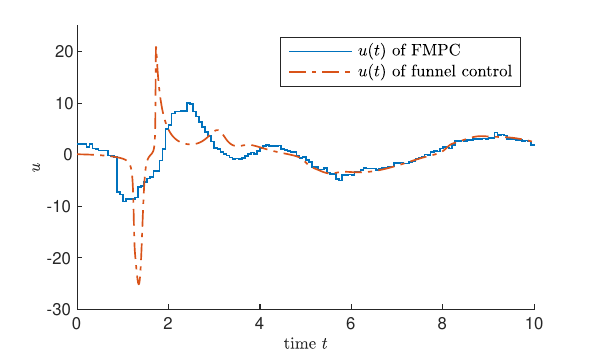}
      \caption{Input functions}
      \label{Fig:Ex:MaxOnCarRelDeg3:Input}
    \end{subfigure}
    \caption{
        Simulation of system~\eqref{eq:ExampleMassOnCarSystem} with $\vartheta=0$ under
        controller~\eqref{eq:Ex:MassOnCarRelDeg3:FC} and FMPC Algorithm~\ref{Algo:MPCFunnelCost}.
    }
    \label{Fig:Ex:MassOnCarRelDeg3}
\end{figure}
The performance of the funnel controller~\eqref{eq:Ex:MassOnCarRelDeg3:FC} and the FMPC
Algorithm~\ref{Algo:MPCFunnelCost} are depicted in Figure~\ref{Fig:Ex:MassOnCarRelDeg3}. The results
are similar to the first case with relative degree $r=2$. Note that the funnel
controllers~\eqref{eq:Ex:MassOnCarRelDeg2:FC} and~\eqref{eq:Ex:MassOnCarRelDeg3:FC} are structurally different
due to the altered relative degree $r$, whereas the FMPC Algorithm~\ref{Algo:MPCFunnelCost} is
the same in both cases. This is of particular relevance when the relative degree~$r$ is not known a priori.
The above simulations suggest that the FMPC Algorithm~\ref{Algo:MPCFunnelCost} also works for
systems with higher relative degree and exhibits a promising performance.




\section{Proof of the main result~\ref{Sec:MainResult}}\label{Sec:ProofMainResult}


\subsection{Optimal control problems with funnel-like stage costs}\label{SubSec:OCP}

Before proving initial and recursive feasibility of the Funnel MPC Algorithm~\ref{Algo:MPC}, we
show that, by using the stage cost function~$\ell_\phi$ as in $\eqref{eq:stageCostFunnelMPC}$, the
optimization problem~\eqref{eq:FunnelMpcOCP} has a solution and that this solution, if applied to the
system~\eqref{eq:Sys}, guarantees that the error~$e(t)=y(t)-y_{\rf}(t)$ evolves within the
performance funnel $\cF_\phi$. To that end we define, for $T>0$, $M>0$, $t^0\in\Rp$, and
$x^0\in\R^n$, the associated
\textit{Optimal Control Problem (OCP)}
\begin{equation}\label{eq:OptimalControlProblem}
            \mathop{\operatorname{minimize}}_{
                \substack
                {
                    u\in L^{\infty}([t^0, t^0+T],\R^{m}),\\
                    \SNorm{u}  \leq M
                }
            }\quad
                \int_{t^0}^{t^0 + T}\ell_\phi(t,x(t; t^0, x^0 ,u),u(t))\d t.
\end{equation}
If the Lebesgue integral in~\eqref{eq:OptimalControlProblem}  does not exist for some  $u\in
L^{\infty}([t^0, t^0+T],\R^{m})$ with $\SNorm{u}  \leq M$ (i.e., both the Lebesgue integrals of the
positive and negative part of $\ell_\phi(\cdot,x(\cdot; t^0, x^0 ,u),u(\cdot))$ are infinite), then
its value is treated as infinity. This may happen when $\phi(t) \Norm{h(x(t;t^0,x^0,u))-y_{\rf}(t)}
= 1$ for some $t\in[t^0,t^0+T]$. If the set of all such points does not have Lebesgue measure zero,
then the integral is treated as infinity as well.

We like to point out that there is a subtle difference between a Lebesgue integrable function (which
belongs to $L^1$) and a function for which the Lebesgue integral exists (which does not need to be
in $L^1$).
To make this difference clearer we call a measurable function $\zeta : B \to \R$ on a Borel set
$B\subseteq\R$ \textit{quasi-integrable}, if for $\zeta^+ :=
\max\{\zeta,0\}$ and $\zeta^-:=\max\{-\zeta,0\}$ at least one of the Lebesgue integrals
\[
    \int_B \zeta^+(t) \d t\qquad \text{or} \qquad \int_B \zeta^-(t) \d t
\]
is finite.

Proposition~\ref{Prop:FC} guarantees that,
if the funnel controller~\eqref{eq:FC} is applied to the system~\eqref{eq:Sys} with initial value
$x^0\in\cD^{\phi}_{t^0}$, then the tracking error  evolves in the interior of the funnel. It is not directly
clear that this also holds true if a solution of the optimization
problem~\eqref{eq:OptimalControlProblem} is applied to the system~\eqref{eq:Sys}. If the
initial error is inside the funnel, then it might still be possible that the error~$e$ touches or
even exceeds the boundary and evolves outside of the funnel boundary after some time.
In~\cite{berger2019learningbased} this issue was resolved by appending state constraints to the
optimal control problem. In the following we show that such constraints are unnecessary. In
fact, if an arbitrary control function $u\in L^\infty([t^0,t^0+T],\R^{m})$ such that
$\ell_{\phi}(\cdot,x(\cdot;t^0,x^0,u),u(\cdot))$ is quasi-integrable over $[t^0,t^0+T]$ is applied to the system,
then it is guaranteed that the error $e$ evolves within the funnel. To show this, an elementary lemma
is proved first.

\begin{lemma}\label{Lem:PosLipschitzCont}
   Let $T>0$ and $g:[0,T]\to\Rp$ be Lipschitz continuous.
    If~$\int_{0}^{T}\frac{1}{g(s)}\d s<\infty$, then $g(t)>0$ for all $t\in[0,T]$.
\end{lemma}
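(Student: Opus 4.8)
The statement is elementary: if $g$ is Lipschitz, nonnegative, and $1/g$ is integrable on $[0,T]$, then $g$ never vanishes on $[0,T]$. I would argue by contraposition: suppose $g(t_0)=0$ for some $t_0\in[0,T]$, and show $\int_0^T \frac{1}{g(s)}\,\d s=\infty$. The idea is that near $t_0$, Lipschitz continuity forces $g(s)\le L\,|s-t_0|$, so $\frac{1}{g(s)}\ge \frac{1}{L\,|s-t_0|}$, and the latter is not integrable near $t_0$ (it is the classic divergent integral $\int \frac{\d s}{s}$).

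**Key steps.** First, let $L$ denote a Lipschitz constant for $g$. Suppose for contradiction that $g(t_0)=0$. Since $t_0\in[0,T]$, at least one of the intervals $[t_0,t_0+\eta]$ or $[t_0-\eta,t_0]$ lies in $[0,T]$ for small $\eta>0$; without loss of generality treat the right interval (the other is symmetric). For $s\in[t_0,t_0+\eta]$ we have, using $g\ge 0$ and $|g(s)-g(t_0)|\le L|s-t_0|$, that $0\le g(s)=g(s)-g(t_0)\le L(s-t_0)$. If $g(s)=0$ somewhere on this interval the integrand is literally $+\infty$ (or the integral is not even defined as a finite number), so we may assume $g(s)>0$ on $(t_0,t_0+\eta]$ and then $\frac{1}{g(s)}\ge \frac{1}{L(s-t_0)}$. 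Hence $\int_{t_0}^{t_0+\eta}\frac{1}{g(s)}\,\d s \ge \frac{1}{L}\int_{t_0}^{t_0+\eta}\frac{\d s}{s-t_0} = \frac{1}{L}\bigl[\ln(s-t_0)\bigr]_{t_0}^{t_0+\eta} = +\infty$. Since $1/g\ge 0$ everywhere, adding the contribution from the rest of $[0,T]$ only keeps the total integral infinite, contradicting the hypothesis $\int_0^T \frac{1}{g(s)}\,\d s<\infty$. Therefore $g(t)>0$ for all $t\in[0,T]$.

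**Main obstacle.** There is no real obstacle — this is a short computation — but the one point requiring a little care is the case $t_0=0$ or $t_0=T$, where only a one-sided neighborhood is available inside $[0,T]$; the argument above already handles this by choosing whichever of the two half-intervals is contained in $[0,T]$, and the divergence of $\int \frac{\d s}{s-t_0}$ over a one-sided neighborhood of $t_0$ is all that is used. A second minor subtlety is measurability/definedness of $1/g$ when $g$ has zeros: if $g$ vanishes on a set of positive measure the integral hypothesis already fails, and if $g$ vanishes only at isolated points the bound $\frac{1}{g(s)}\ge\frac{1}{L|s-t_0|}$ on a punctured neighborhood suffices, so in all cases the conclusion follows.
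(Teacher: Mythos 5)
Your argument is correct and is essentially the same as the paper's: assume $g$ vanishes at some point, use the Lipschitz bound $g(s)\le L\,|s-t_0|$ to minorize $1/g$ by the non-integrable function $\tfrac{1}{L\,|s-t_0|}$ on a (one-sided) neighborhood, and contradict the finiteness of the integral. The only cosmetic difference is that you treat interior and boundary points uniformly via one-sided intervals, whereas the paper argues for interior points and remarks that the endpoint cases are analogous.
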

\begin{proof}
    First assume that there exists $\tau\in(0,T)$ such that $g(\tau)=0$. Choose $\e>0$ such that
    ${(\tau-\e,\tau+\e)\subset [0,T]}$. Since $g$ is Lipschitz continuous, we have  that
    \[
      \ex C>0 \fa s\in(\tau-\e,\tau+\e):\  g(s) =     \Abs{ g(s)-g(\tau)}
            \leq  C\Abs{ s-\tau      }.
    \]
    Therefore,
    \begin{align*}
        \infty>\int_{0}^{T}\frac{1}{g(s)}\d s
        \geq \int_{\tau-\e}^{\tau+\e}\frac{1}{g(s)}\d s
        \geq \int_{\tau-\e}^{\tau+\e}\frac{1}{C\Abs{ s-\tau}}\d s
        =    \int_{-\e}^{\e}\frac{1}{C\Abs{ s}}\d s
        =    \infty,
    \end{align*}
    a contradiction. A similar proof applies in the cases $\tau=0$ and $\tau=T$.
\end{proof}

\begin{remark}
    Lemma~\ref{Lem:PosLipschitzCont} is not true for all uniformly
    continuous functions in general. Consider the example:
    \[
        \int_{0}^1\frac{1}{\sqrt{x}}\d x
            = 2\sqrt{x}\Big|_0^1
            = 2.
    \]
\end{remark}
\begin{theorem} \label{Th:FiniteJImplFunnel}
    Consider system~\eqref{eq:Sys} with $(f,g,h)\in\cN^{m}$. Let $\phi\in\cG,\ y_{\rf}\in
    W^{1,\infty}(\Rp,\R^{m})$, $T>0$ $M>0$, $t^0\in\Rp$, and $x^0\in\R^n$ be given such that
    ${\Controls\neq \emptyset}$.
    Then the following identities hold:
    \begin{align*}
        \Controls &=
        \setdef
            {\!\! u\in \FeasibControls \!}
            {\!\!\!
                 \begin{array}{l}
                \ell_{\phi}(\cdot,x(\cdot;t^0,x^0,u),u(\cdot))\ \text{quasi-integrable on
                $[t^0,t^0+T]$},\\
                \int_{t^0}^{t^0 + T}\ell_\phi(t,x(t; t^0, x^0 ,u),u(t))\, {\rm d} t < \infty,
                \ \text{and}\ \SNorm{u} \le M \
            \end{array} \!\!\!\!
            }\\
            &=  \setdef
            {\!\! u\in \FeasibControls\!}
            {\!\!\!
                \begin{array}{l}
                    \ell_{\phi}(\cdot,x(\cdot;t^0,x^0,u),u(\cdot))\in L^1([t^0,t^0+T],\R),\\
                    \ell_{\phi}(\cdot,x(\cdot;t^0,x^0,u),u(\cdot)) \ge 0,\ \text{and}\ \SNorm{u} \le M  \\
                \end{array} \!\!\!\!
            }
    \end{align*}
\end{theorem}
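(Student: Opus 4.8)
The statement claims three sets coincide. I would prove the chain of inclusions
\[
    \Controls \subseteq S_1 \subseteq S_2 \subseteq \Controls,
\]
where $S_1$ is the first displayed set (quasi-integrable with finite integral) and $S_2$ is the second (nonnegative and $L^1$). The inclusion $S_2\subseteq S_1$ is immediate, since a nonnegative $L^1$ function is quasi-integrable with finite integral.

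For $\Controls\subseteq S_1$: take $u\in\Controls$. By definition of $\cD^\phi_t$, the trajectory satisfies $\phi(t)\Norm{h(x(t;t^0,x^0,u))-y_{\rf}(t)}<1$ for all $t\in[t^0,t^0+T]$, and $\SNorm{u}<M\le M$. I would then show $\ell_\phi(\cdot,x(\cdot;t^0,x^0,u),u(\cdot))$ is in fact bounded on the compact interval: the function $t\mapsto \phi(t)^2\Norm{h(x(t))-y_{\rf}(t)}^2$ is continuous on $[t^0,t^0+T]$ (composition of continuous maps, using $h\in\con^2$, continuity of the response, $\phi\in W^{1,\infty}$ hence continuous, $y_{\rf}\in W^{1,\infty}$ hence continuous) and takes values in $[0,1)$, so by compactness it is bounded away from $1$, say by $1-\rho$ with $\rho>0$. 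Hence $\frac{1}{1-\phi(t)^2\Norm{h(x(t))-y_{\rf}(t)}^2}-1\le \frac{1}{\rho}-1$, and $\l_u\Norm{u(t)}^2\le\l_u M^2$, so $\ell_\phi$ is bounded, nonnegative, measurable, hence in $L^1$ on $[t^0,t^0+T]$; in particular it is quasi-integrable with finite integral. This already shows $\Controls\subseteq S_2$, which together with $S_2\subseteq S_1$ gives $\Controls\subseteq S_1$.

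The substantive direction is $S_1\subseteq\Controls$, and this is where Lemma~\ref{Lem:PosLipschitzCont} enters. Let $u\in S_1$, so $u\in\FeasibControls$, $\SNorm{u}\le M$, and the cost is quasi-integrable with finite integral over $[t^0,t^0+T]$. I must show (a) $x(t;t^0,x^0,u)\in\cD^\phi_t$ for all $t$ in the interval, and (b) $\SNorm{u}<M$ strictly. For (a), set $g(t):=1-\phi(t^0+t)^2\Norm{h(x(t^0+t;t^0,x^0,u))-y_{\rf}(t^0+t)}^2$ for $t\in[0,T]$; since the response $x(\cdot;t^0,x^0,u)$ is absolutely continuous (indeed Lipschitz on the compact interval, as $u\in L^\infty$ and $f,g$ are $\con^1$ hence locally Lipschitz, so $\dot x$ is bounded), $\phi\in W^{1,\infty}$ is Lipschitz, $y_{\rf}\in W^{1,\infty}$ is Lipschitz, and $h\in\con^2$ is locally Lipschitz with bounded argument, the map $g$ is Lipschitz on $[0,T]$. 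The quasi-integrability hypothesis forces the negative part of $\ell_\phi$ to have finite integral; since the only way $\ell_\phi$ can be negative is via the $\frac{1}{1-\phi^2\|\cdot\|^2}-1$ term being negative — which cannot happen when $g(t)\in(0,1]$ — one argues that in fact $\ell_\phi\ge 0$ a.e., so the finite-integral hypothesis directly bounds $\int_{t^0}^{t^0+T}\bigl(\frac{1}{g}-1\bigr)<\infty$, whence $\int_0^T \frac{1}{g(t)}\d t<\infty$. Wait — I must be careful: where $g(t)\le 0$ the value of $\ell_\phi$ is $+\infty$ (if $g(t)=0$, i.e. $\phi\|\cdot\|=1$) or, where $\phi^2\|\cdot\|^2>1$, the term $\frac{1}{1-\phi^2\|\cdot\|^2}$ is negative. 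So $\ell_\phi$ \emph{can} be negative, precisely on the set where the error is outside the funnel. Here is the key point: if that set has positive measure the integral of the negative part could still be finite, so quasi-integrability alone does not exclude it — but I claim a topological argument does. Since $x^0\in\R^n$ is arbitrary we cannot assume $g(0)>0$; however, $u\in\FeasibControls$ only requires the ODE to have a solution on the interval, not that the error stays in the funnel, so I genuinely need to rule out the error leaving. The cleanest route: suppose for contradiction there is a first time $\tau\in[0,T]$ with $g(\tau)=0$ (such a $\tau$ exists by continuity if $g$ ever becomes $\le 0$ while… hmm, if $g(0)<0$ already then there is no first zero). I think the correct hypothesis being used is that $S_1$ membership already presupposes the integral $\int_{t^0}^{t^0+T}\ell_\phi<\infty$ in the \emph{quasi-integrable} sense, and on any subinterval where $g<0$ the integrand $\frac{1}{g}-1+\l_u\|u\|^2$ is negative and bounded below only if $g$ is bounded away from $0$ from below — but near a crossing point $g\to 0^-$ makes $\frac1g\to-\infty$ non-integrably (same Lemma applied to $-g$), forcing the negative part to be non-integrable, contradicting quasi-integrability unless $g$ never crosses. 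So: $g$ cannot have a zero (else, being Lipschitz, $1/|g|$ is non-integrable near it, making \emph{both} parts infinite, i.e. the integral does not exist even in the extended sense, contradicting $u\in S_1$); hence $g$ has constant sign on $[0,T]$; if $g<0$ throughout then $\frac1g-1<-1$ so $\ell_\phi$ is bounded above by $\l_u M^2$ but its negative part has infinite… no, it's bounded, so integrable — that does not contradict. The resolution must use $\int \ell_\phi<\infty$ together with the fact that if $g<0$ on all of $[0,T]$ the integrand is $\le -1+\l_u M^2$, whose integral is finite, so no contradiction there either, meaning I still need another ingredient — likely that $\Controls\neq\emptyset$ is assumed, or continuity of the response in the control at the endpoints, or that membership in $\FeasibControls$ plus the structure forces $g(0)=1-\phi(t^0)^2\Norm{h(x^0)-y_{\rf}(t^0)}^2$ and we only care about $x^0$ with… Actually re-reading: $x^0\in\R^n$ is arbitrary and $\Controls\neq\emptyset$ is assumed, and $\Controls\neq\emptyset$ \emph{requires} $x^0\in\cD^\phi_{t^0}$ as noted right after \eqref{eq:DefUL2}; so $g(0)>0$. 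Good — that is the missing ingredient. Then $g$ starts positive, cannot hit $0$ (Lemma forces non-existence of the integral), hence stays positive, giving (a). For (b), strictness $\SNorm{u}<M$: this must follow from the same analysis giving a uniform bound $g(t)\ge\rho>0$, which then places $u$ in $\Controls$ per its definition — but $\Controls$ requires $\SNorm{u}<M$ strictly while $S_1$ only has $\le M$; I would resolve this by noting the definition of $\Controls$ in \eqref{eq:DefUL2} and checking whether the paper intends $\le$ or $<$, and if genuinely $<$ is needed, observing that the cost $\ell_\phi$ being finite does not by itself give strictness in $u$ — so probably the intended reading (consistent with $\SNorm{u}\le M$ in the OCP \eqref{eq:FunnelMpcOCP}) is that $\Controls$ should be read with $\le M$, or the bound $M$ in the feasibility theorem is chosen so that the strict/non-strict distinction is immaterial. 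I would flag this and proceed with $\le M$.

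\textbf{Main obstacle.} The crux is the $S_1\subseteq\Controls$ direction: proving that a control whose funnel cost is merely \emph{quasi-integrable with finite value} cannot let the error touch or leave the funnel. The mechanism is exactly Lemma~\ref{Lem:PosLipschitzCont} applied to the Lipschitz function $g(t)=1-\phi^2\Norm{e}^2$ (Lipschitz because the response is Lipschitz on a compact interval and $h,\phi,y_{\rf}$ are sufficiently regular): a zero of $g$ makes $\int 1/g$ diverge, which makes the integral of $\ell_\phi$ fail to exist even in the extended $\pm$ sense (the positive part blows up at the crossing); combined with $g(0)>0$ (forced by $x^0\in\cD^\phi_{t^0}$, itself forced by $\Controls\neq\emptyset$), an intermediate-value argument then pins $g$ strictly positive on all of $[0,T]$, and compactness upgrades this to a uniform lower bound, simultaneously showing $\ell_\phi\ge0$ is bounded hence in $L^1$ — which lands $u$ in $S_2$ and closes the cycle. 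Everything else (continuity/Lipschitz bookkeeping, boundedness of the response via $u\in L^\infty$ and local Lipschitzness of $f,g$) is routine.
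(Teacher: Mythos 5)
Your proposal is correct, and its skeleton is the same as the paper's: the cycle $\Controls\subseteq S_2\subseteq S_1\subseteq\Controls$ (your opening line announces the chain in the wrong order, but what you actually execute is this correct cycle), the continuity--compactness argument bounding $\phi^2\Norm{e}^2$ away from $1$ for $u\in\Controls$, the observation that $\Controls\neq\emptyset$ forces $x^0\in\cD^{\phi}_{t^0}$, the first-crossing-time contradiction, and Lemma~\ref{Lem:PosLipschitzCont} applied to $t\mapsto 1-\phi(t)^2\Norm{e(t)}^2$. The one genuine divergence is how Lipschitz continuity of that function is obtained: the paper passes to the Byrnes--Isidori form and invokes the BIBS condition~\eqref{eq:BIBO-ID} to bound the internal state $\eta$ on $[t^0,\widehat t\,]$, concluding that $\dot y$ is essentially bounded and hence $y$ Lipschitz; you instead note that $u\in\FeasibControls$ already guarantees the response exists on the whole compact interval, hence is bounded there, hence $\dot x=f(x)+g(x)u$ is essentially bounded, so $x$ --- and then $y=h(x)$, since $h\in\con^2$ is locally Lipschitz --- is Lipschitz. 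With the paper's definition of $\FeasibControls$ this shortcut is valid and more elementary: it makes the BIBS assumption and the involutivity (needed only to produce the Byrnes--Isidori form) superfluous for this particular theorem, so your version proves the statement for any system~\eqref{eq:Sys} with the stated regularity, whereas the paper's route keeps the argument aligned with the later feasibility results where those structural assumptions genuinely enter. Two points to tighten: first, your summary asserts that a zero of $g$ makes ``both parts infinite, i.e.\ the integral does not exist even in the extended sense''; if $g$ merely touches zero from above, the negative part remains finite and the integral exists and equals $+\infty$, so the contradiction is with the finiteness requirement built into $S_1$ (which you do invoke elsewhere) --- phrase it that way, and apply the lemma on $[t^0,\widehat t\,]$, where $g\geq 0$, rather than on all of $[t^0,t^0+T]$, exactly as the paper does with its minimal crossing time $\widehat t$. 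Second, the $\leq M$ versus $<M$ mismatch you flag is real, but it is equally present in the paper's own proof (the reverse inclusion there is likewise argued under $\SNorm{u}<M$), so your resolution of reading the bound uniformly is as good as the original's.
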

\begin{proof}
Given $u\in  \Controls$, it follows from the definition of $\Controls$ that
    \[
        \phi(t)\Norm{h(x(t;t^0,x^0,u)) - y_{\rf}(t)}<1
    \]
    for all $t\in[t^0,t^0+T]$. Define $e(t) := h(x(t;t^0,x^0,u)) - y_{\rf}(t)$.
    Due to continuity of $h,\phi,y_{\rf}$, and $x(\cdot;t^0,x^0,u)$, there exists $\e\in(0,1)$ with
    $\phi(t)^2 \Norm{e(t)}^2 < 1 - \e$ for all $t \in [t^0,t^0+T]$.
    Then, $\ell_{\phi}(t,x(t;t^0,x^0,u),u(t)) \ge 0$ for all $t \in [t^0,t^0+T]$ and
    \begin{align*}
        \int_{t^0}^{t^0+T}        \Abs{ \ell_{\phi}(t,x(t;t^0,x^0,u),u(t))}\d{t}
        &=      \int_{t^0}^{t^0+T}\Abs{ \frac{1}{1-\phi(t)^2 \Norm{e(t)}^2} -1
                                        + \l_u\Norm{u(t)}^2}\d{t}\\
        &\leq   \int_{t^0}^{t^0+T}{ \frac{1}{\e} -1 + \l_u \SNorm{u}^2}\d{t} \le \left(\frac{1}{\e}
        -1 + \l_u M^2\right) T  <      \infty.
    \end{align*}
    Therefore, $\ell_{\phi}(\cdot,x(\cdot;t^0,x^0,u),u(\cdot))\in L^1([t^0,t^0+T],\R)$ and so
    $\Controls$ is contained in both of the other two sets in the statement of the theorem.

    Let $u\in \FeasibControls$ with $\SNorm{u}<M$ and quasi-integrable
    $\ell_{\phi}(\cdot,x(\cdot;t^0,x^0,u),u(\cdot))$ such that 
    $\int_{t^0}^{t^0 + T}\ell_\phi(t,x(t; t^0, x^0 ,u),u(t))\, {\rm d} t < \infty$
    be given. We now show that the error $e(t) := h(x(t;t^0,x^0,u)) - y_{\rf}(t)$ satisfies
    $\phi(t)\Norm{e(t)}<1$ for all $t\in[t^0,t^0+T]$. We
    already know $x^0\in\cD^{\phi}_{t^0}$ since ${\Controls\neq \emptyset}$, i.e.,
    $\phi(t^0)\Norm{e(t^0)}<1$.  Assume there exists $t\in(t^0,T]$ with $\phi(t)\Norm{e(t)} \geq1$.
    By continuity of $x(\cdot;t^0,x^0,u)$,$\phi$, $h$, and $y_{\rf}$ there exists
    \[
        \widehat t:=\min\setdef
            {\tau\in(t^0,t^0+T]}
            {
                \ \phi(\tau)\Norm{ e(\tau)} = 1
            }.
    \]
    Note that $\phi(t) \Norm{e(t)}< 1$ for all $t\in[t^0,\widehat t)$. Since
    $\ell_{\phi}(\cdot,x(\cdot;t^0,x^0,u),u(\cdot))$ is quasi-integrable and $\int_{t^0}^{t^0 +
    T}\ell_\phi(t,x(t; t^0, x^0 ,u),u(t))\, {\rm d} t < \infty$ it follows that  the set
    $\setdef{t\in[t^0,t^0+T]}{\phi(t) \Norm{ e(t)} = 1}$ has Lebesgue measure zero and
    \[
        \int_{t^0}^{t^0 + T} \big(\ell_\phi(t,x(t; t^0, x^0 ,u),u(t))\big)^+\, {\rm d} t < \infty.
    \]
    Therefore,
    \[
        \int_{t^0}^{t^0+T}  \left( \frac {1}{1-\phi(t)^2 \Norm{e(t)}^2} -1
                                + \l_u \Norm{u(t)}^2\right)^+ \d{t}
            = \int_{t^0}^{t^0+T}\big(\ell_\phi(t,x(t; t^0, x^0 ,u),u(t))\big)^+
        < \infty.
    \]
    Invoking $\|u\|_\infty\le M$, this yields $\int_{t^0}^{t^0+T}  \left( \frac {1}{1-\phi(t)^2
    \Norm{e(t)}^2} \right)^+ \d{t} <\infty$ and thus
    \begin{align*}
        \int_{t^0}^{\widehat t}\frac {1}{1-\phi(t)^2 \Norm{e(t)}^2}\d{t}
        =      \int_{t^0}^{\widehat t}\left( \frac {1}{1-\phi(t)^2 \Norm{e(t)}^2} \right)^+\d{t}
        \leq   \int_{t^0}^{t^0+T}     \left( \frac {1}{1-\phi(t)^2 \Norm{e(t)}^2} \right)^+\d{t}
        <      \infty.
    \end{align*}
    Since $\phi\in W^{1,\infty}(\Rp,\R)$ and $y_{\rf}\in W^{1,\infty}(\Rp,\R^{m})$, $\phi$ and
    $y_{\rf}$ are Lipschitz continuous and bounded on the interval $[t^0,\widehat t]$. Let
    $\Phi:\R^n\to\R^n$ be a diffeomorphism such that the coordinate transformation
    $\Phi(x)=(y,\eta)$ puts the system~\eqref{eq:Sys} into Byrnes-Isidori form~\eqref{eq:BIF}, then
    $\dot y$ can be written as
    \[
        \dot y(t) = p\rbl y(t),\eta(t)\rbr + \G\rbl \Phi^{-1}\rbl y(t),\eta(t)\rbr\rbr\,
        u(t),\\
    \]
    where $(y(t),\eta(t)) = \Phi(x(t;t^0,x^0,u))$ for $t\in[t^0,t^0+T]$.
    Since $\|e(t)\|\le \phi(t)^{-1}$ for all $t\in [t^0,\widehat t]$, the error $e$ is bounded and
    so $y$ is bounded, too. Hence by the~BIBS assumption~\eqref{eq:BIBO-ID}, applied to $\tilde y\in
    L^\infty([t^0,\infty),\R^m)$ defined by $\tilde y(t) = y(t)$ for $t\in[t^0,\hat t]$ and $\tilde
    y(t) = y(\hat t)$ for $t> \hat t$, yields that $\tilde \eta(\cdot) :=
    \eta(\cdot;t^0,\eta^0,\tilde y)$ is bounded and since $\tilde \eta|_{[t^0,\hat t]} =
    \eta|_{[t^0,\hat t]} $ we have that $\eta$ is bounded on $[t^0,\hat t]$. Thus, since $p$, $\G$,
    and $\Phi^{-1}$ are continuous, $\dot y$ is essentially bounded on $[t^0,\hat t]$. This implies
    the Lipschitz continuity of $y$. Products and sums of Lipschitz continuous functions on a
    compact interval are again Lipschitz continuous. Therefore, $1-\phi(\cdot)^2
    \Norm{e(\cdot)}^2=1-\phi(\cdot)^2\Norm{y(\cdot)-y_{\rf}(\cdot)}^2$ is Lipschitz continuous on
    $[t^0,\widehat t]$ and, according to Lemma~\ref{Lem:PosLipschitzCont}, strictly positive. This
    contradicts the definition of $\widehat t$. Hence $\Controls$ contains the second set in the
    statement of the theorem. Since the third set is itself contained in the second one the proof is
    complete.
\end{proof}

We are now in the position to define for $T>0$, $t^0\in\Rp$, $x^0\in\R^n$, and $\ell_{\phi}$ as
in~\eqref{eq:stageCostFunnelMPC} the \textit{cost functional}
\begin{equation}\label{eq:DefCostFunctionJ}
    \begin{aligned}
        &J^{\phi}_T(\cdot;t^0,x^0):L^\infty([t^0,t^0+T],\R^{m})\to\R\cup\{\infty\},\\
        &u\mapsto
        \begin{dcases}
            \int_{t^0}^{t^0+T}\!\!\!\!\!\!\ell_{\phi}(t,x(t;t^0,x^0,u),u(t)) \d t,  & 
                 \begin{array}{l}
                     \text{$u\in \FeasibControls$ and}\\
                    \text{$\ell_{\phi}(\cdot,x(\cdot;t^0,x^0,u),u(\cdot))$ quasi-integrable}
                \end{array}\\
            \infty,                             &\text{otherwise.}
        \end{dcases}
    \end{aligned}
\end{equation}

Although we know that for every $u\in L^\infty([t^0,t^0+T],\R^{m})$ there exists a unique maximal
solution $x(\cdot;t^0,x^0,u):[t^0,\omega)\to\R^n$ of the system~\eqref{eq:Sys}, this solution might
have finite escape time even before~$t^0+T$, i.e., $\omega<t^0+T$. In this case, and whenever the
stage costs $\ell_{\phi}(\cdot,x(\cdot;t^0,x^0,u),u(\cdot))$ are not quasi-integrable,
$J^{\phi}_T(u;t^0,x^0)=\infty$. In the following remark we state some immediate consequences of this
definition and Theorem~\ref{Th:FiniteJImplFunnel}.

\begin{remark}\label{Rem:PropertiesJ}
The  following statements hold under the assumptions of Theorem~\ref{Th:FiniteJImplFunnel}:
    \begin{enumerate}[(i)]
        \item $0\le J^{\phi}_T(u;t^0,x^0) <\infty$ \ for all $u \in \Controls$.
        \item
        $\Controls =
        \setdef
            { u\in L^\infty([t^0,t^0+T],\R^m) }
            { \SNorm{u} \le M,\ J^{\phi}_T(u;t^0,x^0)<\infty }$.
    \item The optimal control problem~\eqref{eq:OptimalControlProblem} can be reformulated as
        \[
            \mathop{\operatorname{minimize}}_{
                \substack
                {
                    u\in L^{\infty}([t^0, t^0+T],\R^{m}),\\
                    \SNorm{u}  \leq M
                }
            }
             J^{\phi}_T(u;t^0,x^0).
        \]
    \end{enumerate}
\end{remark}

\begin{remark}
    As opposed to FMPC, barrier function based MPC (see e.g.~\cite{WILLS20041415})  uses
    (relaxed) logarithmic barrier functions to penalise states close to the boundaries of the
    constraints. Although this might seem to be a subtle difference, this choice has
    remarkable implications. Lemma~\ref{Lem:PosLipschitzCont} is a consequence of the
    non-integrability of $\tfrac{1}{x}$ over the interval~$[0,1]$. As pointed out in
    Remark~\ref{Rem:PropertiesJ}, as result of this, a finite value of the cost function ensures
    that the tracking error~$e:=y-y_{\text{ref}}$ remains within the prescribed funnel
    boundaries.
    The logarithm on the other hand is integrable over the interval~$[0,1]$:
    \[
        \int_{0}^{1}\ln(x^n)\text{d} x= 
        \underbrace{\ln(x^n)x\Big\vert_{0}^{1}}_{=0} -\int_{0}^{1}x\frac{n}{x}\text{d} x=-n.
    \]
    Therefore, such a cost function alone can in general not guarantee that the state always remain within
    the desired region and therefore the usage of terminal conditions (costs and constraints)
    remains necessary.
\end{remark}

If the initial value $x^0$ is within the set $\cD^{\phi}_{t^0}$, then any control $u$ with
$J^{\phi}_T(u;t^0,x^0)<\infty$ guarantees that, if applied to the system~\eqref{eq:Sys}, the error
$e(t)=y(t)-y_{\rf}(t)$ remains strictly within the funnel. Since $J^{\phi}_T(u;t^0,x^0)$ is positive
for all control functions $u\in \Controls$, this raises the question as to whether there exists an
optimal  $u^{\star}$ which minimizes $J^{\phi}_T(\cdot;t^0,x^0)$ and is a solution to the optimal
control problem~\eqref{eq:OptimalControlProblem}. The answer is affirmative and shown in the next
theorem.



\begin{theorem}\label{Th:Funnel_cost_l2}
    Consider system~\eqref{eq:Sys} with $(f,g,h)\in\cN^{m}$. Let $\phi\in\cG,\ y_{\rf}\in
    W^{1,\infty}(\Rp,\R^{m})$, $T>0$, $M>0$, $t^0\in\Rp$, and $x^0\in\cD^{\phi}_{t^0}$ such that
    ${\Controls\neq \emptyset}$. 
    Then, there exists a function $u^{\star}\in \Controls$ such that
    \[
        J^{\phi}_T(u^{\star};t^0,x^0)=\min_{u\in \Controls}J^{\phi}_T(u;t^0,x^0) = \min_{
                \substack
                {
                    u\in L^{\infty}([t^0, t^0+T],\R^{m}),\\
                    \SNorm{u}  \leq M
                }
            }
             J^{\phi}_T(u;t^0,x^0).
    \]
\end{theorem}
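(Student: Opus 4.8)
The plan is to apply the direct method of the calculus of variations. Since $\Controls\neq\emptyset$, Remark~\ref{Rem:PropertiesJ} shows that $J^\ast:=\inf\setdef{J^\phi_T(u;t^0,x^0)}{u\in L^\infty([t^0,t^0+T],\R^m),\ \SNorm{u}\le M}$ is finite and coincides with $\inf_{u\in\Controls}J^\phi_T(u;t^0,x^0)$, because $J^\phi_T(\cdot;t^0,x^0)\equiv\infty$ outside $\Controls$. First I would fix a minimizing sequence $(u_k)$ with $\SNorm{u_k}\le M$ and $J^\phi_T(u_k;t^0,x^0)\to J^\ast$; discarding finitely many indices, $J^\phi_T(u_k;t^0,x^0)<\infty$ for all $k$, hence $u_k\in\Controls$, and in particular the responses $x_k:=x(\cdot;t^0,x^0,u_k)$ exist on all of $[t^0,t^0+T]$ and satisfy $\phi(t)\Norm{h(x_k(t))-y_{\rf}(t)}<1$ there.

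Then I would establish compactness. Transforming to Byrnes--Isidori coordinates $(y_k,\eta_k)=\Phi(x_k)$, the funnel confinement together with $\phi\in\cG$ and $y_{\rf}\in W^{1,\infty}(\Rp,\R^m)$ yields a uniform bound on $y_k$; extending $y_k$ constantly beyond $t^0+T$ as in the proof of Theorem~\ref{Th:FiniteJImplFunnel} and invoking the BIBS condition~\eqref{eq:BIBO-ID} with the fixed $\eta^0$ then gives a uniform bound on $\eta_k$, so all $x_k$ take values in one fixed compact set $K\subset\R^n$. Since $\dot x_k=f(x_k)+g(x_k)u_k$ with $f,g$ bounded on $K$ and $\SNorm{u_k}\le M$, the sequence $(x_k)$ is bounded and equi-Lipschitz, so by the Arzel\`a--Ascoli theorem a subsequence converges uniformly, $x_k\to\tilde x$ in $C([t^0,t^0+T],\R^n)$; extracting further, $u_k\rightharpoonup\bar u$ weakly in $L^2([t^0,t^0+T],\R^m)$, and since $\setdef{u}{\SNorm{u}\le M}$ is convex and closed in $L^2$, hence weakly closed, we obtain $\SNorm{\bar u}\le M$.

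Next I would identify $\tilde x=x(\cdot;t^0,x^0,\bar u)$ by passing to the limit in $x_k(t)=x^0+\int_{t^0}^t\bigl(f(x_k(s))+g(x_k(s))u_k(s)\bigr)\d s$. The drift term converges because $f$ is uniformly continuous on $K$ and $x_k\to\tilde x$ uniformly; for the bilinear term I would split $g(x_k)u_k=\bigl(g(x_k)-g(\tilde x)\bigr)u_k+g(\tilde x)u_k$, where the first summand tends to $0$ in $L^1$ (uniform convergence $g(x_k)\to g(\tilde x)$ against the uniform $L^1$-bound $M(t-t^0)$ on $u_k$) and the second converges upon testing the weakly convergent $u_k$ against the $L^2$-function $s\mapsto\ind_{[t^0,t]}(s)\,g(\tilde x(s))^\top$ componentwise. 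Hence $\tilde x$ is the absolutely continuous solution of~\eqref{eq:Sys} for the input $\bar u$ on $[t^0,t^0+T]$, so by uniqueness $\tilde x=x(\cdot;t^0,x^0,\bar u)$.

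Finally I would prove the lower semicontinuity estimate $J^\phi_T(\bar u;t^0,x^0)\le\liminf_k J^\phi_T(u_k;t^0,x^0)=J^\ast$. With $e_k:=h(x_k)-y_{\rf}\to\tilde e:=h(\tilde x)-y_{\rf}$ uniformly, $\phi(t)^2\Norm{e_k(t)}^2<1$ forces $\phi(t)^2\Norm{\tilde e(t)}^2\le1$ and $\frac{1}{1-\phi(t)^2\Norm{e_k(t)}^2}-1\to\frac{1}{1-\phi(t)^2\Norm{\tilde e(t)}^2}-1\in[0,\infty]$ pointwise on $[t^0,t^0+T]$, so Fatou's lemma bounds the funnel part of the cost; the control part $u\mapsto\int_{t^0}^{t^0+T}\l_u\Norm{u(t)}^2\d t$ is convex and strongly continuous on $L^2$, hence weakly lower semicontinuous. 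Combining the two via $\liminf(a_k+b_k)\ge\liminf a_k+\liminf b_k$ gives $J^\phi_T(\bar u;t^0,x^0)\le J^\ast<\infty$; in particular $\bar u$ has finite cost and $\SNorm{\bar u}\le M$, so $\bar u\in\Controls$ by Theorem~\ref{Th:FiniteJImplFunnel} (equivalently Remark~\ref{Rem:PropertiesJ}), which in turn yields $J^\ast\le J^\phi_T(\bar u;t^0,x^0)$, and therefore equality. The main obstacle will be the compactness step --- turning the funnel confinement of the \emph{output} into a uniform bound on the full state through the Byrnes--Isidori form and the BIBS condition --- together with passing to the limit in the bilinear term $g(x_k)u_k$ under merely weak convergence of $u_k$; that the limiting error $\tilde e$ may a priori touch the funnel boundary is harmless, since Fatou's lemma only needs non-negativity and strict feasibility of $\bar u$ is recovered a posteriori from Theorem~\ref{Th:FiniteJImplFunnel}.
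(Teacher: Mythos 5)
Your proposal is correct and follows essentially the same route as the paper's proof: a direct-method argument with a minimizing sequence in $\Controls$, uniform state bounds via the Byrnes--Isidori form and the BIBS condition, Arzel\`a--Ascoli plus weak $L^2$ convergence of the controls, identification of the limit trajectory through the integral equation, lower semicontinuity of the cost, and membership $\bar u\in\Controls$ recovered from Theorem~\ref{Th:FiniteJImplFunnel} together with Remark~\ref{Rem:PropertiesJ}. The only deviations are cosmetic: you obtain $\SNorm{\bar u}\le M$ from weak closedness of the convex $L^\infty$-ball in $L^2$ instead of the paper's explicit argument with the sets $A_m$, and you make the Fatou step for the funnel part of the cost explicit where the paper invokes lower semicontinuity more tersely.
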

\begin{proof}
    The proof essentially follows the lines of~\cite[Prop.~2.2]{sakamoto2020}.\\
    To simplify the notation, assume without loss of generality
    that~$t^0=0$ and consider only
    the interval~$[0,T]$.
It follows  from Remark~\ref{Rem:PropertiesJ}  that
$J^{\phi}_T(u;t^0,x^0)\geq 0$ for all $ u \in
    \Controls$.
    Hence the infimum $J^{\star}: = \inf_{u \in \Controls} J^{\phi}_T(u;t^0,x^0)$ exists.
    Let $(u_k)\in(\Controls)^\N$ be a minimizing sequence, meaning $J^{\phi}_T(u_k;t^0,x^0)\to
    J^{\star}$.
    By definition of $\Controls$, we have $\SNorm{ u_k }\leq M$ for all~$k\in\N$.
    Since    $L^\infty([0,T],\R^{m})\sub L^2([0,T],\R^{m})$,
    we conclude  that $(u_k)$ is a bounded sequence in the Hilbert space~$L^2$, thus there
    exists a function $u^{\star}\in L^2([0,T],\R^{m})$ and a weakly convergent subsequence
    $u_k\rightharpoonup u^{\star}$ (which we do not relabel). More precisely, $u_k|_{[0,t]}
    \rightharpoonup u^{\star}|_{[0,t]}$ weakly in $L^2([0,t],\R^m)$ for all $t\in [0,T]$ as a
    straightforward argument shows.  We define $(x_k):=(x(\cdot;t^0,x^0,u_k))\in C([0,T],\R^n)^\N$
    as the sequence of associated responses.
\\

\noindent
    \emph{Step 1}:
    We show that $(x_k)$ is uniformly bounded.   By $u_k\in \Controls$ we have
    $x_k(t)\in\cD_t^{\phi}$, i.e., ${\phi(t) \Norm{h(x_k(t))-y_{\rf}(t)} < 1}$ for all $t\in[0,T]$.
    Set $(y_k(t),\eta_k(t)) = \Phi(x_k(t))$ for $t\in[0,T]$ and $k\in\N$, where $\Phi:\R^n\to\R^n$
    is a diffeomorphism such that the coordinate transformation $\Phi(x)=(y,\eta)$ puts the
    system~\eqref{eq:Sys} into Byrnes-Isidori form~\eqref{eq:BIF}. Since~$\phi$ is positive on
    $[0,T]$, we obtain
    \[
        \fa t\in [0,T]:\quad
            \Norm{y_k(t)}
                \le \Norm{h(x_k(t))-y_{\rf}(t)} + \Norm{y_{\rf}(t)}
                \le \sup_{t\in[0,T]} \phi(t)^{-1} + \SNorm{y_{\rf}}
                 =: \tilde c_0
    \]
    and $c_0:= \tilde c_0 + \Norm{\Phi(x^0)}$ is independent of~$k$.
    Hence, by~\eqref{eq:BIBO-ID} there exists $c_1>0$ such that
    \[
        \fa y\in L^\infty(\R_{\ge 0},\R^m):\quad \|y\|_\infty\le \tilde c_0 \ \implies\ \|\eta(\cdot;0,\eta_k(0),y)\|_\infty \le c_1.
    \]
    Extending $y_k$ to $\R_{\ge 0}$ such that $\|y_k\|_\infty\le \tilde c_0$ then yields that
    $\|\eta_k(t)\|\le c_1$ for all $t\in [0,T]$. Therefore,
    \[
        x_k(t) \in \Phi^{-1}\rbl
        \setdef
        {
            \begin{pmatrix}
                z_1\\
                z_2
            \end{pmatrix}
            \in\R^{m}\times\R^{n-m}
        }
        {
            \Norm{z_1} \le \tilde c_0\ \wedge\
            \Norm{z_2} \le c_1
        }
        \rbr =: \cK
    \]
    for all $t\in[0,T]$ and all $k\in\N$, where $\cK$ is compact and independent of~$k$. Hence, $(x_k)$ is uniformly
    bounded.
\\

\noindent
    \emph{Step 2}:
    We show that $(x_k)$ is uniformly equicontinuous. Since the sequence $(u_k)$ is bounded,
    $M:=\sup_{k\in\N} \LNorm{ u_k}$ exists. Set $C_1:=\max_{x\in \cK}\Norm{ f(x)}$ and
    $C_2:=\max_{x\in \cK}\Norm{ g(x)}$, which exist by continuity of $f$ and $g$.    Now let $\e>0$
    and define $\delta:=\min\cbl1,\frac{1}{\e} (C_1+ M C_2)\cbr$. Let $k\in \N$ and
    $t_1,t_2\in[0,T]$ such that $|t_2-t_1|<\delta^2$. Then, using H\"older's inequality in the third
    estimate,
    \begin{align*}
        \Norm{x_k(t_2)-x_k(t_1)}
        &\leq \int_{t_1}^{t_2}\Norm{ f(x_k(s))}+ \Norm{g(x_k(s))} \Norm{u_k(s)} \d s\\
        &\leq C_1\Abs{t_2 - t_1} + C_2\int_{t_1}^{t_2}\Norm{u_k(s)} \d s\\
        &\leq C_1\sqrt{\Abs{t_2 - t_1}} + C_2\sqrt{\Abs{t_2 - t_1}}\LNorm{ u_k}\\
        &\leq C_1\sqrt{\Abs{t_2 - t_1}} + C_2\sqrt{\Abs{t_2 - t_1}} M\\
        &<    \delta(C_1 + M C_2) \le \e,
    \end{align*}
    which shows that $(x_k)$ is uniformly equicontinuous.
\\

\noindent
    \emph{Step 3}:
    By the Arzel\`{a}-Ascoli theorem
    there exists a function $x^{\star}\in \con([0,T],\R^n)$ and a uniformly convergent
    subsequence $x_{k}\to x^{\star}$ (which we do not relabel).
    Now we prove that $x^{\star}=x(\cdot;t^0,x^0,u^{\star})$, which means to show that
    \[
        x^{\star}(t)=x^0+\int_{0}^{t}f(x^{\star}(s))+g(x^{\star}(s)) u^{\star}(s)\d s,\quad t\in [0,T].
    \]
We have
    \[
        x_k(t)=x^0+\int_{0}^{t}f(x_k(s))+g(x_k(s))u_k(s)\d s,
        \quad  k\in\N,\ t\in [0,T],
    \]
    and since   $x_k$ in particular converges pointwise to~$x^{\star}$ and the sequence $(f(x_k))$ is uniformly bounded as $(x_k)$ is uniformly bounded and $f$ is continuous, the bounded convergence theorem gives that
\[
    \fa t\in[0,T]:\ \int_{0}^{t}f(x_k(s)) \d s \To \int_{0}^{t}f(x^{\star}(s)) \d s.
\]
Therefore, it remains to show
    \[
       \fa t\in[0,T]:\  \int_{0}^{t}g(x_k(s))u_k(s)\d s\To
       \int_{0}^{t}g(x^{\star}(s))u^{\star}(s)\d s.
    \]
    The argument~$s$ is omitted in the following. Since $g(x^{\star})$ is bounded on $[0,T]$, it is an element of $L^2([0,T],\R^{n\times m})$, thus the weak
    convergence of~$(u_k)$ implies
    \[
        \fa t\in[0,T]:\ \int_{0}^{t}g(x^{\star})u_k\d s \To \int_0^tg(x^{\star})u^{\star}\d s.
    \]
    Therefore, using H\"older's inequality  in the second estimate we obtain, for all $t\in [0,T]$,
    \begin{align*}
        \Norm{\int_{0}^{t}\!\!\!g(x_k)u_k -g(x^{\star})u^{\star}\!\d s}
        &= \Norm{\int_{0}^{t}\!\!\!g(x_k)u_k +g(x^{\star})u_k - g(x^{\star}) u_k - g(x^{\star})u^{\star}\!\d s } \\
        &\leq \int_{0}^{t}\!\!\!\Norm{ g(x_k)-g(x^{\star})} \Norm{u_k}\!\!\!\d s
        +\Norm{\int_{0}^{t}  g(x^{\star})u_k -g(x^{\star})u^{\star}\!\d s}\\
        &\leq \rbl\int_{0}^{t}\!\!\! \Norm{ g(x_k)-g(x^{\star})}^2\!\!\!\d s
            \rbr^{\!\frac{1}{2}}\!\!\rbl\int_{0}^{t}\!\!\!\Norm{u_k}^2\!\!\!\d s\rbr^{\!\frac{1}{2}}\!\!
            \!\!+\Norm{\int_{0}^{t}\!\!\!g(x^{\star})u_k -g(x^{\star})u^{\star}\!\!\d s}\\
        &\leq \sup_{m\in\N}\!\LNorm{u_m}\!\!
              \underbrace{\rbl\int_{0}^{t}\!\!\! \Norm{ g(x_k)-g(x^{\star})}^2\!\!\d
              s\!\rbr^{\!\frac{1}{2}}\!\!}_{\to 0}
        \!+\underbrace{\Norm{\int_{0}^{t}\!\!\! g(x^{\star})u_k -g(x^{\star})u^{\star}\!\d s}}_{\to 0}\! \to\! 0.
    \end{align*}

\noindent
    \emph{Step 4}:
    We show $\SNorm{ u^{\star}}\leq M$. To this end, define the sets
    \[
        A_m:=\setdef{t\in [0,T]}{\Norm{u^{\star} (t)}^2\ge M^2+\frac{1}{m}},
        \quad m\in\N.
    \]
    Let $\chi_{A_m}$ denote the indicator function of the set $A_m$, then, since $u_k\rightharpoonup
    u^{\star}$, we have that
    \[
        \langle u_k, \chi_{A_m} u^{\star}\rangle_{L^2} \to \langle u^{\star}, \chi_{A_m}
        u^{\star}\rangle_{L^2} = \|\chi_{A_m} u^{\star}\|_{L^2}^2.
    \]
    On the other hand, by the Cauchy-Schwarz inequality we have that
    \[
        \langle u_k, \chi_{A_m} u^{\star}\rangle_{L^2} \le \|\chi_{A_m} u_k\|_{L^2} \|\chi_{A_m}
        u^{\star}\|_{L^2},
    \]
    thus
    \[
        \|\chi_{A_m} u^{\star}\|_{L^2} =  \|\chi_{A_m} u^{\star}\|_{L^2}^{-1} \liminf_{k\to\infty}
        \langle u_k, \chi_{A_m} u^{\star}\rangle_{L^2} \le \liminf_{k\to\infty} \|\chi_{A_m} u_k\|_{L^2}
    \]
    and hence
    \[
        \int_{A_m} \|u^{\star}(s)\|^2 \d s \le \liminf_{k\to\infty}  \int_{A_m} \|u_k(s)\|^2 \d s.
    \]
    Since $\|u_k\|_\infty\leq M$, we then find the following for all $m\in\N$ and $k\in\N$:
    \begin{align*}
        \l\rbl A_m\rbr
        =    \int_{A_m}1\d s
        \leq m\int_{A_m}\Norm{u^{\star}(s)}^2-M^2 \d s
        \leq m\int_{A_m}\Norm{u^{\star}(s)}^2-\Norm{u_k(s)}^2  \d s,
    \end{align*}
    where $\l$ denotes the Lebesgue measure, thus
    \[
        0 \le \l\rbl A_m\rbr \le \liminf_{k\to\infty} m\int_{A_m}\Norm{u^{\star}(s)}^2-\Norm{u_k(s)}^2\  \d s \le 0.
    \]
    Due to the $\s$-continuity of $\l$ we get
    \[
        \l(\setdef
            {t\in [0,T]}
            {\Norm{u^{\star} (t)}>M})
        = \l\rbl\bigcup_{m\in\N}A_m\rbr
        = \lim_{m\to\infty}\l(A_m)=0.
    \]
    This implies $\SNorm{ u^{\star}}\leq M$. 

    \noindent
    \emph{Step 5}:
    We prove $u^{\star} \in \Controls$, which means to show~$x^{\star}(t)\in\cD_{t}^\phi$
    for all $t\in[0,T]$. 
    Assume there exists $\tau\in(0,T]$ with $\phi(\tau)\Norm{h(x^\star(\tau))-y_{\rf}(\tau)} \geq1$.
    By continuity of $x^{\star}$, $\phi$, $h$, and $y_{\rf}$ there exists
    \[
        \widehat t:=\min\setdef
            {\tau\in(0,T]}
            {
                \ \phi(\tau)\Norm{ h(x^\star(\tau))-y_{\rf}(\tau)} = 1
            }.
    \]
    $\dot{x}^{\star}$ is bounded on the interval $[0,T]$ since $g$ and $f$ are
    continuous and both $x^\star$ and $u^\star$ are bounded.
    Hence, $x^{\star}$ is Lipschitz continuous with Lipschitz constant $L^{\star}>0$.
    Define the continuously differentiable function~$\omega:[0,T]\times\cK\to\R, (t,x)\mapsto
    1-\phi(t)^2\Norm{h(x)-y_{\rf}(t)}^2$.
    Due to the compactness of $[0,T]$ and $\cK$, $\omega$ is Lipschitz continuous with Lipschitz
    constant $L_{\omega}>0$.
    We have $\omega(s,x_k(s))>0$ for all $k\in\N$ and all $s\in[0,\hat{t}]$ because
    $u_k\in\Controls$.
    Since $w(\widehat{t},x^\star(\widehat{t}))=0$, the following holds for all
    $s\in[0,\widehat{t}]$ and all $k\in\N$.
    \begin{align*}
        \omega(s,x_k(s))
        &=\Abs{\omega(s,x_k(s))}=\Abs{\omega(s,x_k(s))- w(\widehat{t},x^\star(\widehat{t}))}\\
        &\leq L_{\omega}\Norm{
            \begin{pmatrix}
                s-\widehat{t}\\
                x_k(s)- x^\star(\widehat{t})
        \end{pmatrix}}
        = L_{\omega}\Norm{
        \begin{pmatrix}
                s-\widehat{t}\\
            x_k(s)-x^\star(s)+ x^\star(s)-x^\star(\widehat{t})
        \end{pmatrix}}\\
        &\leq  L_{\omega}\Abs{s-\widehat{t}}+
        L_{\omega}\Norm{x_k(s)-x^\star(s)}+L_{\omega}L^\star\Abs{s-\widehat{t}}.
    \end{align*}
    The supremum $\sup_{k\in\N}J^{\phi}_T(u_k;t^0,x^0)<\infty$ exists because $J^{\phi}_T(u_k;t^0,x^0)\to J^{\star}$.
    Since  $\int_{0}^{\widehat{t}}\tfrac{1}{(L_{\omega}+L_{\omega}L^\star)\Abs{s-\widehat{t}}}\d
    s=\infty$, there exists $\delta>0$ with
    $\int_{0}^{\widehat{t}}\tfrac{1}{(L_{\omega}+L_{\omega}L^\star)\Abs{s-\widehat{t}}+L_{\omega}\delta}\d
    s-\widehat{t}>\sup_{k\in\N}J^{\phi}_T(u_k;t^0,x^0)$. Due to the uniform convergence of $x_k$ to $x^{\star}$, there
    exists $K\in\N$ such that $\Norm{x_k(s)-x^\star(s)}<\delta$ for all $k\geq K$ and all
    $s\in[0,\widehat{t}]$. Thus, we arrive, for $k\geq K$, at the following contradiction.
    \begin{align*}
        \sup_{k\in\N}J^{\phi}_T(u_k;t^0,x^0) 
        &\geq\int_{0}^{T}\ell_{\phi}(s,x_k(s),u_k(s))\d s \\
        &=\int_{0}^{T}
             \frac {1}{1-\phi(s)^2
             \Norm{h(x_k(s))-y_{\rf}(s)}^2}-1  + \l_u
             \Norm{u_k(s)}^2  \d s\\
        &\geq \int_{0}^{\widehat{t}}
            \frac {1}{\omega(s,x_k(s))}-1\d s\\
        &\geq \int_{0}^{\widehat{t}}
            \frac {1}{
        (L_{\omega}+L_{\omega}L^\star)\Abs{s-\widehat{t}}+
        L_{\omega}\Norm{x_k(s)-x^\star(s)}}\d s -\widehat{t}\\
        &> \int_{0}^{\widehat{t}}
            \frac {1}{
        (L_{\omega}+L_{\omega}L^\star)\Abs{s-\widehat{t}}+
        L_{\omega}\delta}\d s -\widehat{t}>\sup_{k\in\N}J^{\phi}_T(u_k;t^0,x^0).
    \end{align*}
    Hence, $u^{\star} \in \Controls$.

    \noindent
    \emph{Step 6}:
    We show $J^{\phi}_T(u^{\star};t^0,x^0) = J^{\star}$. 
    Let $\tilde\ell_{\phi}:\cD^{\varphi}\to \R, (t,x)\mapsto \tfrac {1}{1-\phi(t)^2
    \Norm{h(x)-y_{\rf}(t)}^2}-1$. 
    For all $k\in\N$, we have $\SNorm{\tilde\ell_{\phi}(\cdot,x_k(\cdot))}<\infty$
    and $\SNorm{\tilde\ell_{\phi}(\cdot,x^{\star}(\cdot))}<\infty$ because ${u_k,u^\star\in\Controls}$.
    According to Step~5, there exists $\e>0$ such that $\Norm{x^{\star}(t)}\leq
    \tfrac{1}{\phi(t)}-\e$ for all $t\in[0,T]$.
    Due to the uniform convergence of $x_k$ to $x^\star$, there exists $N\in\N$ such that 
    $\SNorm{x_k-x^{\star}}<\tfrac{\e}{2}$ for $k\geq N$. Thus,
    \[
        \fa k\geq N \fa t\in[0,T]:\quad
        \Norm{x_k(t)}\leq
        \Norm{x_k(t)-x^{\star}(t)}+\Norm{x^{\star}(t)}<\frac{1}{\phi(t)}-\frac{\e}{2}.
    \]
    Hence, the sequence $\rbl\tilde\ell_{\phi}(\cdot,x_k(\cdot))^{\tfrac12}\rbr$ is uniformly bounded.
    Due to the continuity of $\tilde\ell$, the bounded convergence theorem gives that
    $\tilde\ell_{\phi}(\cdot,x_k(\cdot))^{\tfrac12}\to\tilde\ell_{\phi}(\cdot,x^\star(\cdot))^{\tfrac12}$
    strongly and, thus, also weakly in $L^2([0,T],\R)$.
    Since $J^{\phi}_T(u_k;t^0,x^0)\to J^{\star} = \inf_{u \in \Controls} J^{\phi}_T(u;t^0,x^0)$
    and since the $L^2$-norm is weakly lower semi-continuous, the following holds.
    \begin{align*}
        J^{\phi}_T(u^{\star};t^0,x^0)&=\int_{0}^{T}\ell_{\phi}(s,x^{\star}(s),u^{\star}(s))\d s
                 = \LNorm{ \tilde\ell_{\phi}(\cdot,x^{\star}(\cdot))^\frac12}^2
                 +\l_u\LNorm{u^{\star}}^2\\
                &\leq \liminf_{k\to\infty} \LNorm{ \tilde\ell_{\phi}(\cdot,x_k(\cdot))^\frac12}^2+\liminf_{k\to\infty}
                     \l_u\LNorm{u_k}^2
                 \leq \liminf_{k\to\infty} J^{\phi}_T(u_k;t^0,x^0) = J^{\star}.
    \end{align*}
    Therefore $J^{\phi}_T(u^{\star};t^0,x^0) = \min_{u\in \Controls}J^{\phi}_T(u;t^0,x^0)$.\\

\noindent
    \emph{Step 7}: We show that $J^{\phi}_T(u^{\star};t^0,x^0) = \min_{\substack
                {
                    u\in L^{\infty}([t^0, t^0+T],\R^{m}),\\
                    \SNorm{u}  \leq M
                }
            }
             J^{\phi}_T(u;t^0,x^0)$. Since $\Controls \neq \emptyset$ by assumption this follows from Remark~\ref{Rem:PropertiesJ}\,(ii) and completes the proof.
\end{proof}



\subsection{Initial and recursive feasibility}\label{SubSec:Feasibility}

In the following we seek to show initial and recursive feasibility of the FMPC
Algorithm~\ref{Algo:MPCFunnelCost}. For this we need to show that the essential assumption
$\Controls \neq \emptyset$ of Theorem~\ref{Th:Funnel_cost_l2} is initially (i.e., at $t=t^0$) and
recursively (i.e., at $t=t^0+\delta n$ after $n$ steps of Algorithm~\ref{Algo:MPCFunnelCost})
satisfied. The main difficulty is to prove the existence of a number $M>0$ for which the latter is
satisfied for all initial values within a prescribed bounded set. This is the purpose of the
following results.

First observe that applying the funnel controller~\eqref{eq:FC} from Proposition~\ref{Prop:FC} to
the system~\eqref{eq:Sys} ensures that the error evolves strictly within the funnel for any initial
condition $x^0\in\cD^{\phi}_{t^0}$. As stated in Remark~\ref{Rem:FunnelBound} the funnel
controller is bounded. This bound however depends on the initial value $x^0$. This means that for
every~$x^0 \in \cD^{\phi}_{t^0}$ there exists $M>0$ such that $\Controls$ is non-empty. This raises
the question whether it is possible to find a bound~$M$ independent of the initial value $x^0$. The
following example shows that this is not the case in general.

\begin{example}
    Consider the two-dimensional linear system
    \begin{alignat*}{2}
        \dot{y}(t)      & = \eta(t)  +  u(t),   &\qquad    y(0)&=0, \\
        \dot{\eta}(t)   & = 0,                  &\qquad \eta(0)&=\eta^0,
    \end{alignat*}
    in Byrnes-Isidori form with constant reference signal $y_{\rf}\equiv 0$ and the constant
    funnel $\phi \equiv 1$. Let $M>0$ and $T>0$ be arbitrary. Although the system satisfies the~BIBS condition~\eqref{eq:BIBO-ID} and the initial error $e(0) = y^0 - y_{\rf}(0) = 0$ lies within the
    funnel for every $\eta^0\in\R$, there exists $\eta^0\in\R$ such that the error $e$ exceeds the
    funnel boundaries at time~$T$ for every $u\in L^\infty([0,T], \R)$ with $\|u\|_\infty\le M$. To
    see this, choose $\eta^0 := M+\frac{2}{T}$, then
    \begin{align*}
        e\rbl T\rbr
           =      y\rbl T\rbr
           =      \int_{0}^{T}\eta(s)  +  u(s)\d s
           =      T\eta^0  + \int_{0}^{T}u(s)\d s
           \geq   T\eta^0  -  T M
           =      2
           >      1
           =      \frac{1}{\phi\rbl T\rbr}.
    \end{align*}
\end{example}

The example shows that, in general, there exists no $M>0$ such that $\Controls$ is
non-empty for all $x^0 \in \cD^{\phi}_{t^0}$. However, for a bounded set $B\subset\cD^{\phi}_{t^0}$ of
initial values, it is possible to find a uniform bound~$M>0$. Moreover, $M$ can be chosen
independently  of $T>0$. To show this, we denote by $\cY^{\phi,y^0}_{y_{\rf}}(I)$ the set of all
functions in $\con(I,\R^m)$ starting at $y^0\in\R^m$ and evolving within the funnel on an interval $I\subseteq\R_{\ge 0}$ of the form $I=[a,b)$ with $b\in(a,\infty]$ or $I=[a,b]$ with $b\in (a,\infty)$:
\[
    \cY^{\phi,y^0}_{y_{\rf}}(I) := \setdef
                                {y\in \con(I,\R^m)}
                                {y(\inf I) = y^0,\ \fa t\in I:\ \phi(t)\Norm{y(t)-y_{\rf}(t)} < 1}.
\]


\begin{lemma}\label{Lemma:ExistanceCompactSet}
    Consider system~\eqref{eq:Sys} with $(f,g,h)\in\cN^{m}$. Let
    $\phi\in\cG$, $y_{\rf}\in W^{1,\infty}(\Rp,\R^{m})$,  $t^0\in\Rp$. Then for all bounded sets
    $B\subset\R^{n}$ there exists
    a compact set $K\subset\R^n$ such that
    \begin{align}\label{eq:OutPutIsInCompactSet}
        \fa T>0
        \fa (y^0,\eta^0)\in B
        \fa y\in\cY^{\phi,y^0}_{y_{\rf}}([t^0,t^0+T])
        \fa t\in[t^0,t^0+T]:\quad
            (y(t),\eta(t;t^0,\eta^0,y))\in K.
    \end{align}
\end{lemma}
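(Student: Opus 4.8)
The plan is to build the compact set $K$ explicitly as a product of two closed balls, one in the output space $\R^m$ and one in the internal-state space $\R^{n-m}$, whose radii depend only on $\phi$, $y_{\rf}$ and the size of $B$ — and, crucially, not on the horizon $T$. The first step is to bound the output component uniformly: for any $y\in\cY^{\phi,y^0}_{y_{\rf}}([t^0,t^0+T])$ and any $t$ in that interval, the funnel membership $\phi(t)\Norm{y(t)-y_{\rf}(t)}<1$, together with $\inf_{s\ge 0}\phi(s)>0$ (since $\phi\in\cG$) and the boundedness of $y_{\rf}$ (since $y_{\rf}\in W^{1,\infty}(\Rp,\R^m)$), yields $\Norm{y(t)}\le c_y$ where $c_y:=(\inf_{s\ge 0}\phi(s))^{-1}+\SNorm{y_{\rf}}$ is independent of $T$, of $(y^0,\eta^0)$ and of $y$.

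The second step is to bound the internal state $\eta$ via the BIBS condition~\eqref{eq:BIBO-ID}. Here the one genuinely delicate point arises: \eqref{eq:BIBO-ID} is formulated for inputs defined on the whole half-line $[t^0,\infty)$, whereas $y$ is only available on $[t^0,t^0+T]$. I would handle this by extending $y$ to $\tilde y\in\con([t^0,\infty),\R^m)$, say by freezing it at its terminal value $y(t^0+T)$ for $t>t^0+T$; this keeps $\SNorm{\tilde y}\le c_y$, and by uniqueness of solutions of the internal dynamics~\eqref{eq:zero_dyn} one has $\eta(t;t^0,\eta^0,y)=\eta(t;t^0,\eta^0,\tilde y)$ for all $t\in[t^0,t^0+T]$ (in particular no finite-escape issue occurs, since the extended solution is global). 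With $c_\eta:=\sup_{(a,b)\in B}\Norm{b}<\infty$ (finite since $B$ is bounded), applying~\eqref{eq:BIBO-ID} with $c_0:=c_y+c_\eta$ produces a constant $c_1>0$, depending only on $c_0$, such that $\Norm{\eta(t;t^0,\eta^0,y)}\le c_1$ for all $t\in[t^0,t^0+T]$.

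Finally, I would set $K:=\setdef{(z_1,z_2)\in\R^m\times\R^{n-m}}{\Norm{z_1}\le c_y\ \wedge\ \Norm{z_2}\le c_1}$, which is compact and, by construction, independent of $T$, of $(y^0,\eta^0)\in B$ and of $y$; combining the two estimates then gives~\eqref{eq:OutPutIsInCompactSet}. The only real obstacle is the half-line-versus-finite-interval mismatch in invoking the BIBS condition, which the extension argument above takes care of; the uniformity of $c_1$ in $T$ — the actual content of the lemma — is then automatic, because the BIBS constant in~\eqref{eq:BIBO-ID} depends on nothing but $c_0$, which is itself independent of $T$.
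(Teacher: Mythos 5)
Your proof is correct and follows essentially the same route as the paper: bound the output uniformly via the funnel and the boundedness of $1/\phi$ and $y_{\rf}$, bound the internal state via the BIBS condition~\eqref{eq:BIBO-ID} with a constant depending only on the size of $B$ and the output bound, and take $K$ as a (closed) product of the two bounded sets. Your explicit extension of $y$ to the half-line by freezing its terminal value is exactly the device the paper uses elsewhere (and tacitly here), so no genuine difference remains.
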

\begin{proof}
    Define
    \[
        N_t:= \setdef
                {\eta(t;t^0,\eta^0,y)}
                {(y^0,\eta^0)\in B,\ y\in \cY^{\phi,y^0}_{y_{\rf}}([t^0,\infty))}
            ,\quad  t \ge t^0.
    \]
    By definition of $\cG$,   $\phi$ is strictly positive and $\inf_{t\geq 0} \phi(t) > 0$.
    Therefore, $1/\phi$ is bounded. Since clearly $y_{\rf}\in  W^{1,\infty}(\Rp,\R^{m})$ is bounded,
    every function $y\in\cY^{\phi,y^0}_{y_{\rf}}([t^0,\infty))$ is bounded by
    \[
        \SNorm{y}
        \leq \SNorm{y-y_{\rf}}+\SNorm{y_{\rf}}
        \leq \SNorm{\tfrac{1}{\phi}}+\SNorm{y_{\rf}}.
    \]
    Since $B$ is bounded it follows from the~BIBS
    condition~\eqref{eq:BIBO-ID} that  the set $N:=\bigcup_{t\geq t^0} N_t$ is also bounded. Furthermore, the set
    \[
        O:=\bigcup_{t\geq t^0}\setdef
                                        {y\in\R^m}
                                        {\phi(t)\Norm{y-y_{\rf}(t)} < 1}
    \]
    is bounded, too. Then the set $K:=\overline{O\times N}$ is compact and by definition of~$N$ and~$O$ we find that~\eqref{eq:OutPutIsInCompactSet} holds.
\end{proof}

The following result provides a number $M>0$ with  $\Controls \neq \emptyset$  for all initial values $x^0$ from any compact set which satisfies a condition similar to~\eqref{eq:OutPutIsInCompactSet}.

\begin{prop}\label{Prop:BoundM}
    Consider system~\eqref{eq:Sys} with $(f,g,h)\in\cN^{m}$. Let $\phi\in\cG$, $\ y_{\rf}\in
    W^{1,\infty}(\Rp,\R^{m})$, $T>0$, $t^0\in\Rp$, $x^0\in\cD_{t^0}^{\phi}$, and $\Phi:\R^n\to\R^n$
    be a diffeomorphism such that the coordinate transformation $\Phi(x)=(y,\eta)$ puts the
    system~\eqref{eq:Sys} into Byrnes-Isidori form~\eqref{eq:BIF}. Let $(y^0,\eta^0) = \Phi(x^0)$
    and $K\subset\R^{n}$ be a  compact set with
    \begin{equation}\label{eq:Prop:DefPropertyK}
        \fa y\in\cY^{\phi,y^0}_{y_{\rf}}([t^0,t^0+T])
        \fa t\in[t^0,t^0+T]:\quad
            (y(t),\eta(t;t^0,\eta^0,y))\in K.
    \end{equation}
     If
    \begin{equation}\label{eq:Prop:DefBoundM}
        M\geq
            G_{\rm max}
            \rbl
                P_{\rm max}+ \SNorm{\dd{t}\tfrac{1}{\phi}} + \SNorm{\dot y_{\rf}}
            \rbr,
    \end{equation}
    where, with $p(\cdot,\cdot)$ and $\Gamma(\cdot)$ as in~\eqref{eq:Gamma} and~\eqref{eq:BIF},
    \begin{align*}
        P_{\rm max} :=\max_{(y,\eta)\in K} \|p(y,\eta)\|,\quad    G_{\rm max} :=\max_{(y,\eta)\in K}
        \|\G(\Phi^{-1}(y,\eta))^{-1}\|,
    \end{align*}
    then $\Controls\neq \emptyset$.
\end{prop}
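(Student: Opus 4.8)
The plan is a ``feedback--linearisation'' construction in Byrnes--Isidori coordinates: prescribe an output trajectory that stays safely inside the funnel, solve the internal dynamics along it, and invert $\G$ to read off the (unique) input that produces it; then bound this input by the right-hand side of~\eqref{eq:Prop:DefBoundM}. Concretely, I would put $e^0:=y^0-y_{\rf}(t^0)$, so that $\rho:=\phi(t^0)\Norm{e^0}<1$ because $x^0\in\cD^{\phi}_{t^0}$, and take as candidate output
\[
    \bar y(t):=y_{\rf}(t)+\frac{\phi(t^0)}{\phi(t)}\,e^0,\qquad t\in[t^0,t^0+T],
\]
which ``freezes'' the normalised error: $\phi(t)\Norm{\bar y(t)-y_{\rf}(t)}=\rho<1$ for all $t$. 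Since $y_{\rf}\in W^{1,\infty}$ and $\phi\in\cG$ forces $1/\phi\in W^{1,\infty}$, we get $\bar y\in W^{1,\infty}([t^0,t^0+T],\R^m)$ with $\bar y(t^0)=y^0$, hence $\bar y\in\cY^{\phi,y^0}_{y_{\rf}}([t^0,t^0+T])$, and
\[
    \dot{\bar y}(t)=\dot y_{\rf}(t)+\phi(t^0)\,e^0\,\Big(\dd{t}\tfrac{1}{\phi}\Big)(t),
    \qquad
    \Norm{\dot{\bar y}(t)}\ \le\ \SNorm{\dot y_{\rf}}+\rho\,\SNorm{\dd{t}\tfrac{1}{\phi}}\ \text{ for a.e.\ }t .
\]

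Next I would let $\bar\eta(\cdot):=\eta(\cdot;t^0,\eta^0,\bar y)$ be the solution of the internal dynamics~\eqref{eq:zero_dyn} driven by the bounded signal $\bar y$ (global by the BIBS condition~\eqref{eq:BIBO-ID}). Hypothesis~\eqref{eq:Prop:DefPropertyK} then gives $(\bar y(t),\bar\eta(t))\in K$ for all $t\in[t^0,t^0+T]$, so $\Norm{p(\bar y(t),\bar\eta(t))}\le P_{\rm max}$ and $\Norm{\G(\Phi^{-1}(\bar y(t),\bar\eta(t)))^{-1}}\le G_{\rm max}$. I define the candidate control
\[
    \bar u(t):=\G\big(\Phi^{-1}(\bar y(t),\bar\eta(t))\big)^{-1}\Big(\dot{\bar y}(t)-p\big(\bar y(t),\bar\eta(t)\big)\Big),\qquad t\in[t^0,t^0+T].
\]
Since $\dot{\bar y}\in L^\infty$ and $p$, $\G$, $\Phi^{-1}$ and inversion on $\GL_m(\R)$ are continuous, $\bar u\in L^\infty([t^0,t^0+T],\R^m)$, and
\[
    \SNorm{\bar u}\ \le\ G_{\rm max}\big(\SNorm{\dot{\bar y}}+P_{\rm max}\big)
    \ \le\ G_{\rm max}\Big(P_{\rm max}+\rho\,\SNorm{\dd{t}\tfrac{1}{\phi}}+\SNorm{\dot y_{\rf}}\Big)
    \ \le\ M
\]
by $\rho<1$ and~\eqref{eq:Prop:DefBoundM}.

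It would then remain to verify that $\bar u\in\Controls{}$. By construction $(\bar y,\bar\eta)$ solves the Byrnes--Isidori system~\eqref{eq:BIF} on $[t^0,t^0+T]$ with input $\bar u$ and initial value $(y^0,\eta^0)=\Phi(x^0)$: \eqref{eq:zero_dyn} holds by definition of $\bar\eta$, and \eqref{eq:output_dyn} holds because $\bar u$ was defined precisely so that $p(\bar y,\bar\eta)+\G(\Phi^{-1}(\bar y,\bar\eta))\,\bar u=\dot{\bar y}$. Hence $\bar x(\cdot):=\Phi^{-1}(\bar y(\cdot),\bar\eta(\cdot))$ is an absolutely continuous solution of~\eqref{eq:Sys} on $[t^0,t^0+T]$ with $\bar x(t^0)=x^0$; by uniqueness of Carath\'eodory solutions it is the response $x(\cdot;t^0,x^0,\bar u)$, which therefore exists on the whole interval $[t^0,t^0+T]$. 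Since the output is the first block of the Byrnes--Isidori coordinates, $h(\bar x(t))=\bar y(t)$, so $\phi(t)\Norm{h(\bar x(t))-y_{\rf}(t)}=\rho<1$, i.e.\ $\bar x(t)\in\cD^{\phi}_t$ for all $t\in[t^0,t^0+T]$. Together with $\SNorm{\bar u}\le M$ and the characterisation of $\Controls{}$ in Theorem~\ref{Th:FiniteJImplFunnel}, this yields $\bar u\in\Controls{}$, so $\Controls{}\neq\emptyset$.

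Most of this is routine $W^{1,\infty}$/$L^\infty$ bookkeeping together with the single norm estimate above. The step I expect to require the most care is the \emph{consistency} between the two roles the trajectory plays: $\bar u$ is built out of $(\bar y,\bar\eta)$, and one must confirm that feeding $\bar u$ back into~\eqref{eq:Sys} regenerates precisely $\bar x=\Phi^{-1}(\bar y,\bar\eta)$ — this is where uniqueness of solutions and the global diffeomorphism property of $\Phi$ enter, and it is also what rules out a finite escape time before $t^0+T$. The only real idea behind the statement is that relative degree one plus invertibility of $\G$ makes the output exactly steerable, while the compact set $K$ (supplied, in the application, by Lemma~\ref{Lemma:ExistanceCompactSet}) uniformises the bounds on $p$ and $\G^{-1}$ over all admissible outputs, and hence over all $x^0$ in a bounded set and all horizons $T$.
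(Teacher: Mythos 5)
Your proof is correct and is essentially the paper's own argument: the paper applies exactly the same input, written as the output feedback $u(t)=\G\big(\Phi^{-1}(y(t),\eta(t))\big)^{-1}\big(-p(y(t),\eta(t))+\phi(t^0)e(t^0)\dot\psi(t)+\dot y_{\rf}(t)\big)$ with $\psi=1/\phi$, which generates precisely your trajectory $\bar y(t)=y_{\rf}(t)+\tfrac{\phi(t^0)}{\phi(t)}e^0$ and the same bound via $K$, the only organizational difference being that the paper argues through the closed-loop maximal solution and excludes finite escape by boundedness in $K$, whereas you prescribe $\bar y$ open-loop, solve the internal dynamics (global by BIBS), and identify the response by uniqueness of Carath\'eodory solutions --- an equivalent, if anything slightly cleaner, treatment of the escape-time issue. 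One small correction: do not invoke Theorem~\ref{Th:FiniteJImplFunnel} for the membership $\bar u\in\Controls$ (that theorem presupposes $\Controls\neq\emptyset$); the facts you have already verified --- existence of the response on $[t^0,t^0+T]$, $x(t;t^0,x^0,\bar u)\in\cD^{\phi}_t$ for all $t$, and the input bound --- are exactly the defining conditions in~\eqref{eq:DefUL2}, so membership follows directly from the definition (up to the strict inequality $\SNorm{u}<M$ there versus the bound $\SNorm{\bar u}\le M$, a discrepancy already present in the paper's own proof).
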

\begin{proof}
    \emph{Step 1}:
    We first show the existence of $M>0$ satisfying~\eqref{eq:Prop:DefBoundM}. Note that $p$ and
    $\G$ from~\eqref{eq:Gamma} and~\eqref{eq:BIF} are continuous and $\G$ is pointwise invertible.
    Therefore, $P_{\rm max}$ and $G_{\rm max}$ are well-defined. Furthermore, the essential supremum
    $\SNorm{\dot y_{\rf}}$ is finite, because $y_{\rf}\in W^{1,\infty}(\Rp,\R^m)$. Since $\phi$ is
    an element of $W^{1,\infty}(\Rp,\R)$, always positive and $\inf_{t\geq0} \phi(t)>0$ the
    reciprocal~$\psi := 1/\phi$  is an element of $W^{1,\infty}(\Rp,\R)$, too, and in particular
    $\dot \psi$ is bounded. Thus, $M$ can be chosen as in~\eqref{eq:Prop:DefBoundM}.
\\

\noindent
\emph{Step 2}:
We construct a control function~$u$ and show   that $u\in \Controls$.
To this end, define $e(t):=y(t)-y_{\rf}(t)$ and observe that, since
$x^0\in\cD_{t^0}^{\phi}$, we have $\phi(t^0)\Norm{e(t^0)}<1$.
The application of the output feedback
\[
    u(t) = \G\big(\Phi^{-1}(y(t),\eta(t))\big)^{-1}
                \rbl
                    - p(y(t),\eta(t))
                    + \phi(t^0)e(t^0)\dot \psi(t) +\dot{y}_{\rf}(t)
                \rbr
\]
to the system~\eqref{eq:BIF} leads to a closed-loop system. If this initial value problem is
considered on the interval~$[t^0,t^0+T]$, then there exists a unique maximal solution
$(y,\eta):[t^0,\omega)\to \R^n$  with $\omega\in (t^0,t^0+T]$ and if $(y,\eta)$ is bounded, then $\omega =  t^0+T$, cf.~\cite[\S~10,~Thm.~XX]{Walt98}. Then we find for all $t\in[t^0,\omega)$ that
\begin{align*}
    \Norm{e(t)}
        &= \Norm{
            \int_{t^0}^{t}\dot{y}(s) - \dot{y}_{\rf}(s)\d s
            + e(t^0)}\\
        &= \Norm{
            \int_{t^0}^{t}p(y(s),\eta(s)) + \G\big(\Phi^{-1}(y(s),\eta(s))\big) u(s) - \dot{y}_{\rf}(s)\d s
            + e(t^0)}\\
        &= \Norm{
            \int_{t^0}^{t}
                \phi(t^0)e(t^0) \dot \psi(s)
            \d s
            + e(t^0)}
        = \Norm{
            \phi(t^0)e(t^0)\rbl\psi(t)
            -\psi(t^0)\rbr
           + e(t^0)}\\
        &= \underbrace{\phi(t^0)\Norm{e(t^0)}}_{<1}
           \psi(t)
        < \psi(t).
\end{align*}
This means, the tracking error~$e$ remains within the
funnel, i.e., $(y(t),\eta(t))\in \cD_{t}^\phi$ for all $t\in[t^0,\omega)$. Thus, $y$ is uniformly bounded by
\[
    \SNorm{y}
    \leq \SNorm{y-y_{\rf}}+\SNorm{y_{\rf}}
    \leq \SNorm{\psi}+\SNorm{y_{\rf}}.
\]
Since the error remains within the funnel, the output $y$, defined on $[t^0,\omega)$,
can be extended to an element $\tilde y \in \cY^{\phi,y^0}_{y_{\rf}}([t^0,t^0+T])$ and so by assumption~\eqref{eq:Prop:DefPropertyK} we have
\[
    \fa t\in[t^0,\omega):\quad (y(t),\eta(t;t^0,\eta^0,y))\in K.
\]
Therefore, $(y,\eta)$ is bounded and hence $\omega =  t^0+T$ and, with the same arguments, $(y,\eta)$ has a continuous extension to $[t^0,t^0+T]$. Furthermore, by definition of~$u$ it is clear that $\|u\|_\infty\le M$ and hence $u\in \Controls$, which completes the proof.
\end{proof}

The result of Proposition~\ref{Prop:BoundM} essentially guarantees initial feasibility of
Algorithm~\ref{Algo:MPCFunnelCost} for all initial values from a given bounded set, which we will
summarize in the following theorem. To further obtain recursive feasibility we need to ensure that,
after the application of a control $u$ from $\Controls$ over an interval $[t^0,t]$, the set of
controls corresponding to the new state value, namely $\PhiTControls{t}{x(t;t^0,x^0,u)}$, is non-empty as well.

\begin{theorem}\label{Theorem:ExistanceBoundM}
    Consider system~\eqref{eq:Sys} with $(f,g,h)\in\cN^{m}$. Let $\phi\in\cG$, $\ y_{\rf}\in
    W^{1,\infty}(\Rp,\R^{m})$, $t^0\in\Rp$, and $B\sub\cD^{\phi}_{t^0}$ be a bounded set.
    Then, there exists $M>0$ such that
    \begin{align}\label{eq:ExistanceBoundMInfinite}
         \fa x^0\in B
         \fa T>0: \quad
            \Controls\neq \emptyset
    \end{align}
    and, furthermore,
    \begin{align}\label{eq:ExistanceBoundMRecursive}
        \fa x^0\in B
        \fa T_1,T_2>0
        \fa u\in \PhiControls{T_1}{t^0}{x^0}
        \fa t\in[t^0,t^0+T_1]:\quad
        \PhiControls{T_2}{t}{x(t;t^0,x^0,u)}\neq \emptyset.
    \end{align}
\end{theorem}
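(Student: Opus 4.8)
The plan is to derive both assertions from Lemma~\ref{Lemma:ExistanceCompactSet} together with Proposition~\ref{Prop:BoundM}. Fix a diffeomorphism $\Phi$ as in~\eqref{eq:BIF}. Since $B$ is bounded, $\Phi(B)$ is bounded as well (it lies in the compact set $\Phi(\overline B)$), so Lemma~\ref{Lemma:ExistanceCompactSet} applied to $\Phi(B)$ produces a compact $K\subset\R^n$ such that for every $T>0$, every $(y^0,\eta^0)\in\Phi(B)$, every $y\in\cY^{\phi,y^0}_{y_{\rf}}([t^0,t^0+T])$ and every $t$ in that interval one has $(y(t),\eta(t;t^0,\eta^0,y))\in K$. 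Forming $P_{\rm max},G_{\rm max}$ over this $K$ (both finite by continuity of $p,\G$ and pointwise invertibility of $\G$) and choosing $M$ as in~\eqref{eq:Prop:DefBoundM} — a choice depending only on $K$, hence neither on $x^0\in B$ nor on $T$ — the set $K$ is, for each fixed $x^0\in B$ with $(y^0,\eta^0)=\Phi(x^0)$, exactly a compact set meeting hypothesis~\eqref{eq:Prop:DefPropertyK} of Proposition~\ref{Prop:BoundM}. That proposition then yields $\cU^\phi_T(M,t^0,x^0)\neq\emptyset$ for all $x^0\in B$ and all $T>0$, which is~\eqref{eq:ExistanceBoundMInfinite}.

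For~\eqref{eq:ExistanceBoundMRecursive} I would first observe that any $u\in\cU^\phi_{T_1}(M,t^0,x^0)$ with $x^0\in B$ keeps the output in the funnel on $[t^0,t^0+T_1]$, so by the previous step $\Phi\big(x(t;t^0,x^0,u)\big)=(y(t),\eta(t;t^0,\eta^0,y))\in K$, and by the very definition of $\cU^\phi_{T_1}$ also $x(t;t^0,x^0,u)\in\cD^\phi_t$, for all $t\in[t^0,t^0+T_1]$; crucially, neither conclusion depends on the size of $M$. It therefore suffices to guarantee $\cU^\phi_{T_2}(M,t,\hat x)\neq\emptyset$ for every base time $t\ge t^0$ and every $\hat x\in\cD^\phi_t$ with $\Phi(\hat x)\in K$. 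I would obtain this by applying Lemma~\ref{Lemma:ExistanceCompactSet} a second time, now to the bounded set $K$ in the role of the initial set, getting a compact $K'\supseteq K$ with the same absorbing property, and then invoking Proposition~\ref{Prop:BoundM} with base time $t$, initial value $\hat x$, $(y^0,\eta^0)=\Phi(\hat x)\in K$ and compact set $K'$: this yields $\cU^\phi_{T_2}(M,t,\hat x)\neq\emptyset$ provided $M\ge G_{\rm max}'\big(P_{\rm max}'+\SNorm{\dd{t}\tfrac{1}{\phi}}+\SNorm{\dot y_{\rf}}\big)$ with $P_{\rm max}',G_{\rm max}'$ formed over $K'$. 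As $K\subseteq K'$, this threshold dominates the one needed for the initial step, so fixing $M$ to equal it establishes~\eqref{eq:ExistanceBoundMInfinite} and~\eqref{eq:ExistanceBoundMRecursive} simultaneously.

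The one point I expect to require care is the base-time uniformity of the compact set in the second application of Lemma~\ref{Lemma:ExistanceCompactSet}: the lemma is stated for a fixed initial time, whereas here the ``new'' initial time $t$ ranges over all of $[t^0,\infty)$. I would resolve this by noting — directly from the proof of Lemma~\ref{Lemma:ExistanceCompactSet} — that the sets $O$ and $N$ there (and hence $K'$) are bounded uniformly in the base time, since they rely only on the boundedness of $1/\phi$ and $y_{\rf}$ on all of $\Rp$ and on the uniform-in-$t^0$ quantifier structure of the BIBS condition~\eqref{eq:BIBO-ID}. Everything else is routine bookkeeping: boundedness of $\Phi(B)$ and of $K$, finiteness of $P_{\rm max},G_{\rm max}$, and monotonicity of the threshold in~\eqref{eq:Prop:DefBoundM} under enlarging the compact set.
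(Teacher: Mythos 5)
Your proof is correct, and the first half (initial feasibility) coincides with the paper's argument: apply Lemma~\ref{Lemma:ExistanceCompactSet} to the bounded set $\Phi(B)$, form $P_{\rm max},G_{\rm max}$ over the resulting compact $K$, and invoke Proposition~\ref{Prop:BoundM} with an $M$ as in~\eqref{eq:Prop:DefBoundM}, which depends on neither $x^0$ nor $T$. Where you genuinely diverge is the recursive-feasibility half. The paper never applies the lemma a second time: it shows that the \emph{original} compact set $K$ already satisfies hypothesis~\eqref{eq:Prop:DefPropertyK} at the later base time $\widehat t$, by concatenating any funnel-respecting output $\tilde y\in\cY^{\phi,\widehat y}_{y_{\rf}}([\widehat t,\widehat t+T_2])$ with the actually realized output $y$ on $[t^0,\widehat t]$ to obtain $\bar y\in\cY^{\phi,y^0}_{y_{\rf}}([t^0,\widehat t+T_2])$, and using the cocycle identity $\eta(t;\widehat t,\widehat\eta,\tilde y)=\eta(t;t^0,\eta^0,\bar y)$; hence the same $K$ and the same $M$ work verbatim. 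You instead discard the history of $\widehat x$, retain only $\Phi(\widehat x)\in K$ and $\widehat x\in\cD^\phi_t$, re-apply the lemma with $K$ as the new set of initial data, and enlarge to $K'$ and to a correspondingly larger $M$. This is sound, but it requires exactly the strengthening you flag: a base-time-uniform version of Lemma~\ref{Lemma:ExistanceCompactSet}, which you correctly justify from its proof (the bounds on $O$ and $N$ use only the global bounds on $1/\phi$, $y_{\rf}$ and the fact that in~\eqref{eq:BIBO-ID} the constant $c_1$ is quantified before $t^0$, hence uniform in the initial time); the minor point that $K'\supseteq K$ is not automatic is harmless, since one may pass to $K\cup K'$ or take the maximum of the two thresholds. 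The trade-off: your route proves the slightly stronger and more modular statement that feasibility holds from \emph{every} state in $\cD^\phi_t$ with $\Phi$-image in $K$, independently of how it was reached, at the cost of a possibly larger $M$ and of reworking the lemma; the paper's concatenation argument is tighter and reuses $K$ and $M$ unchanged, but only covers states actually reachable by admissible controls from $B$ — which is all the theorem needs.
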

\begin{proof}
    Let $\Phi:\R^n\to\R^n$ be a diffeomorphism such that the coordinate transformation $\Phi(x)=(y,\eta)$ puts
    the system~\eqref{eq:Sys} into Byrnes-Isidori form~\eqref{eq:BIF}. Fix $x^0\in B$ and set $(y^0,\eta^0):=\Phi(x^0)$. According to Lemma~\ref{Lemma:ExistanceCompactSet} there exists a
    compact set $K$ such that~\eqref{eq:OutPutIsInCompactSet} holds. In particular, $K$ satisfies~\eqref{eq:Prop:DefPropertyK} for every $T>0$. Therefore, Proposition~\ref{Prop:BoundM} yields that there exists $M>0$, independent of $x^0$, such that $\Controls\neq \emptyset$ for all $T>0$, which shows~\eqref{eq:ExistanceBoundMInfinite}.

    If, for any $T_1>0$ an arbitrary but fixed control function $u\in \PhiControls{T_1}{t^0}{x^0}$ is
    applied to the system~\eqref{eq:Sys}, then  the output $y$ of the system (i.e., $y(\cdot) := h(x(\cdot;t^0,x^0,u))$) evolves within the funnel and
    is therefore an element of  $\cY^{\phi,y^0}_{y_{\rf}}([t^0,t^0+T_1])$. By~\eqref{eq:OutPutIsInCompactSet},
    this implies $\Phi(x(t;t^0,x^0,u))\in K$ for all $t\in[t^0,t^0+T_1]$. If, for any
    $\widehat t\in[t^0,t^0+T_1]$, the system is considered on the interval $[\widehat t,\widehat
    t+T_2]$ with $T_2>0$ and the current state $x(\widehat t;t^0,x^0,u)$ of the system as initial
    value, then  the prerequisites for Proposition~\ref{Prop:BoundM} are still met on the interval
    $[\widehat t,\widehat t+T_2]$, i.e., $K$ satisfies~\eqref{eq:Prop:DefPropertyK} in the sense
    \[
        \fa \tilde y \in \cY^{\phi,\widehat y}_{y_{\rf}}([\widehat t,\widehat t+T_2]) \fa t\in [\widehat t,\widehat t+T_2]:\quad (\tilde y(t), \eta(t;\widehat t,\widehat \eta, \tilde y))\in K,
    \]
    where $(\widehat y,\widehat \eta) := \Phi(x(\widehat t;t^0,x^0,u))\in K$. To see this, observe that for any
    $\tilde y \in \cY^{\phi,\widehat y}_{y_{\rf}}([\widehat t,\widehat t+T_2])$ there exists $\bar{y} \in
    \cY^{\phi,y^0}_{y_{\rf}}([t^0,\widehat t+T_2])$ with $\bar{y}|_{[\widehat t,\widehat t+T_2]} = \tilde y$ and
    $\bar{y}|_{[t^0,\widehat t]} = y$ ($\bar{y}$ is continuous since $y(\widehat t) = \widehat y$) and we have
    $\eta(t;t^0,\eta^0,\bar{y}) = \eta(t;\widehat t,\widehat \eta, \tilde y)$ for $t\in [\widehat t,\widehat t+T_2]$,
    thus the assertion follows from~\eqref{eq:Prop:DefPropertyK}. Therefore,
    Proposition~\ref{Prop:BoundM} can again be applied and yields $\PhiControls{T_2}{\widehat
    t}{x(\widehat t;t^0,x^0,u)}\neq \emptyset$, which completes the proof.
\end{proof}

\begin{example}
    We revisit Example~\ref{Example:LinearSystem} and calculate a number~$M>0$ satisfying~\eqref{eq:ExistanceBoundMInfinite} and~\eqref{eq:ExistanceBoundMRecursive} to illustrate Theorem~\ref{Theorem:ExistanceBoundM} for the linear case. Consider the system~\eqref{eq:LinearSysByrnesIsidori}
    in Byrnes-Isidori form  with $(A_1, A_2, A_3, A_4)\in
           \R^{m \times m} \times \R^{m \times (n-m) } \times
           \R^{(n-m) \times m} \times \R^{(n-m)\times (n-m)}$ and $t^0\in\Rp$.
    Let $\phi\in\cG$, $y_{\rf}\in W^{1,\infty}(\Rp,\R^{m})$ and define $\psi := 1/\phi$. Further, assume that $A_4$ is Hurwitz,  i.e., all of its eigenvalues have a negative real part.
    Then  there exist $\alpha > 0$, $\beta \geq 1$ such that
    \[
        \fa x\in\R^{n-m}\fa \text{$t \geq t^0$}:\quad
            \Norm{\me^{A_4 (t-t^0)} x}\leq\beta \me^{-\alpha (t-t^0)}\Norm{x}.
    \]
    Let $N\subset\R^{n-m}$ be an arbitrary, but fixed bounded set. We show that for
    \[
        B:=\setdef{y\in\R^m}{\phi(t^0)\Norm{y-y_{\rf}(t^0)}<1}\times N \subseteq \cD^{\phi}_{t^0}
    \]
    and
    \begin{align*}
        M:= \Norm{\G^{-1}}
            \rbl
                \rbl
                    \Norm{A_1} + \frac{\beta}{\alpha} \Norm{A_2}\Norm{A_3}
                \rbr
                \rbl
                    \SNorm{\psi} + \SNorm{y_{\rf}}
                \rbr
                +\beta \Norm{A_2} \sup_{\eta^0\in N}\Norm{\eta_0}
                +\SNorm{\dot \psi}\!\! + \SNorm{\dot{y}_{\rf}}
            \rbr
    \end{align*}
    the conditions~\eqref{eq:ExistanceBoundMInfinite} and~\eqref{eq:ExistanceBoundMRecursive} are
    satisfied. One can see that $M$ is chosen according to inequality~\eqref{eq:Prop:DefBoundM} with
    a more accurate estimate for $P_{\max}$.

    To this end, let $T>0$ and $(y^0,\eta^0)\in B$ be arbitrary and denote
    $e(t):=y(t)-y_{\rf}(t)$. Since
    the initial value is inside the funnel, we have $\phi(t^0)\Norm{e(t^0)}<1$.
    If the output feedback
    \[
        u(t):=\G^{-1}
            \rbl
                -A_1 y(t) - A_2 \eta(t)
                + \phi(t^0)e(t^0)\dot \psi(t) +\dot{y}_{\rf}(t)
            \rbr
    \]
    is applied to the system~\eqref{eq:LinearSysByrnesIsidori}, then clearly a unique global
    solution $(y,\eta):[t^0,\infty)\to\R^n$ exists and, as in the proof of
    Proposition~\ref{Prop:BoundM}, we may calculate that $ \Norm{e(t)} < \psi(t)$ for all  $t\geq
    t^0$. As a consequence $(y(t),\eta(t))\in \cD_{t}^\phi$ for all $t\ge t^0$, and $y$ is uniformly
    bounded by
    \[
        \SNorm{y}
        \leq \SNorm{y-y_{\rf}}+\SNorm{y_{\rf}}
        \leq \SNorm{\psi}+\SNorm{y_{\rf}}.
    \]
    Therefore, for $t\geq t^0$ we have
    \begin{align*}
        \Norm{\eta(t;t^0,\eta^0,y)}
            &=      \Norm {\me^{A_4(t-t^0)}\eta^0 + \int_{t^0}^{t}\me^{A_4(t-s)}A_3y(s)\d s}\\
            &\leq   \Norm {\me^{A_4(t-t^0)}}\Norm{\eta^0}
                    + \int_{t^0}^{t}\Norm{\me^{A_4(t-s)}}\Norm{A_3}\Norm{y(s)}\d s\\
            &\leq   \beta \me^{-\alpha(t-t^0)}\sup_{\eta^0\in N}\Norm{\eta^0}
                    + \int_{t^0}^{t}\beta \me^{-\alpha(t-t^0)}
                    \Norm{A_3}\rbl\SNorm{\psi}+\SNorm{y_{\rf}}\rbr\d s\\
            &\leq   \beta \sup_{\eta^0\in N}\Norm{\eta^0}
                    +\Norm{A_3}\frac{\beta}{\alpha}\rbl\SNorm{\psi}+\SNorm{y_{\rf}}\rbr.
    \end{align*}
    As a consequence we see that $\|u\|_\infty\le M$ and thus
    \[
        \fa T>0:\ u\in \PhiTControls{t^0}{(y^0,\eta^0)}\neq\emptyset
    \]
    and~\eqref{eq:ExistanceBoundMInfinite} is satisfied.
    If any $u\in \PhiTControls{t^0}{(y^0,\eta^0)}$ is applied to the
    system~\eqref{eq:LinearSysByrnesIsidori} and the system is then considered for any
    $\widehat{t}\in[t^0,t^0+T]$ and $\widehat{T}>0$ on the interval $[\widehat{t},\widehat{t}+\widehat{T}]$, then
    $\phi(\widehat{t})\Norm{e(\widehat{t})}<1$ and it can be similarly shown that the feedback control
    \[
        \widehat{u}(t):=\G^{-1}
            \rbl
                -A_1 y(t) - A_2 \eta(t)
                + \phi(\widehat{t})e(\widehat{t})\dot \psi(t) +\dot{y}_{\rf}(t)
            \rbr
    \]
    leads to an element of $\PhiControls{\widehat{T}}{\widehat{t}}{x(\widehat{t};t^0,x^0,u)}$, by
    which $M$ satisfies~\eqref{eq:ExistanceBoundMRecursive}. Here we like to emphasize that the
    estimate for $\eta$ needs to be carried out in terms of $t^0$, i.e., for $\widehat y(t):=
    h(x(t;\widehat t, x(\widehat t;t^0,x^0,u),\widehat u))$ denoting the output on
    $[\widehat{t},\widehat{t}+\widehat{T}]$ and $\widehat \eta = \eta(\widehat t;t^0,x^0,u)$ we have
    that 
    \begin{align*} 
        \eta(t;\widehat t,\widehat \eta,\widehat y) &= \me^{A_4(t-\widehat
    t)}\widehat \eta + \int_{\widehat t}^{t}\me^{A_4(t-s)}A_3\widehat y(s)\d s \\ &=
    \me^{A_4(t-t^0)}\eta^0 + \int_{t^0}^{\widehat t}\me^{A_4(t-s)}A_3y(s)\d s +  \int_{\widehat
    t}^{t}\me^{A_4(t-s)}A_3\widehat y(s)\d s
    \end{align*}
    and hence we obtain the same bound for
    $\Norm{\eta(t;\widehat t,\widehat \eta,\widehat y)}$ as for $\Norm{\eta(t;t^0,\eta^0,y)}$.
\end{example}


We are now in the position to summarize our results by showing initial and recursive feasibility of
the FMPC Algorithm~\ref{Algo:MPCFunnelCost} and proving Theorem~\ref{Th:FunnelMPCRelDeg1}.

\begin{proof}[Proof of Theorem~\ref{Th:FunnelMPCRelDeg1}]
    \emph{Step 1}:
    According to Theorem~\ref{Theorem:ExistanceBoundM}, there exists $M>0$
    satisfying~\eqref{eq:ExistanceBoundMInfinite} and~\eqref{eq:ExistanceBoundMRecursive}.
    Let $x^0\in B$ be an arbitrary initial value and $T\ge \delta$.
    Since $\Controls\neq \emptyset$ by~\eqref{eq:ExistanceBoundMInfinite},
 Theorem~\ref{Th:Funnel_cost_l2} yields the existence of some
 $u^{\star}\in \Controls$ such that~$J^{\phi}_T$ has a minimum, that is
    \[
        J^{\phi}_T(u^{\star};t^0,x^0)=\min_{
                \substack
                {
                    u\in L^{\infty}([t^0, t^0+T],\R^{m}),\\
                    \SNorm{u}  \leq M
                }
            }
             J^{\phi}_T(u;t^0,x^0),
    \]
    i.e., $u^{\star}$ is a solution of~\eqref{eq:FunnelMpcOCP} for $\widehat t = t^0$ and hence the FMPC
Algorithm~\ref{Algo:MPCFunnelCost} is initially  feasible. Furthermore, by $u^{\star}\in \Controls$
we have that the error satisfies $\Norm{e(t)} \le \phi(t)^{-1}$ for all $t\in [t^0,t^0+\delta)$.
\\

\noindent
    \emph{Step 2}:
Let $\widehat t\in t^0+\delta\N_0$ be such that the OCP~\eqref{eq:FunnelMpcOCP}
has a solution $u^{\star}\in \PhiTControls{\widehat t}{\widehat x}$ defined on $[\widehat t,\widehat
    t+ T]$ and let $x:[t^0,\widehat t+\delta)\to\R^n$ be the solution of~\eqref{eq:Sys}  under the FMPC feedback~\eqref{eq:FMPC-fb}.
    We now show that the OCP also has a solution at the next time step $\widehat t+\delta$. Since
    $u^{\star}$ is defined on $[\widehat t,\widehat t+ T]$, the solution $x$ has a continuous extension to $[\widehat t,\widehat
    t+ T]$ and, in particular, $\widehat x := x(\widehat t+\delta)$ is well defined. With $u_{\rm FMPC}(t) =
    \mu(t,x(\tilde t))$ for $t\in [\tilde t,\tilde t+\delta)$, $\tilde t\in t^0+\delta\N$, $\tilde
    t\le \widehat t$, the corresponding control input $u_{\rm FMPC}$ is well defined on $[t^0,\widehat
    t+\delta)$ and we have $x(t) = x(t;t^0,x^0,u_{\rm FMPC})$ for all $t\in[t^0,\widehat t+\delta)$.
    Then~\eqref{eq:ExistanceBoundMRecursive} gives that $\PhiTControls{\widehat t+\delta}{\widehat x}\neq
    \emptyset$ and by  Theorem~\ref{Th:Funnel_cost_l2} there exists $\tilde u\in \PhiTControls{\widehat
t+\delta}{\widehat x}$ such that~$J^{\phi}_T$ has a minimum, that is
    \[
        J^{\phi}_T(\tilde u;\widehat t+\delta,\widehat x)=\min_{
                \substack
                {
                    u\in L^{\infty}([\widehat t+\delta, \widehat t+\delta+T],\R^{m}),\\
                    \SNorm{u}  \leq M
                }
            }
             J^{\phi}_T(u;\widehat t+\delta,\widehat x),
    \]
    hence $\tilde u$ is a solution of~\eqref{eq:FunnelMpcOCP} on $[\widehat t+\delta, \widehat t+\delta+T]$.
    Under the feedback~\eqref{eq:FMPC-fb}, the solution~$x$ can thus be extended to $[t^0,\widehat
    t+2\delta)$ and, by definition of $\PhiTControls{\widehat t+\delta}{\widehat x}$, the corresponding tracking
    error $e$ satisfies $\Norm{e(t)} \le \phi(t)^{-1}$ for all $t\in [t^0,\widehat t+2\delta)$. This
    shows that the FMPC Algorithm~\ref{Algo:MPCFunnelCost} is recursively feasible.\\

\noindent
    \emph{Step 3}: By Step~2 we have shown that system~\eqref{eq:Sys} under the FMPC
    feedback~\eqref{eq:FMPC-fb} has a global solution $x:[t^0,\infty)\to\R^n$ and, since $u_{\rm
    FMPC}|_{[\widehat t,\widehat t+\delta]} \in \PhiControls{\delta}{\widehat t}{x(\widehat t)}$ for all $\widehat t\in
    t^0+\delta\N$, we have that~\ref{th:item:BoundedInput} and~\ref{th:item:ErrorInFunnel} hold.
\end{proof}





\section{Conclusion}\label{Sec:Conclusion}
In the present paper we have shown that  the FMPC scheme proposed in~\cite{berger2019learningbased},
which solves the problem of tracking a reference signal within a prescribed performance funnel, is
initially and recursively feasible for an arbitrary finite prediction horizon when applied to
nonlinear multi-input multi-output  systems with relative degree one and stable internal dynamics
(in the sense of a BIBS condition). By exploiting concepts from funnel control and using a new
``funnel-like'' stage cost function, feasibility is achieved without any need for additional
terminal or explicit output constraints while also being restricted to (a priori) bounded control
values. In particular, we have shown that the additional output constraints in the OCP of FMPC
considered in~\cite{berger2019learningbased} are not required to infer the feasibility results. We
have illustrated the  application of the FMPC scheme by a simulation not only of relative degree one
systems~-- for which feasibility is proved so far~-- but also of systems with higher relative
degree. The simulations  show promising preliminary results for  this  case, too. It  is a subject
of future research  to  show  that FMPC  is in fact applicable to a larger class of nonlinear
systems with  stable internal dynamics and higher relative degree.


\bibliographystyle{plain}
\footnotesize
\bibliography{\References}
\end{document}